\def\XXint#1#2#3{{\setbox0=\hbox{$#1{#2#3}{\int}$}
    \vcenter{\hbox{$#2#3$}}\kern-.5\wd0}}
\def\({\left(}
\def\){\right)}
\def\I{{\mathcal{I}_{\mu}}}
\def\RR{{\mathbb{R}}}
\newcommand{\eps}{\varepsilon}
\newcommand{\R}{\mathbb R}
\newcommand{\be}{\begin{equation}}
\newcommand{\ee}{\end{equation}}
\newcommand{\bea}{\begin{eqnarray}}
\newcommand{\eea}{\end{eqnarray}}
\newcommand{\beann}{\begin{eqnarray*}}
\newcommand{\eeann}{\end{eqnarray*}}
\def\e{\varepsilon}
\def\p{\partial}
\newcommand{\abs}[1]{\left\vert{#1}\right\vert}
\newcommand{\norm}[1]{\left\Vert{#1}\right\Vert}
\newtheorem{theorem}{Theorem}
\newtheorem{lemma}{Lemma}[section]
\newtheorem{remark}{Remark}[section]
\numberwithin{equation}{section}
\newenvironment{proof1}{
    \noindent {\em Proof }}{\hfill$\Box$}
\begin{document}

\title[Domain walls in the harmonic potential]{\bf Domain walls in the coupled Gross--Pitaevskii equations with the harmonic potential}

\author{Andres Contreras}
\address[A. Contreras]{Department of Mathematical Sciences, New Mexico State University, Las Cruces, New Mexico, USA}
\email{acontre@nmsu.edu}

\author{Dmitry E. Pelinovsky}
\address[D.E. Pelinovsky]{Department of Mathematics and Statistics, McMaster University,	Hamilton, Ontario, Canada, L8S 4K1}
\email{dmpeli@math.mcmaster.ca}

\author{Valeriy Slastikov}
\address[V. Slastikov]{School of Mathematics, University of Bristol, Bristol BS8 1UG, UK}
\email{valeriy.slastikov@bristol.ac.uk}

\date{\today}
\maketitle

\begin{abstract} 
	We study the existence and variational characterization of steady states in a coupled system of Gross--Pitaevskii equations modeling two-component Bose-Einstein condensates with the magnetic field trapping. The limit with no trapping has been the subject of recent works where domain walls have been constructed and several properties, including their orbital stability have been derived. Here we focus on the full model with the harmonic trapping potential and characterize minimizers according to the value of the coupling parameter $\gamma$. We first establish a rigorous connection between the two problems in the Thomas-Fermi limit via $\Gamma$-convergence. Then, we identify the ranges of $\gamma$ for which either the symmetric states $(\gamma < 1)$ or the uncoupled states $(\gamma > 1)$ are minimizers. Domain walls arise as minimizers in a subspace of the energy space with a certain symmetry for some $\gamma > 1$. We study bifurcation of the domain walls and furthermore give numerical illustrations of our results.
\end{abstract}

\section{Introduction}

Mixtures of Bose-Einstein condensates exhibit a wealth of phenomena studied extensively both in physics \cite{AoCh1998, Barankov2002,Han2012,NCK,PaBh2017, SaZuDa2011} and mathematics \cite{AfRo2020,AfRo2015,AfSa2020,GoRo2015,GoMe2017}. 
Topological defects such as domain walls and vortices appear in the segregation problems common in soft condensed matter   \cite{DrMaZe2011,Kasamatsu2013,LaKeTu2010-11,Malomed,Takeuch2011,Tripp2000}. Variational characterization of such defects is of great interest in mathematical physics \cite{AS,ABCP,Farina}. Related questions of their  stability in the time evolution play an important role in the understanding of the long time behavior of the underlying systems of dispersive equations \cite{CPP18,IaLe2014,NgWa2016}. 

Binary Bose-Einstein condensates trapped by a magnetic field are modeled by a coupled system of the Gross--Pitaevskii equations with a harmonic potential. A coupling parameter $\gamma$ determines the degree of repulsion between the two species. According to whether $\gamma$ is small or large, minimizers will exhibit very different qualitative behavior. In particular, three different types of steady states can be identified: \textit{uncoupled states}, \textit{symmetric states} and \textit{domain walls} (see Figure 11 in \cite{NCK}). The main goal of this work is to provide a variational characterization of these steady states. We study and determine regimes of existence and bifurcations of the domain walls in the presence of the harmonic confinement. We also study rigorously how these domain walls are related to those in the limiting problem without the trapping potential obtained in \cite{ABCP}. 

We consider the domain walls in the one-dimensional settings (see \cite{Malomed-review} for the review of domain walls in many physical settings).
Domain walls also arise in the two-dimensional and multi-dimensional geometry
where they are affected by the harmonic trapping and the boundary conditions \cite{DW-2D,DW-2D-Kevr}. Mathematical study of domain walls beyond the space 
of one dimension is opened for future work. 

The remainder of this section is organized as follows. We describe the coupled Gross--Pitaevskii equations and the associated energy with and without the harmonic potential in Section 1.1, where we also introduce the properties of the ground state of the scalar Gross--Pitaevskii equation and the Thomas--Fermi limit. Section 1.2 gives definitions of the uncoupled, symmetric, and domain wall states. Main results are described in Section 1.3. Numerical approximations of domain walls are reported in Section 1.4. Finally, Section 1.5 gives organization of the proofs of the main results.

\subsection{The Thomas-Fermi limit}

Let $\e>0$ be a small parameter for the semi-classical (Thomas--Fermi) limit. 
We consider a real-valued solution $\Psi = (\psi_1,\psi_2)$ to the following coupled system of Gross--Pitaevskii equations with the harmonic potential 
and the cubic nonlinear terms:
\be\label{DWHPVR}
\left.
\begin{matrix} -\e^2 \psi_1''(x) + x^2 \psi_1(x) + \(\psi_1(x)^2 + \gamma \psi_2(x)^2 - 1 \)\psi_1(x) = 0, \\
-\e^2 \psi_2''(x) + x^2 \psi_2(x) + \(  \gamma \psi_1(x) ^2 + \psi_2(x)^2 -1 \)\psi_2(x) = 0,
\end{matrix} \right\}\quad x\in\mathbb{R},
\ee
where $\gamma>0$ is the coupling parameter. The Thomas-Fermi limit corresponds to the asymptotic regime with $\e\to 0$.

System (\ref{DWHPVR}) is the Euler--Lagrange equation
for the following energy functional 
\begin{align}
\nonumber 
G_\e(\Psi) & = \frac{1}{2}\int_{\RR} \left[ \e^2 (\psi_1')^2 + \e^2 
(\psi_2')^2 + (x^2-1) (\psi_1^2 + \psi_2^2) \right.\\
& \qquad  + \frac{1}{2} (\psi_1^2 + \psi_2^2)^2 +
\left. (\gamma-1) \psi_1^2 \psi_2^2 \right] dx
\label{GPU}
\end{align}
for $\Psi = (\psi_1,\psi_2)$ defined in the energy space $\mathcal{E} := H^1(\RR;\mathbb{R}^2) \cap L^{2,1}(\RR;\mathbb{R}^2)$, where $L^{2,1}(\RR) := \{ f \in L^2(\RR) : \; |x| f \in L^2(\RR)\}$.

\begin{figure}[htpb!]
	\centering
	\includegraphics[width=8.5cm,height=6.5cm]{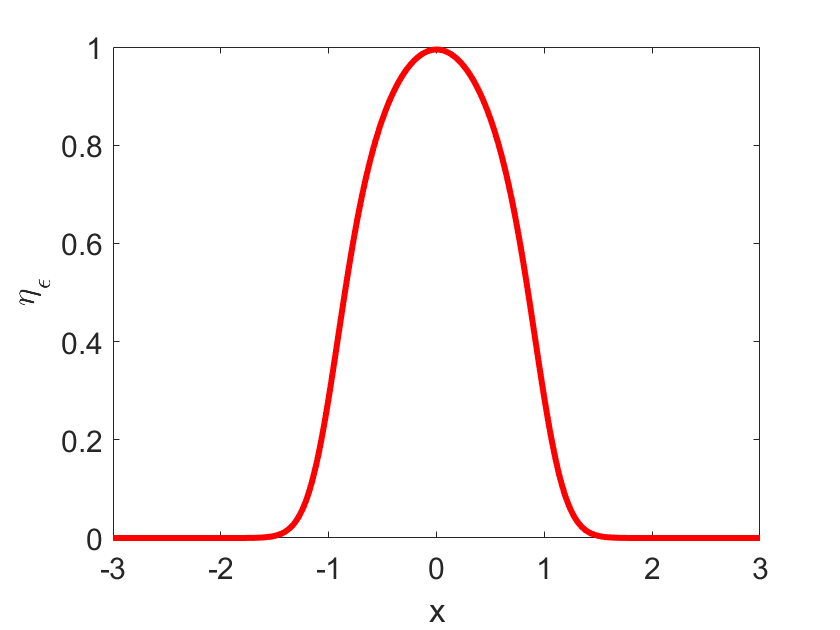}
	\caption{The profile of $\eta_{\e}$ versus $x$ for $\e = 0.1$. }
	\label{fig-1}
\end{figure}

In order to introduce the steady states of the coupled system of Gross--Pitaevskii equations, let us first describe the ground state of the scalar Gross--Pitaevskii equation with the harmonic potential.

Let $\eta_{\e} \in  H^1(\RR;\mathbb{R}) \cap L^{2,1}(\RR;\mathbb{R})$ be the positive solution of the differential equation
\be\label{T1}
\e^2\eta''_\e(x)+(1-x^2-\eta_\e^2(x))\eta_\e(x)=0,\quad x\in\mathbb{R}.
\ee
The positive solution shown on Fig. \ref{fig-1} has been studied 
in the literature in many details \cite{GaPe,IgMi1, IgMi2}. 
The formal limit $\e \to 0$ corresponds to the so-called Thomas-Fermi cloud 
$\eta_0$ of the form
\begin{equation}
\label{TF-limit}
\eta_0(x) :=\sqrt{1-x^2}\;{\bf 1}_{\{\abs{x}<1\}}.
\end{equation}
By Proposition 2.1 in \cite{IgMi1}, the positive solution $\eta_{\e}$ satisfies
\begin{equation}
\label{P-1}
0 < \eta_\e(x) \leq 1, \quad x \in \RR
\end{equation}
and 
\begin{equation}
\label{P-1-plus}
\eta_{\e}(x) \leq \eta_0(x), \quad x \in [-1+\eps^{1/3},1-\eps^{1/3}],
\end{equation}
moreover, $\eta_{\e}$ converges to $\eta_0$ in the sense that for every compact subset $K$ of $\{\abs{x}<1\}$, there exists a constant $C_K > 0$ such that
\be
\label{P-2}
\norm{\eta_\e-\eta_0}_{C^1(K)}\leq C_K \e^2.
\ee
Refined asymptotic properties of $\eta_\e$ were further obtained in \cite{GaPe}, where existence of $\eta_{\e}$ was obtained from 
the Hasting--McLeod solution of the Painlev\'e-II equation. 
By Theorem 1 in \cite{GaPe}, there exists a constant $C > 0$ such that
\be
\label{P-3}
\norm{\eta_\e-\eta_0}_{\infty} \leq C \e^{\frac{1}{3}}, \quad 
\norm{\eta'_\e}_{\infty} \leq C \e^{-\frac{1}{3}}.
\ee

Equation (\ref{T1}) is the Euler--Lagrange equation for the following energy functional 
\begin{equation}
\label{energy-F}
F_\e (\eta) = \frac12 \int_\R \left[ \e^2 (\eta')^2 + (x^2-1) \eta^2 + \frac12 \eta^4  \right] \, dx.
\end{equation}
By using the transformation
\begin{equation}
\label{transformation}
\psi_{1,2}(x) = \eta_{\e}(x) \phi_{1,2}\(\frac{x}{\e}\)
\end{equation}
and making the change of variables $x \to z := x/\e$, 
we obtain $G_{\e}(\Psi) = F_{\e}(\eta_\e) + \e J_{\e}(\Phi)$,  
where
\begin{align}
J_\e(\Phi) &= \frac12 \int_\R \left[
\eta_\e(\e \cdot)^2  (\phi_1')^2 + \eta_\e(\e \cdot)^2  (\phi_2')^2
+ \frac12 \eta_\e(\e \cdot)^4 (\phi_1^2 + \phi_2^2 -1)^2 \right. 
\nonumber \\
& \qquad  + (\gamma-1) \eta_\e(\e \cdot)^4 \phi_{1}^2 \phi_{2}^2 \bigg] \,dz
\label{GPU-reduced}
\end{align}
for $\Phi = (\phi_1,\phi_2)$ defined in $H^1_{\rm loc}(\RR;\RR^2)$ for which $J_\e(\Phi) < \infty$. 

Since $\lim\limits_{\e \to 0} \eta_{\e}(\e z) = \eta_0(0) = 1$ for every $z \in \RR$, the formal limit of (\ref{GPU-reduced}) is the energy functional 
studied in \cite{ABCP}:
\begin{align}
J_0(\Phi) = \frac12 \int_\R \left[  (\phi_1')^2 + (\phi_2')^2 + \frac12 (\phi_1^2 + \phi_2^2 -1)^2 + (\gamma-1)\phi_1^2 \phi_2^2 \right] \, dz.
\label{GPU-0}
\end{align}
The Euler--Lagrange equations for $J_0(\Phi)$ is the coupled system 
of homogeneous Gross--Pitaevskii equations without the harmonic potential:
\begin{equation}\label{hc}
\left.\begin{aligned}
- \phi''_1(z) + \left( \phi_1(z)^2 + \gamma \phi_2(z)^2 -1\right)\phi_1(z) &=0, \\
- \phi''_2(z) + \left( \gamma \phi_1(z)^2 + \phi_2(z)^2 -1\right)\phi_2(z) & =0,
\end{aligned}\right\} \quad z \in\RR.
\end{equation}
A symmetric pair of domain wall solutions exists for $\gamma > 1$ 
as minimizers of $J_0$ by Theorem 1.1 in \cite{ABCP} (see \cite{Berg} for earlier mathematical results and \cite{Mineev} for the first fundamental result 
in physics literature). One particular domain wall solution $\Phi = (\phi_1,\phi_2)$ to the system (\ref{hc}) 
satisfies the following boundary conditions:
\begin{gather}
\label{plus}
\left. \begin{aligned}
\phi_1(z)\to 0, \quad \phi_2(z)\to 1, \qquad\text{as $z\to-\infty$,}\\
\phi_1(z)\to 1, \quad \phi_2(z)\to 0, \qquad\text{as $z\to +\infty$,}
\end{aligned}\right\}
\end{gather}
whereas the other domain wall can be obtained by the transformation 
$\phi_1 \leftrightarrow \phi_2$.

Theorems 2.1, 2.4, and 3.1 in \cite{ABCP} state that $(\phi_1,\phi_2)$ 
for the domain wall solution with the boundary conditions (\ref{plus})  satisfies the following properties:
\begin{itemize}
	\item[(a)] $\phi_2(z)=\phi_1(-z)$ for all $z\in \mathbb{R}$;
	\item[(b)] $\phi_1^2(z) + \phi_2^2(z) \leq 1$ for all $z \in \mathbb{R}$;
	\item[(c)] $\phi_1'(z) > 0$ and $\phi_2'(z) < 0$ for all $z \in \mathbb{R}$.
\end{itemize}
Uniqueness of domain walls with properties (a)--(c) was shown in \cite{AS} and more generally in \cite{Farina}.
Orbital stability of domain walls in the coupled system of Gross--Pitaevskii equations was established in a weighted Hilbert space in
\cite{CPP18}.

\subsection{Uncoupled, Symmetric and Domain Wall States}

We introduce three steady state solutions to system (\ref{DWHPVR}) that will be studied in this work. 

\begin{enumerate}
	\item[(S1)] For every $\gamma >0$, there exist {\em uncoupled states}
\begin{equation}
\label{uncoupled-state}
\psi_1(x) = \eta_{\e}(x), \quad \psi_2(x) = 0, \quad \mbox{\rm and} \quad
\psi_1(x) = 0, \quad \psi_2(x) = \eta_{\e}(x).
\end{equation}

	\item[(S2)] For every $\gamma > 0$, there exists the {\em symmetric state}
\be
\label{positive-state}
\psi_1(x) = \psi_2(x) = (1 + \gamma)^{-1/2} \eta_{\e}(x).
\ee

	\item[(S3)] We say that the solution is the {\em domain wall state} if $\psi_1(x) = \psi_2(-x)$ for all $x \in \mathbb{R}$ but $\psi_1(x) \not\equiv \psi_2(x)$.
\end{enumerate}

\begin{figure}[htpb!]
	\centering
	\includegraphics[width=9cm,height=7cm]{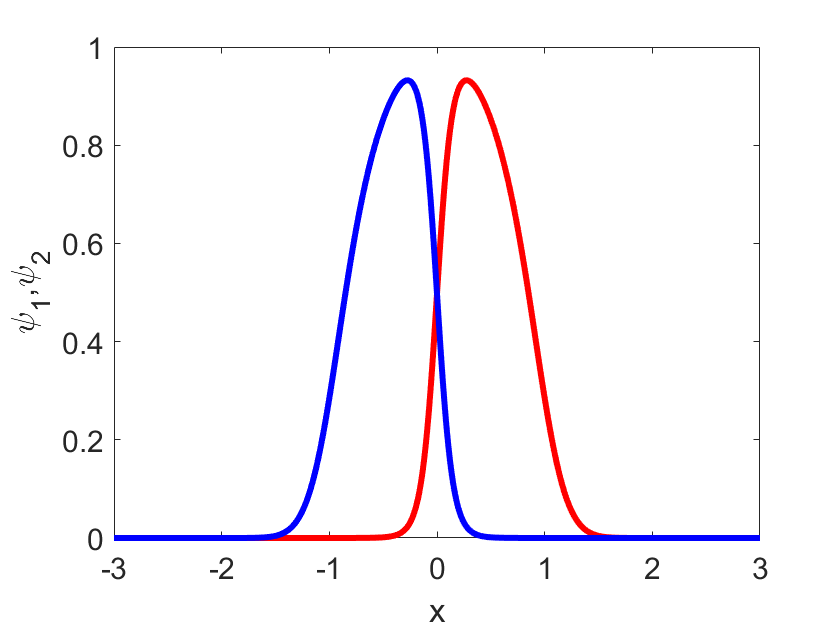}
	\caption{The profile of the domain wall state versus $x$ for $\e = 0.1$ and $\gamma = 3$. }
	\label{fig-2}
\end{figure}

Both the symmetric state (S2) and the domain wall state (S3) are defined in the energy space with the symmetry given by 
\begin{equation}
\label{energy-space}
\mathcal{E}_{s} := \{ \Psi \in \mathcal{E} \, : \quad \psi_1(x) = \psi_2(-x), \;\; x \in \RR \}.
\end{equation}
One domain wall state (S3) with the property $\psi_1(x) > \psi_2(x) > 0$ for $x > 0$ is shown on Fig. \ref{fig-2}. Another domain wall state can be obtained 
by the transformation $\psi_1 \leftrightarrow \psi_2$.
Existence, variational characterization, and bifurcations of 
the domain wall state (S3) are the main subjects of this work.

\subsection{Main results}

Our first theorem establishes a rigorous connection between problems \eqref{DWHPVR} and \eqref{hc}. Recall the product form (\ref{transformation}) 
which reduces the minimization problem for $G_\e$ to one for $J_\e$. A consequence of Lemma \ref{sepen} below is that $\Psi \in \mathcal{E}$ minimizes $G_\e$ if and only if $\Phi \in H^1_{\rm loc}(\RR;\RR^2)$ minimizes $J_\e$. Furthermore, we prove that $J_\e$ converges to $J_0$ in the sense of $\Gamma$-convergence \cite{dalmaso}.

\begin{theorem}\label{gconv1}
	Let $\gamma>1$, $\e>0$ and $\Phi_\e$ be a sequence in $H^1_{\rm loc}(\R,\mathbb{R}^2)$. Then, 
	\begin{itemize}
		\item[(i)] If $J_\e(\Phi_\e) < \infty$ uniformly in $\e$ then as $\e \to 0$ we have a subsequence (not relabelled) $\Phi_\e \rightharpoonup \Phi$ in $H^1_{\rm loc}(\R,\mathbb{R}^2)$.
		\item[(ii)] \emph{($\Gamma-\liminf$ inequality)} If $\Phi_\e \to \Phi$ in $L^2_{\rm loc}(\R,\mathbb{R}^2)$, then 
		\begin{equation}
		\label{inequality-1}
		\liminf_{\e \to 0} J_\e(\Phi_\e) \geq J_0(\Phi).
		\end{equation}
		\item[(iii)] \emph{($\Gamma-\limsup$ inequality)} If $\Phi \in H^1_{\rm loc}(\R,\mathbb{R}^2)$, then there is a sequence 
		
\noindent $\Phi_\e \in H^1_{\rm loc}(\R,\mathbb{R}^2)$ such that as $\e \to 0$ we have $\Phi_\e \to \Phi$ in $L^2_{\rm loc}(\R,\mathbb{R}^2)$ and 
		\begin{equation}
		\label{inequality-2}
		\limsup_{\e \to 0} J_\e(\Phi_\e) \leq J_0(\Phi).
		\end{equation}
	\end{itemize}
\end{theorem}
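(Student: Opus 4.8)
The plan is to treat the three parts in the order (iii), (i), (ii), since the recovery sequence is essentially free while the compactness statement is the engine driving the $\Gamma$-$\liminf$ inequality. Two facts about the weight will be used repeatedly. First, $0 < \eta_\e \leq 1$ by \eqref{P-1}. Second, $\eta_\e(\e \cdot) \to 1$ uniformly on every interval $[-R,R]$: for $\e$ small the set $[-\e R, \e R]$ lies in a fixed compact subset of $\{|x|<1\}$, so \eqref{P-2} together with $\eta_0(\e z) = \sqrt{1-\e^2 z^2} \to 1$ gives $\|\eta_\e(\e \cdot) - 1\|_{C^0([-R,R])} \to 0$. A third structural fact, crucial throughout, is that for $\gamma > 1$ every term in the integrands of both $J_\e$ and $J_0$ is pointwise nonnegative, the coupling term $(\gamma-1)\phi_1^2\phi_2^2$ included. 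For part (iii) I would simply take the constant sequence $\Phi_\e \equiv \Phi$, which converges to $\Phi$ in $L^2_{\rm loc}$. If $J_0(\Phi) = \infty$ there is nothing to prove; otherwise, comparing $J_\e(\Phi)$ with $J_0(\Phi)$ term by term and using $\eta_\e(\e z)^2 \leq 1$, $\eta_\e(\e z)^4 \leq 1$ against the nonnegative integrands yields $J_\e(\Phi) \leq J_0(\Phi)$ for every $\e$, hence $\limsup_\e J_\e(\Phi) \leq J_0(\Phi)$.

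For part (i) I would fix $R$ and work on $[-R,R]$. For $\e$ small the uniform convergence of the weights gives $\eta_\e(\e z)^2 \geq \tfrac14$ and $\eta_\e(\e z)^4 \geq \tfrac{1}{16}$ there, so the uniform bound on $J_\e(\Phi_\e)$ controls both $\int_{-R}^R ((\phi_{1,\e}')^2 + (\phi_{2,\e}')^2)\,dz$ from the kinetic terms and $\int_{-R}^R (\phi_{1,\e}^2 + \phi_{2,\e}^2 - 1)^2\,dz$ from the potential term. The latter bounds $\|\phi_{i,\e}\|_{L^4(-R,R)}$ and hence $\|\phi_{i,\e}\|_{L^2(-R,R)}$, so $\Phi_\e$ is bounded in $H^1(-R,R)$. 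A diagonal extraction over $R \in \N$ then produces a subsequence converging weakly in $H^1_{\rm loc}(\R,\R^2)$.

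For part (ii) I may assume $\liminf_\e J_\e(\Phi_\e) < \infty$ and pass to a subsequence attaining it; this subsequence is uniformly bounded, so by (i) a further subsequence converges weakly in $H^1_{\rm loc}$, and the weak limit must coincide with the $L^2_{\rm loc}$ limit $\Phi$. In one dimension $H^1(-R,R) \hookrightarrow C^0([-R,R])$ compactly, so in fact $\Phi_\e \to \Phi$ uniformly on $[-R,R]$. On this interval the potential integrands then converge uniformly, since both $\Phi_\e$ and the weights $\eta_\e(\e \cdot)^4 \to 1$ do, and so their integrals pass to the limit. For the kinetic term I would write $\eta_\e(\e z)^2 = 1 + (\eta_\e(\e z)^2 - 1)$, absorb the uniformly small correction against the bounded $L^2$-norm of $\phi_{i,\e}'$, and invoke weak lower semicontinuity of the Dirichlet energy. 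Discarding the exterior integral by nonnegativity of the integrands yields $\liminf_\e J_\e(\Phi_\e) \geq \tfrac12\int_{-R}^R[\cdots]\,dz$, the contribution of $J_0(\Phi)$ on $[-R,R]$; letting $R \to \infty$ and using monotone convergence gives $\liminf_\e J_\e(\Phi_\e) \geq J_0(\Phi)$.

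I expect the only genuine obstacle to be the $\Gamma$-$\liminf$ inequality (ii), where one must simultaneously control the weight degenerating near the edge of the Thomas--Fermi cloud and pass the quartic nonlinearity to the limit. Both difficulties dissolve on compact intervals, where the weights are uniformly close to $1$ and the one-dimensional embedding upgrades weak $H^1$ convergence to uniform convergence; the passage back to all of $\R$ then rests entirely on the sign condition $\gamma > 1$, which renders every integrand nonnegative so that truncation can only decrease the energy.
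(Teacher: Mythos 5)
Your proposal is correct and follows essentially the same route as the paper's proof: the constant recovery sequence with $\eta_\e(\e\cdot)\leq 1$ for the $\Gamma$-$\limsup$ inequality, local coercivity from the locally uniform convergence $\eta_\e(\e\cdot)\to 1$ for compactness, and compactness plus lower semicontinuity on $[-R,R]$ followed by truncation (using nonnegativity for $\gamma>1$) and monotone convergence for the $\Gamma$-$\liminf$ inequality. The only difference is expository: you spell out the $L^4$-to-$L^2$ bound in (i) and the uniform convergence of the nonlinear terms via the one-dimensional embedding in (ii), steps the paper leaves implicit.
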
	
In view of the decomposition $G_{\e}(\Psi) = F_{\e}(\eta_\e) + \e J_{\e}(\Phi)$,
Theorem \ref{gconv1} establishes a relation between minimizers of $G_\e$ for small $\e > 0$ and those of the limiting functional $J_0$. This relation was one of the motivations for  studying domain walls in \cite{ABCP, CPP18} as minimizers of $J_0$ for $\gamma>1$ with prescribed limits at infinity.

The functional $J_0$ also possesses critical points $(1,0)$, $(0,1)$, and $(1+\gamma)^{-1/2} (1,1)$, which correspond to the steady states (S1) and (S2) in the limit $\e \to 0$. The question of their variational characterization for $\gamma > 1$ was left open in \cite{ABCP}. 
The case $0 <\gamma<1$ was not considered in \cite{ABCP}. The following two theorems clarify the variational characterization of the three steady states (S1), (S2), and (S3) as critical points of the energy $G_{\e}$.

\begin{theorem}
For every $\e > 0$, the symmetric state (S2) is a global minimizer of the energy $G_{\e}$ in the energy space $\mathcal{E}$ if $\gamma \in (0,1)$ and a saddle point of the energy if $\gamma \in (1,\infty)$. Moreover, the symmetric state is the only positive solution of the coupled system \eqref{DWHPVR} if $\gamma \in (0,1)$. The uncoupled states (S1) are saddle points of the energy if $\gamma \in (0,1)$ and the only global minimizers of the energy if $\gamma \in (1,\infty)$.
\label{theorem-variational}
\end{theorem}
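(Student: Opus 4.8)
The plan is to exploit the fact that, apart from the gradient terms and the coupling term $(\gamma-1)\psi_1^2\psi_2^2$, the integrand of $G_\e$ depends on $(\psi_1,\psi_2)$ only through the total density $\rho^2:=\psi_1^2+\psi_2^2$. Two pointwise inequalities will do most of the work: the diamagnetic bound $(\psi_1')^2+(\psi_2')^2\ge(\rho')^2$, valid a.e. with $\rho=\sqrt{\psi_1^2+\psi_2^2}\in H^1$ by Cauchy--Schwarz, and the AM--GM bound $\psi_1^2\psi_2^2\le\tfrac14\rho^4$. The sign of $\gamma-1$ decides in which direction the coupling term is estimated, and this is precisely what separates the two regimes; the spectral comparison used for uniqueness will be the delicate point.

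For $\gamma\in(0,1)$ I would first show that (S2) is the global minimizer. Since $\gamma-1<0$, AM--GM gives $(\gamma-1)\psi_1^2\psi_2^2\ge\tfrac{\gamma-1}{4}\rho^4$, and combined with the diamagnetic inequality this yields $G_\e(\Psi)\ge\tilde G_\e(\rho)$, where $\tilde G_\e(\rho)=\tfrac12\int_\R[\e^2(\rho')^2+(x^2-1)\rho^2+\tfrac{1+\gamma}{4}\rho^4]\,dx$. The rescaling $\rho=\sqrt{2/(1+\gamma)}\,\zeta$ identifies $\tilde G_\e$ with $\tfrac{2}{1+\gamma}F_\e(\zeta)$, whose unique nonnegative minimizer is $\eta_\e$, and unravelling the rescaling shows the resulting lower bound equals the value of $G_\e$ at (S2); tracking the equality cases forces $\psi_1=\psi_2$ and $\rho=\sqrt{2/(1+\gamma)}\,\eta_\e$, i.e. exactly (S2). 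For uniqueness of positive solutions I would argue spectrally rather than energetically: with $s=\psi_1+\psi_2$, $w=\psi_1-\psi_2$, adding and subtracting the equations in \eqref{DWHPVR} gives $(-\e^2\partial_x^2+W)s=0$ and $(-\e^2\partial_x^2+V)w=0$, where $W=(x^2-1)+\rho^2+(\gamma-1)\psi_1\psi_2$ and $V=(x^2-1)+\rho^2+(1-\gamma)\psi_1\psi_2$, so $V-W=2(1-\gamma)\psi_1\psi_2>0$. For a positive solution $s>0$ is the principal eigenfunction of the confining operator $-\e^2\partial_x^2+W$, so its principal eigenvalue is $0$; strict monotonicity of the principal eigenvalue in the potential then forces $\lambda_1(-\e^2\partial_x^2+V)>0$, which contradicts $(-\e^2\partial_x^2+V)w=0$ unless $w\equiv0$. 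Hence $\psi_1=\psi_2$, and the scalar equation pins the common value down to (S2).

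For $\gamma\in(1,\infty)$ the coupling term has the favourable sign, $(\gamma-1)\psi_1^2\psi_2^2\ge0$, so $G_\e(\Psi)\ge\tfrac12\int_\R[\e^2((\psi_1')^2+(\psi_2')^2)+(x^2-1)\rho^2+\tfrac12\rho^4]\,dx\ge F_\e(\rho)\ge F_\e(\eta_\e)$, which is the value of $G_\e$ at (S1), using again the diamagnetic inequality and the minimality of $\eta_\e$ for $F_\e$. Equality now forces $\psi_1\psi_2\equiv0$ and $\rho=\eta_\e>0$; since $\psi_1,\psi_2$ are continuous, the open sets $\{\psi_1\ne0\}$ and $\{\psi_2\ne0\}$ are disjoint, cover $\R$, and by connectedness one of them is empty, so one component vanishes identically and we land exactly on (S1). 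This shows (S1) are the only global minimizers.

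The two saddle statements I would obtain from the Hessian of $G_\e$ tested on symmetric versus antisymmetric directions. At (S2) the antisymmetric direction $(v,-v)$ reduces the second variation to $2\langle v,L_-v\rangle$ with $L_-=-\e^2\partial_x^2+(x^2-1)+\tfrac{3-\gamma}{1+\gamma}\eta_\e^2$, and the symmetric direction $(v,v)$ to $2\langle v,L_+v\rangle$ with $L_+=-\e^2\partial_x^2+(x^2-1)+3\eta_\e^2>0$. Using $\eta_\e$ as trial function together with the identity $\int_\R[\e^2(\eta_\e')^2+(x^2-1)\eta_\e^2]\,dx=-\int_\R\eta_\e^4\,dx$ coming from \eqref{T1}, one finds $\langle\eta_\e,L_-\eta_\e\rangle=\tfrac{2(1-\gamma)}{1+\gamma}\int_\R\eta_\e^4\,dx<0$ for $\gamma>1$, giving a descent direction alongside the ascent direction from $L_+>0$, so (S2) is a saddle. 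At (S1) the Hessian decouples as $\langle u_1,L_+u_1\rangle+\langle u_2,L_\gamma u_2\rangle$ with $L_\gamma=-\e^2\partial_x^2+(x^2-1)+\gamma\eta_\e^2$, and the same trial function gives $\langle\eta_\e,L_\gamma\eta_\e\rangle=(\gamma-1)\int_\R\eta_\e^4\,dx<0$ for $\gamma<1$, so (S1) is a saddle there. I expect the main obstacle to be the uniqueness argument for $\gamma<1$: one must know the confining operators have discrete spectrum with positive principal eigenfunctions, that $w\in L^2$ is a genuine eigenfunction, and that the strict monotonicity $\lambda_1(-\e^2\partial_x^2+V)>\lambda_1(-\e^2\partial_x^2+W)$ holds — most cleanly by testing $-\e^2\partial_x^2+V$ against the positive principal eigenfunction of $-\e^2\partial_x^2+W$.
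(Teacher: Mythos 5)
Your proposal is correct, but it takes a genuinely different route from the paper's in each of its three parts. For the minimizer claims, the paper first proves the splitting $G_\e(\Psi)=F_\e(\eta_\e)+\e J_\e(\Phi)$ (Lemma \ref{sepen}) and then minimizes the reduced potential $W_\gamma(\phi_1,\phi_2)=\frac12(\phi_1^2+\phi_2^2-1)^2+(\gamma-1)\phi_1^2\phi_2^2$ pointwise in polar coordinates; you instead bound $G_\e$ below directly by a rescaled scalar functional $F_\e$ via the diamagnetic and AM--GM inequalities, which bypasses the splitting lemma entirely (with the usual caveat, shared by the paper, that equality cases determine minimizers only up to component-wise sign changes). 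For uniqueness of the positive solution when $\gamma\in(0,1)$, the paper uses a hidden-convexity argument: $G_\e$ is strictly convex as a function of $(w_1,w_2)=(\psi_1^2,\psi_2^2)$ when $|\gamma|<1$ (Lemma \ref{lem-uniqueness}), so two positive critical points cannot coexist. Your sum/difference decomposition with strict monotonicity of the principal eigenvalue in the potential is also valid --- the potentials $W$ and $V$ are confining since $\psi_1,\psi_2$ are bounded, so the operators have discrete spectrum, a positive $L^2$ eigenfunction must be the ground state, and testing against the positive ground state of $-\e^2\partial_x^2+V$ yields $\lambda_1(V)>\lambda_1(W)=0$, forcing $w\equiv 0$ --- but it invokes more Schr\"{o}dinger-operator machinery than the paper's two-line variational computation. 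Conversely, your saddle-point arguments are \emph{more} elementary than the paper's: where the paper invokes Sturm's comparison theorem against the operators $L_\pm$ of \cite{GaPe09,GaPe}, you exhibit the explicit trial function $\eta_\e$ and use the identity $\int_\R [\e^2(\eta_\e')^2+(x^2-1)\eta_\e^2]\,dx=-\int_\R\eta_\e^4\,dx$ from \eqref{T1} to get the negative values $\frac{2(1-\gamma)}{1+\gamma}\int_\R\eta_\e^4\,dx$ at (S2) for $\gamma>1$ and $(\gamma-1)\int_\R\eta_\e^4\,dx$ at (S1) for $\gamma<1$; both computations are correct, and your antisymmetric-direction operator $-\e^2\partial_x^2+x^2-1+\frac{3-\gamma}{1+\gamma}\eta_\e^2$ coincides with the paper's diagonalized entry in \eqref{rotation}. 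What the paper's route buys is reusable machinery --- the splitting lemma, the Hessian diagonalization, and the operator comparisons all reappear in the proofs of Theorems \ref{theorem-domain-walls}--\ref{theorem-asymptotic}; what your route buys is a more self-contained proof of this particular theorem, relying only on the known strict positivity of $L_+$.
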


\begin{theorem} 
There exists $\e_0 > 0$ such that for every $\e \in (0,\e_0)$ there is $\gamma_0(\e) \in (1,\infty)$ such that the symmetric state (S2) is a global minimizer of the energy $G_{\e}$ in the energy space with symmetry $\mathcal{E}_s$ if
$\gamma \in (0,\gamma_0(\e)]$ and a saddle point of the energy in $\mathcal{E}_s$ if $\gamma \in (\gamma_0(\e),\infty)$, where $\gamma_0(\e) \to 1$ as $\e \to 0$. Two domain wall states (S3) are global minimizers of the energy $G_{\e}$ in $\mathcal{E}_s$ if $\gamma \in (\gamma_0(\e),\infty)$: one satisfies $\psi_1(x) > \psi_2(x) > 0$ for $x > 0$ and the other one obtained by the transformation $\psi_1 \leftrightarrow \psi_2$.
\label{theorem-domain-walls}
\end{theorem}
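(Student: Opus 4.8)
The plan is to pass to in-phase/out-of-phase coordinates adapted to $\mathcal{E}_s$ and reduce the whole statement to a single spectral threshold. For $\Psi=(\psi_1,\psi_2)\in\mathcal{E}_s$ I set $s=\tfrac12(\psi_1+\psi_2)$ and $t=\tfrac12(\psi_1-\psi_2)$, so that the constraint $\psi_1(x)=\psi_2(-x)$ becomes ``$s$ even, $t$ odd,'' while $\psi_1,\psi_2\ge0$ (which may be assumed after replacing $\psi_i$ by $|\psi_i|$, an operation preserving $\mathcal{E}_s$ and not increasing $G_\e$) becomes the pointwise bound $s\ge|t|$. A direct computation gives the exact splitting $G_\e(\Psi)=A(s)+B(s,t)$ with $A(s)=\int_\RR[\e^2(s')^2+(x^2-1)s^2+\tfrac{\gamma+1}{2}s^4]\,dx$ and $B(s,t)=\int_\RR[\e^2(t')^2+(x^2-1)t^2+(3-\gamma)s^2t^2+\tfrac{\gamma+1}{2}t^4]\,dx$, where $A$ is a scalar Gross--Pitaevskii energy whose unique positive minimizer is $s_\star:=(1+\gamma)^{-1/2}\eta_\e$ and $B(s,0)=0$; the symmetric state is exactly $(s,t)=(s_\star,0)$. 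The even direction of the second variation at the symmetric state is governed by $\mathcal L_\e^{+}=-\e^2\partial_x^2+(x^2-1)+3\eta_\e^2=\tilde{\mathcal L}+2\eta_\e^2$ with $\tilde{\mathcal L}:=-\e^2\partial_x^2+(x^2-1)+\eta_\e^2$; since $\tilde{\mathcal L}\eta_\e=0$ with $\eta_\e>0$ we have $\tilde{\mathcal L}\ge0$ and hence $\mathcal L_\e^{+}>0$ for all $\gamma$, so the in-phase direction is always stable. The out-of-phase direction is governed, on odd functions, by $\mathcal L_\e^{-}=-\e^2\partial_x^2+(x^2-1)+\tfrac{3-\gamma}{1+\gamma}\eta_\e^2=\tilde{\mathcal L}-\tfrac{2(\gamma-1)}{1+\gamma}\eta_\e^2$. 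As $\tfrac{2(\gamma-1)}{1+\gamma}$ is strictly increasing in $\gamma$, the lowest eigenvalue of $\mathcal L_\e^{-}$ on the odd subspace is strictly decreasing in $\gamma$, and I define $\gamma_0(\e)$ as the unique value where it vanishes: positive (local min in $\mathcal{E}_s$) for $\gamma<\gamma_0$, negative (saddle in $\mathcal{E}_s$) for $\gamma>\gamma_0$.

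To pin down $\gamma_0(\e)$ I use the substitution $w=\eta_\e\chi$ together with the identity $\langle w,\mathcal L_\e^{-}w\rangle=\int_\RR[\e^2\eta_\e^2(\chi')^2-\tfrac{2(\gamma-1)}{1+\gamma}\eta_\e^4\chi^2]\,dx$, obtained by integration by parts using $\tilde{\mathcal L}\eta_\e=0$. At $\gamma=1$ this equals $\e^2\int_\RR\eta_\e^2(\chi')^2\,dx>0$ for every nonzero odd $w$ (oddness forces $\chi$ non-constant), whence $\gamma_0(\e)>1$. Testing with $\chi(x)=x$, i.e.\ $w=x\eta_\e$, gives $\langle w,\mathcal L_\e^{-}w\rangle=\e^2\int_\RR\eta_\e^2\,dx-\tfrac{2(\gamma-1)}{1+\gamma}\int_\RR x^2\eta_\e^4\,dx$; since $\int_\RR\eta_\e^2\,dx$ and $\int_\RR x^2\eta_\e^4\,dx$ converge to positive constants as $\e\to0$ by \eqref{P-2}--\eqref{P-3}, this quantity is negative as soon as $\tfrac{2(\gamma-1)}{1+\gamma}\ge C\e^2$, forcing $\gamma_0(\e)\le 1+O(\e^2)$ and hence $\gamma_0(\e)\to1$.

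Next I establish existence of a minimizer $\Psi^\star$ of $G_\e$ over $\mathcal{E}_s$ by the direct method: a minimizing sequence is bounded in $\mathcal{E}=H^1\cap L^{2,1}$, the harmonic weight yields compactness of $\mathcal{E}\hookrightarrow L^2$, and $G_\e$ is weakly lower semicontinuous (convex gradient term, nonnegative quartic for $\gamma\ge1$) on the weakly closed set $\mathcal{E}_s$; I take $\Psi^\star\ge0$. For $\gamma\in(0,1]$ global minimality of the symmetric state over $\mathcal{E}_s$ is inherited from Theorem \ref{theorem-variational}, since $\mathcal{E}_s\subset\mathcal{E}$. For $\gamma>\gamma_0(\e)$ the symmetric state has a strict descent direction in $\mathcal{E}_s$ by the spectral analysis above, so $\Psi^\star$ is not the symmetric state; since the uncoupled states (S1) do not lie in $\mathcal{E}_s$, and since a minimizer with $\psi_1^\star\equiv\psi_2^\star$ would, via $G_\e=A(s)$ on the diagonal, minimize $A$ over even functions and hence equal $s_\star$ (the symmetric state), we conclude $\psi_1^\star\not\equiv\psi_2^\star$, i.e.\ $\Psi^\star$ is a domain wall (S3). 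The qualitative properties $\psi_1^\star(x)>\psi_2^\star(x)>0$ for $x>0$, and the mirror wall obtained by $\psi_1\leftrightarrow\psi_2$, I then derive from the strong maximum principle applied to each component and to $t^\star=\tfrac12(\psi_1^\star-\psi_2^\star)$, together with a reflection/rearrangement argument excluding extra sign changes, exactly as for properties (a)--(c) in \cite{ABCP}; here Lemma \ref{sepen} identifies minimizers of $G_\e$ with those of $J_\e$.

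The main obstacle is the window $\gamma\in(1,\gamma_0(\e)]$, where the symmetric state is only a saddle in $\mathcal{E}$ (Theorem \ref{theorem-variational}) yet must be shown to be a \emph{global} minimizer in $\mathcal{E}_s$; local minimality from the second variation does not suffice. The route I would take is the hidden convexity of $G_\e$ in the density variables $n_i=\psi_i^2$: the kinetic term $\e^2\int_\RR|\partial_x\sqrt{n_i}|^2\,dx$ is convex in $n_i$, whereas the potential density $\tfrac12(n_1+n_2)^2+(\gamma-1)n_1n_2$ has Hessian $\bigl(\begin{smallmatrix}1&\gamma\\ \gamma&1\end{smallmatrix}\bigr)$, concave exactly in the out-of-phase direction $n_1-n_2$ when $\gamma>1$. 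On $\mathcal{E}_s$ this direction is odd, so the destabilizing perturbations must oscillate and the kinetic energy regains a spectral gap; the crux is a uniform-in-background lower bound showing this gap dominates the $(\gamma-1)$-concavity up to precisely $\gamma_0(\e)$, i.e.\ that the symmetric configuration is the worst case. Equivalently, working directly with the splitting, one writes $B(s,t)=\langle t,\mathcal L_\e^{-}t\rangle+(3-\gamma)\int_\RR(s^2-s_\star^2)t^2\,dx+\tfrac{\gamma+1}{2}\int_\RR t^4\,dx$ and must control the indefinite middle term against $A(s)-A(s_\star)\ge0$ and the pointwise bound $|t|\le s$; the fact that a naive pointwise completion of squares fails is exactly what makes this range delicate. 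Once global minimality on $\mathcal{E}_s$ is secured for $\gamma\le\gamma_0(\e)$, the classification of $\Psi^\star$ above completes the proof.
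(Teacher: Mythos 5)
Most of your architecture is sound and in fact runs parallel to the paper's proof: your $(s,t)=\tfrac12(\psi_1\pm\psi_2)$ splitting is the same symmetric/antisymmetric decomposition the paper obtains after the Lassoued--Mironescu substitution $\Psi=\eta_\e\Phi$; your operator $\mathcal L_\e^{-}=L_-+2\tfrac{1-\gamma}{1+\gamma}\eta_\e^2$ acting on odd functions is exactly the paper's $L_\gamma$ with a Dirichlet condition on $\RR_+$, and defines the same threshold $\gamma_0(\e)$; your trial function $w=x\eta_\e$ gives $\gamma_0(\e)\le 1+C\e^2$ (and finiteness of $\gamma_0(\e)$ for small $\e$), which is cleaner than the paper's piecewise trial function and also explains why the theorem needs $\e<\e_0$; and your classification of a non-symmetric minimizer for $\gamma>\gamma_0(\e)$ is the content of the paper's Lemma \ref{lemma-property} (swap $\psi_1,\psi_2$ between intersection points, then apply the strong maximum principle to $\xi=\psi_1-\psi_2$, which solves the linear equation (\ref{xi-eq})).

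The genuine gap is exactly where you flag it: global minimality of the symmetric state in $\mathcal{E}_s$ for $\gamma\in(1,\gamma_0(\e)]$. You state the difficulty (the indefinite cross term $(3-\gamma)\int(s^2-s_\star^2)t^2\,dx$) and gesture at two possible routes, but neither is carried out, so the first assertion of the theorem is not proved on this window. The paper closes this step not by an estimate but by an exact algebraic identity: expanding $J_\e$ around $\Phi=(1+\gamma)^{-1/2}(1,1)$, all cubic and quartic terms, together with the indefinite cross term, reassemble into perfect squares plus a single indefinite quadratic form supported on the antisymmetric (Dirichlet) direction. In your variables, writing $\psi_1=\eta_\e(\sigma+\tau)$, $\psi_2=\eta_\e(\sigma-\tau)$ with $\sigma$ even, $\tau$ odd, and $c^2=\tfrac{1}{1+\gamma}$, the identity reads
\begin{align*}
G_\e(\Psi)-G_\e(\Psi_\star)
&=\int_{\RR}\Big[\e^2\eta_\e^2(\tau')^2-\tfrac{2(\gamma-1)}{\gamma+1}\,\eta_\e^4\tau^2\Big]dx \\
&\quad+\int_{\RR}\Big[\e^2\eta_\e^2(\sigma')^2+\eta_\e^4\big(\sigma^2+\tau^2-c^2\big)^2
+\tfrac{\gamma-1}{2}\,\eta_\e^4\big(\sigma^2-\tau^2-c^2\big)^2\Big]dx,
\end{align*}
i.e.\ your troublesome term $(3-\gamma)\int(s^2-s_\star^2)t^2\,dx$ is absorbed \emph{exactly} by combining the quartic parts of $A(s)-A(s_\star)$ and of $B(s,t)$ into the two squares, at the cost of weakening the coefficient of $-\tau^2$ from $1$ to $\tfrac{2(\gamma-1)}{\gamma+1}$. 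The first integral is precisely the quadratic form whose non-negativity on odd $\tau$ defines $\gamma\le\gamma_0(\e)$, and the second is manifestly non-negative for $\gamma\ge1$; hence local spectral non-negativity upgrades, with no loss, to global minimality on $(1,\gamma_0(\e)]$, which is the missing idea in your proposal. (For $\gamma\in(0,1]$ your reduction to Theorem \ref{theorem-variational} is fine, and the rest of your argument then completes the case $\gamma>\gamma_0(\e)$ as in the paper.)
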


\begin{figure}[htpb!]
	\centering
	\includegraphics[width=8.5cm,height=6.5cm]{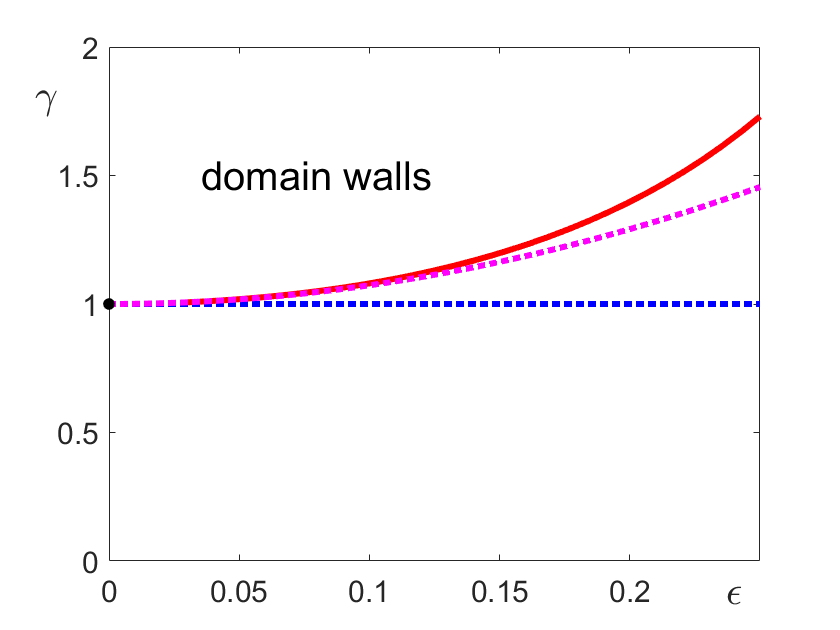}
	\caption{Bifurcation diagram for the domain wall solutions on the 
		$(\e,\gamma)$ plane. Domain walls exist above the bifurcation curve $\gamma = \gamma_0(\e)$. }
	\label{fig-bif}
\end{figure}

The bifurcation curve $\gamma = \gamma_0(\e)$ is shown on Fig. \ref{fig-bif} 
on the $(\e,\gamma)$ plane. Domain wall states (S3) are global minimizers of the energy $G_{\e}$ in $\mathcal{E}_s$ above the bifurcation curve. 
The blue dotted curve shows the straight line $\gamma = 1$. 
The magenta dotted curve shows the approximation of the bifurcation curve 
explained below. 

It follows from the exponential decay of the domain wall states of
	the homogeneous system (\ref{hc}) to the boundary conditions (\ref{plus}) that the domain wall states without the harmonic potential
only exist for $\gamma > 1$, 
where they are minimizers of the energy $J_0$ \cite{ABCP}. 
It is surprising that we are not able to derive a similar result 
for the domain wall states (S3) in the case of the harmonic potential. By uniqueness of positive solutions 
of the coupled system (\ref{DWHPVR}) in Theorem \ref{theorem-variational}, the domain wall do not exist for $\gamma \in (0,1)$, but we do not have a similar non-existence result for 
$\gamma \in [1,\gamma_0(\e)]$. By Theorem \ref{theorem-domain-walls}, the domain wall states exist and are minimizers of the energy $G_{\e}$ if $\gamma \in (\gamma_0(\e),\infty)$.

In order to understand more 
properties in the construction of the domain wall states (S3), 
we introduce the Dirichlet condition 
$\psi_1(0) = \psi_2(0) = \alpha$ and define the energy space on the half-line:
\begin{equation}
\label{energy-half-line}
\mathcal{D}_{\alpha} := \{ \Psi \in H^1(\R_+; \R^2) \cap L^{2,1}(\R_+;\R^2) \, : \quad \psi_1(0) = \psi_2(0) = \alpha \}.
\end{equation}

The following theorem states that if $\alpha > 0$ is fixed and the non-symmetric critical points of $G_{\e}$ exist in $D_{\alpha}$, they are minimizers of $G_{\e}$ in $D_{\alpha}$. Unfortunately, this result does not eliminate the domain wall states in $\mathcal{E}_s$ for $\gamma \in [1,\gamma_0(\e)]$ since such states could be saddle points of $G_{\e}$ with respect to varying $\alpha$.

\begin{theorem}
	\label{DWalpha}
	Fix $\e > 0$, $\gamma > 0$, and $\alpha > 0$. Let $\Psi \in \mathcal{D}_{\alpha}$ be a critical point of the energy $G_{\e}$ satisfying $\psi_1 (x) > \psi_2(x) > 0$ for all $x \in (0,\infty)$. Then $\Psi = (\psi_1,\psi_2)$ and $\Psi' = (\psi_2,\psi_1)$ are the only global minimizers of the energy $G_{\e}$ in $\mathcal{D}_{\alpha}$.
\end{theorem}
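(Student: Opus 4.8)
The plan is to prove minimality through a ground-state (multiplicative) substitution, which is transparent for $\gamma\le 1$, and then to supplement it with a direct-method existence argument and a comparison argument tailored to the competitive regime $\gamma>1$. Since $\psi_i>0$ on $(0,\infty)$ and $\psi_i(0)=\alpha>0$, any competitor $\tilde\Psi\in\mathcal{D}_{\alpha}$ may be written as $\tilde\psi_i=\psi_i v_i$ with $v_i(0)=1$. Integrating the Dirichlet term by parts and substituting the Euler--Lagrange equations \eqref{DWHPVR} (restricted to $\R_+$) to eliminate the harmonic and linear contributions, I expect to obtain the exact identity
\[
G_\e(\tilde\Psi)-G_\e(\Psi)=\frac{\e^2}{2}\int_0^\infty\big(\psi_1^2(v_1')^2+\psi_2^2(v_2')^2\big)\,dx+\frac14\int_0^\infty\big(u^2+w^2+2\gamma\,uw\big)\,dx,
\]
with $u=\tilde\psi_1^2-\psi_1^2$ and $w=\tilde\psi_2^2-\psi_2^2$. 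The boundary terms at $x=0$ cancel because both $\Psi$ and $\tilde\Psi$ satisfy the same Dirichlet condition, and those at $x=\infty$ vanish by decay.

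For $0<\gamma\le 1$ the quadratic form $u^2+w^2+2\gamma\,uw$ is positive semidefinite, so $G_\e(\tilde\Psi)\ge G_\e(\Psi)$, with equality forcing $v_1'\equiv v_2'\equiv0$ and hence $\tilde\Psi=\Psi$; minimality and uniqueness follow at once. The main obstacle is that for $\gamma>1$ this quadratic form is \emph{indefinite}: along segregating competitors (one density raised, the other lowered) the cross term $2\gamma\,uw$ is negative, and the substitution by itself yields no sign. This is exactly the domain-wall regime, so the identity must be supplemented.

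For $\gamma>1$ I would argue variationally. First, a global minimizer $\Phi^\ast$ exists by the direct method: coercivity comes from the harmonic confinement $x^2(\psi_1^2+\psi_2^2)$ together with the quartic $\tfrac12(\psi_1^2+\psi_2^2)^2$ (which absorb the negative quadratic term by Young's inequality), and weak lower semicontinuity follows from convexity of the gradient part and the compact embedding $\mathcal{E}\hookrightarrow L^4$ afforded by the weight $|x|$. Because $G_\e$ is invariant under the swap $\psi_1\leftrightarrow\psi_2$ and under pointwise sorting $(\psi_1,\psi_2)\mapsto(\psi_1\vee\psi_2,\psi_1\wedge\psi_2)$ --- which preserves the Dirichlet data and the sum $(\psi_1')^2+(\psi_2')^2$ while leaving the symmetric potential unchanged --- I may take $\Phi^\ast$ ordered, $\phi_1^\ast\ge\phi_2^\ast\ge0$. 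The strong maximum principle then gives $\phi_2^\ast>0$ and either $\phi_1^\ast\equiv\phi_2^\ast$ or $\phi_1^\ast>\phi_2^\ast$. A useful companion tool here is the polarization inequality $G_\e(P)+G_\e(Q)\le G_\e(\tilde\Psi)+G_\e(\Psi)$ for $P=(\tilde\psi_1\vee\psi_1,\tilde\psi_2\wedge\psi_2)$, $Q=(\tilde\psi_1\wedge\psi_1,\tilde\psi_2\vee\psi_2)$, which holds termwise for $\gamma>1$ and helps force the minimizer into a configuration comparable to $\Psi$.

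It remains to identify $\Phi^\ast$ with $\Psi$ (or $\Psi'$). I would first rule out the symmetric alternative: whenever a strictly ordered critical point is present, the symmetric state should have negative second variation of $G_\e$ in the antisymmetric direction $(\chi,-\chi)$ for $\gamma>1$, so it cannot be the minimizer and $\Phi^\ast$ is strictly ordered. Then I would invoke uniqueness of strictly ordered positive solutions of the boundary value problem, established through the comparison principle for the competitive (order-reversing) system in the twisted order $(\psi_1,-\psi_2)$ together with a sliding argument anchored by the fixed value $\alpha$ at $0$ and the decay at $\infty$. This yields $\Phi^\ast\in\{\Psi,\Psi'\}$, so $\Psi$ and $\Psi'$ are global minimizers; and since any minimizer is, after sorting, a strictly ordered positive critical point, they are the only ones. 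I expect the genuinely hard step to be this final identification for $\gamma>1$ --- equivalently, taming the indefinite cross term --- where the competitive comparison principle on the unbounded half-line and the behaviour near $x=0$ and $x=\infty$ must be handled carefully in the weighted space $\mathcal{D}_{\alpha}$.
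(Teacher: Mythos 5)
Your multiplicative substitution $\tilde\psi_i=\psi_i v_i$ and the resulting identity are correct (the $(x^2-1)$ terms cancel against the Euler--Lagrange equations \eqref{DWHPVR}, and the quartic terms reorganize exactly into $\tfrac14(u^2+w^2+2\gamma uw)$), but notice what it proves for $0<\gamma\le 1$: that $\Psi$ would be the \emph{unique} global minimizer. Since $G_\e$ is invariant under $\psi_1\leftrightarrow\psi_2$ and $\Psi'=(\psi_2,\psi_1)\in\mathcal{D}_\alpha$ has the same energy with $\Psi'\neq\Psi$, this is a contradiction; hence for $\gamma\le 1$ no strictly ordered critical point exists and the theorem is vacuous there (consistent with Lemma \ref{lem-uniqueness}). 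The entire content of Theorem \ref{DWalpha} is thus the regime $\gamma>1$, and that is exactly where your proposal has a genuine gap. The program you sketch for $\gamma>1$ rests on two unproven claims: (a) that the presence of a strictly ordered critical point forces the symmetric configuration in $\mathcal{D}_\alpha$ to be non-minimizing, and (b) uniqueness of strictly ordered positive solutions of the half-line boundary-value problem via a ``twisted-order comparison plus sliding'' argument. Claim (b) is not available here: sliding needs translation invariance, which the harmonic potential $x^2$ destroys, and uniqueness of positive solutions of this non-autonomous competitive system is precisely the type of statement the authors themselves cannot establish (it is why non-existence of domain walls for $\gamma\in(1,\gamma_0(\e))$ is left open). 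So for $\gamma>1$ your plan reduces the theorem to statements at least as hard as the theorem itself.

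The paper closes this gap with a single different choice of weight in the Hardy-type decomposition, and it is worth internalizing. Instead of factoring the competitor through the components $\psi_i$, factor the \emph{perturbation} through the difference: write $\tilde\Psi=\xi\tilde\Phi$ with $\xi:=\psi_1-\psi_2$, which by hypothesis is positive on $(0,\infty)$, vanishes at $x=0$, and---this is the key---solves the \emph{linear} equation \eqref{xi-eq}. Integration by parts plus \eqref{xi-eq} then yields, for every admissible perturbation with $\tilde\psi_1(0)=\tilde\psi_2(0)=0$,
\begin{align*}
G_\e(\Psi+\tilde\Psi)-G_\e(\Psi) = \int_0^\infty \Big[ \, &\e^2\xi^2(\tilde\phi_1')^2+\e^2\xi^2(\tilde\phi_2')^2
+\tfrac12\big(2\psi_1\tilde\psi_1+2\psi_2\tilde\psi_2+\tilde\psi_1^2+\tilde\psi_2^2\big)^2 \\
&+(\gamma-1)\big\{\psi_1\psi_2(\tilde\psi_1+\tilde\psi_2)^2+(\psi_1\tilde\psi_2+\psi_2\tilde\psi_1+\tilde\psi_1\tilde\psi_2)^2\big\}\Big]\,dx,
\end{align*}
so the troublesome coupling term is itself a sum of squares. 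The excess is therefore non-negative precisely when $\gamma\ge 1$---the regime your identity cannot reach---and the equality case ($\tilde\Phi$ constant plus vanishing of the squares) forces $\tilde\Psi=0$ or $\tilde\Psi=(\psi_2-\psi_1,\psi_1-\psi_2)$, identifying the minimizers as exactly $\Psi$ and $\Psi'$. No existence theory, rearrangement, polarization, maximum principle, or ODE uniqueness enters. The moral is that your substitution diagonalizes the single-component convexity and leaves the coupling indefinite, whereas the substitution through $\xi$ is tailored so that the coupling itself becomes manifestly non-negative.
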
 

Next, we study bifurcations of the steady states. The exceptional case $\gamma = 1$ in Theorem \ref{theorem-variational} is special since the nonlinear system (\ref{DWHPVR}) with $\gamma = 1$ has an additional 
rotational symmetry which results in the existence of the following one-parameter family of solutions:
\begin{equation}
\label{rotating-state}
\psi_1(x) = \cos \theta \; \eta_{\e}(x), \quad \psi_2(x) = \sin \theta \; \eta_{\e}(x),
\end{equation}
where $\theta \in [0,\frac{\pi}{2}]$ is required for positivity of solutions. The family of solutions (\ref{rotating-state})  recovers 
(S1) if $\theta = 0$ or $\theta = \frac{\pi}{2}$ and (S2) if $\theta = \frac{\pi}{4}$. The following theorem suggests that the domain wall state (S3)
does not bifurcate from the one-parameter family (\ref{rotating-state}), 
in agreement with Theorem \ref{theorem-domain-walls}.

\begin{theorem}
For small $\e > 0$, the one-parameter family
(\ref{rotating-state}) for $\gamma = 1$ is only continued for
$\gamma \neq 1$ either as the uncoupled states (S1) or as the
symmetric state (S2). 
\label{theorem-bifurcation}
\end{theorem}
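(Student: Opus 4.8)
The plan is to read $\gamma=1$ as a symmetry-breaking bifurcation and to carry out a Lyapunov--Schmidt reduction of (\ref{DWHPVR}) along the degenerate circle (\ref{rotating-state}). Let $F(\Psi,\gamma)$ denote the left-hand side of (\ref{DWHPVR}), viewed as a smooth map from the operator domain $\mathcal{X}=\{\Psi:-\e^2\Psi''+x^2\Psi\in L^2(\R;\R^2)\}$ into $L^2(\R;\R^2)$; the cubic terms are smooth on $\mathcal{X}$ because $\mathcal{X}\hookrightarrow L^\infty$ with decay. At $\gamma=1$ the nonlinearity depends only on $\psi_1^2+\psi_2^2$, so $F(\cdot,1)$ is $SO(2)$-invariant and $F(\Psi_0(\theta),1)=0$ for the whole family $\Psi_0(\theta)=\eta_\e(\cos\theta,\sin\theta)$ of (\ref{rotating-state}).

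First I would pin down the linearization $\mathcal{L}:=D_\Psi F(\Psi_0(\theta),1)$. Writing $a=\cos\theta\,\eta_\e$, $b=\sin\theta\,\eta_\e$ and using $a^2+b^2=\eta_\e^2$, its matrix potential is $\mathcal{M}\,I+2\eta_\e^2 P_\theta$, where $\mathcal{M}=-\e^2\partial_x^2+x^2-1+\eta_\e^2$ and $P_\theta$ is the rank-one projection onto $(\cos\theta,\sin\theta)$. Since $P_\theta$ is a constant matrix it commutes with $\partial_x^2$, so $\mathcal{L}$ decouples into $\mathcal{L}_+=\mathcal{M}+2\eta_\e^2$ on the range of $P_\theta$ and $\mathcal{L}_-=\mathcal{M}$ on its complement, i.e.\ on $(-\sin\theta,\cos\theta)$. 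By (\ref{T1}) we have $\mathcal{L}_-\eta_\e=0$, and since $\eta_\e>0$ it is the ground state, so $0$ is the simple lowest eigenvalue of $\mathcal{L}_-$; then $\mathcal{L}_+=\mathcal{L}_-+2\eta_\e^2>0$ is invertible. Both operators have compact resolvent, so $\mathcal{L}$ is self-adjoint and Fredholm of index $0$ with $\ker\mathcal{L}=\mathrm{span}\{\partial_\theta\Psi_0(\theta)\}$, the tangent to the orbit.

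Next I would run the reduction at $\gamma=1+\mu$, exploiting the gradient structure. The reduction produces a scalar function $W(\theta,\mu)=G_\e(\Psi(\theta,\mu))$, where $\Psi(\theta,\mu)$ solves the auxiliary (range) equation with $\Psi(\theta,0)=\Psi_0(\theta)$, and whose critical points in $\theta$ are in bijection with solutions of (\ref{DWHPVR}) near the orbit. Because $G_1$ is rotation invariant, $W(\theta,0)$ is constant, so $\partial_\theta W(\theta,0)\equiv0$; since the only $\gamma$-dependent term in (\ref{GPU}) is $(\gamma-1)\psi_1^2\psi_2^2$, the envelope theorem gives $\partial_\mu W(\theta,0)=\tfrac12\int_\R\eta_\e^4\cos^2\theta\sin^2\theta\,dx=\tfrac18\|\eta_\e\|_{L^4}^4\sin^2(2\theta)=:\Phi(\theta)$. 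Hence the bifurcation equation factors as $\partial_\theta W(\theta,\mu)=\mu\big(\Phi'(\theta)+O(\mu)\big)$ with $\Phi'(\theta)=\tfrac14\|\eta_\e\|_{L^4}^4\sin(4\theta)$.

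Finally I would solve $\Phi'(\theta)+O(\mu)=0$. On $[0,\tfrac\pi2]$ the zeros of $\Phi'$ are exactly $\theta\in\{0,\tfrac\pi4,\tfrac\pi2\}$ and $\Phi''(\theta)=\|\eta_\e\|_{L^4}^4\cos(4\theta)\neq0$ there, so each is simple; the implicit function theorem then gives, for each, a unique continued branch $\theta(\mu)$ for small $\mu\neq0$ and no others near the orbit. Matching with the $\gamma$-families that exist for all $\gamma$ identifies these branches as the uncoupled states (\ref{uncoupled-state}) at $\theta=0,\tfrac\pi2$ and the symmetric state (\ref{positive-state}) at $\theta=\tfrac\pi4$, which proves the claim. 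The main obstacle is the reduction itself: one must justify the smooth solvability of $\Psi(\theta,\mu)$, extract the factor $\mu$ cleanly out of an identically degenerate family (the whole circle solves the problem at $\mu=0$, whence $\partial_\theta W(\cdot,0)\equiv0$), and, crucially, keep the complement of $\ker\mathcal{L}$ uniformly invertible over the range of $\gamma$ used. The delicate point is that the lowest odd eigenvalue of the antisymmetric block at (S2) is positive at $\gamma=1$ but vanishes at the domain-wall threshold $\gamma_0(\e)$ of Theorem \ref{theorem-domain-walls}, with $\gamma_0(\e)\to1$ as $\e\to0$; this is where the smallness of $\e$ and the cited refined properties of $\eta_\e$ enter, and it forces $\gamma$ to be confined to a neighborhood of $1$ below $\gamma_0(\e)$, so that the domain wall appears as a separate secondary bifurcation off (S2) rather than as a continuation of (\ref{rotating-state}).
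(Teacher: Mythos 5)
Your proposal is correct and follows essentially the same route as the paper: a Lyapunov--Schmidt reduction along the degenerate circle (\ref{rotating-state}), with the same kernel $\eta_\e(-\sin\theta,\cos\theta)$ and the same bifurcation function $\tfrac14\|\eta_\e\|_{L^4}^4\sin(4\theta)$, whose simple zeros at $\theta=0,\tfrac{\pi}{4},\tfrac{\pi}{2}$ yield exactly the branches (S1) and (S2). The only differences are in packaging: you compute the bifurcation function variationally (reduced energy plus envelope theorem) where the paper projects the $\mathcal{O}(\delta)$ residual term $H_{\theta,\delta}$ onto the kernel eigenvector directly, and you justify simplicity of the zero eigenvalue by decoupling the linearization into $L_\pm$ with the ground-state argument, which the paper asserts from rotational invariance and the cited spectral results for $L_\pm$.
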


When $\gamma=1$ and $\e = 0$ (black dot on Fig. \ref{fig-bif}), the energy $J_0$ ceases to impose segregation of the two  components and hence, a natural question is to identify the valid asymptotic behavior of the domain walls in the limit $(\e,\gamma) \to (0,1)$. 
We do so by using the rescaling  
$$
z \mapsto y := z \sqrt{\gamma-1}, \quad  
\Phi(z) = \Theta(y), \quad \e = \mu \sqrt{\gamma - 1}, \quad 
\mbox{\rm and} \quad J_{\e}(\Phi) = \sqrt{\gamma-1} I_{\mu,\gamma}(\Theta),
$$
where
\begin{eqnarray}
\nonumber
I_{\mu,\gamma} (\Theta) &=&  \frac{1}{2} \int_{\mathbb{R}} \left[ \eta_{\e}(\mu \cdot)^2 (\theta_1')^2 + \eta_{\e}(\mu \cdot)^2 (\theta_2')^2
+  \frac{1}{2(\gamma-1)}  \eta_{\e}(\mu \cdot)^4 (\theta_1^2 + \theta_2^2 - 1)^2 \right. \\ 
\label{energy-gamma}
&&  \qquad +  \eta_{\e}(\mu \cdot)^4 \theta_1^2 \theta_2^2 \bigg] \, dy.
\end{eqnarray}
Since $\e = \mu \sqrt{\gamma - 1}$ and $\mu > 0$ is fixed, the formal limit of 
(\ref{energy-gamma}) as $\gamma \to 1^+$ is 
\begin{align}
I_{\mu,1}(\Theta) = 
\begin{cases} 
\frac{1}{2} \int_{\I} \left[ \eta_{0}(\mu \cdot)^2 (\theta_1')^2 + \eta_{0}(\mu \cdot)^2 (\theta_2')^2 + \eta_{0}(\mu \cdot)^4 \theta_1^2 \theta_2^2 \right]  \, dy \; &\hbox{ if } \theta_1^2 + \theta_2^2 =1, \\
+\infty \; &\hbox{ otherwise, }
\end{cases}
\label{energy-gamma-reduced}
\end{align}
where $\I :=(-\frac{1}{\mu}, \frac{1}{\mu})$.

Minimizers of $I_{\mu,1}$ in (\ref{energy-gamma-reduced}) 
are shown on Fig. \ref{fig-3} by dashed line 
together with the numerical approximation of the domain wall 
state shown by the solid line for $\e = 0.1$ and $\gamma = 1.2$. 
Both solutions are shown versus the original coordinate $x$. The agreement implies the relevance of the asymptotic approximation found from $I_{\mu,1}$.

\begin{figure}[htpb!]
	\centering
	\includegraphics[width=8.5cm,height=6.5cm]{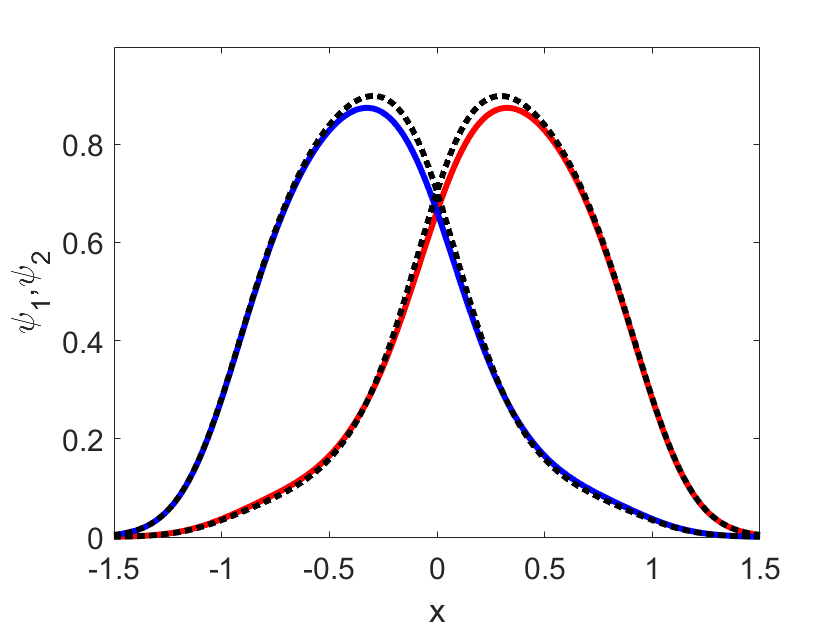}
	\caption{The profile of the domain wall state (solid) versus $x$ 
		and the asymptotic approximation obtained from the minimizers of the energy functional $I_{\mu,1}$ for $\e = 0.1$ and $\gamma = 1.2$.}
	\label{fig-3}
\end{figure}

A rigorous connection between the two energy functionals $I_{\mu,\gamma}$ and $I_{\mu,1}$ given by (\ref{energy-gamma}) and (\ref{energy-gamma-reduced}) 
is established in Lemma \ref{theorem-gamma} below, where $\Gamma$ 
convergence of $I_{\mu,\gamma}$ to $I_{\mu,1}$ is shown for fixed $\mu > 0$.

The following theorem states the existence of a special $\mu_0 \in (0,\infty)$ 
separating the symmetric state (S2) and the domain wall state (S3) 
as minimizers of $I_{\mu,1}$ in the energy space with symmetry $\mathcal{E}_s$. 
The curve $\e = \mu_0 \sqrt{\gamma - 1}$ is shown on Fig. \ref{fig-bif} by 
the magenta dotted line, illustrating a good approximation for the 
bifurcation curve $\gamma = \gamma_0(\e)$.

\begin{theorem}
	\label{theorem-asymptotic}
	There exists $\mu_0 \in (0,\infty)$ such that the symmetric state 
	(S2) is a global minimizer of the energy $I_{\mu,1}$ in $\mathcal{E}_s$ if  $\mu \in [\mu_0,\infty)$ and a saddle point of the energy in $\mathcal{E}_s$ if $\mu \in (0,\mu_0)$. The domain wall states (S3) exist only if $\mu \in (0,\mu_0)$, where they are global minimizers of the energy $I_{\mu,1}$ in $\mathcal{E}_s$.
\end{theorem}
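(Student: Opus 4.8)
The plan is to reduce the constrained vector problem to a scalar problem for a phase variable and then combine a sharp second--variation analysis with one a priori estimate that pins down the bifurcation value $\mu_0$. I would first exploit the pointwise constraint $\theta_1^2+\theta_2^2=1$, which must hold a.e.\ on $\I$ for any $\Theta$ with $I_{\mu,1}(\Theta)<\infty$. On the one--dimensional interval $\I$ a finite--energy circle--valued map admits a lifting $\Theta=(\cos\phi,\sin\phi)$ with $\phi\in H^1_{\rm loc}(\I)$, and the symmetry defining $\mathcal{E}_s$, namely $\theta_1(y)=\theta_2(-y)$, becomes $\phi(y)+\phi(-y)=\tfrac{\pi}{2}$, so that $\phi(0)=\tfrac{\pi}{4}$. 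In these variables
\[
I_{\mu,1}(\Theta)=\frac12\int_\I\Big[\eta_0(\mu y)^2(\phi')^2+\tfrac14\,\eta_0(\mu y)^4\sin^2(2\phi)\Big]\,dy,
\]
the symmetric state (S2) corresponds to $\phi\equiv\tfrac{\pi}{4}$, while a domain wall (S3) is a nonconstant admissible $\phi$. The Euler--Lagrange equation is $-(\eta_0(\mu\cdot)^2\phi')'+\tfrac14\eta_0(\mu\cdot)^4\sin(4\phi)=0$, of which $\phi\equiv\tfrac{\pi}{4}$ is always a solution.

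Second, I would establish by the direct method that $I_{\mu,1}$ attains its minimum over the admissible class for every $\mu>0$: the weighted Dirichlet term controls $\phi'$ in the degenerate weighted $L^2$, the constraint $\phi(0)=\tfrac{\pi}{4}$ fixes the additive constant, and weak lower semicontinuity together with compactness in $L^2_{\rm loc}(\I)$ produce a minimizer $\phi_*$, which is a weak solution of the Euler--Lagrange equation. The whole dichotomy then reduces to deciding whether $\phi_*$ equals $\tfrac{\pi}{4}$ or not. Linearizing at $\phi=\tfrac{\pi}{4}$, writing $\phi=\tfrac{\pi}{4}+\psi$ with $\psi$ \emph{odd}, gives the quadratic form
\[
Q_\mu(\psi)=\frac12\int_\I\eta_0(\mu y)^2(\psi')^2\,dy-\frac12\int_\I\eta_0(\mu y)^4\psi^2\,dy .
\]
Rescaling $x=\mu y$ turns the sign of $Q_\mu$ into the sign of $\mu^2\Lambda_1-1$, where $\Lambda_1\in(0,\infty)$ is the first odd eigenvalue of the singular Sturm--Liouville problem $-[(1-x^2)\chi']'=\Lambda(1-x^2)^2\chi$ on $(-1,1)$; I would obtain $\Lambda_1$ and its eigenfunction $\chi_1$ from the spectral theory of this degenerate problem, the oddness excluding the zero mode and forcing $\Lambda_1>0$. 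Setting $\mu_0:=\Lambda_1^{-1/2}$, for $\mu<\mu_0$ the direction $\psi=\chi_1$ gives $Q_\mu(\chi_1)<0$, so (S2) is a saddle in $\mathcal{E}_s$, whereas for $\mu\ge\mu_0$ one has $Q_\mu\ge0$.

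Finally, the global statements follow from a single a priori estimate. Testing the Euler--Lagrange equation with the odd function $\psi:=\phi_*-\tfrac{\pi}{4}$ and using $\sin(4\phi_*)=-\sin(4\psi)$ yields $\int_\I\eta_0(\mu\cdot)^2(\psi')^2=\tfrac14\int_\I\eta_0(\mu\cdot)^4\,\psi\sin(4\psi)$; the boundary terms drop because the weight vanishes at $\partial\I$. Since $\psi\sin(4\psi)<4\psi^2$ pointwise wherever $\psi\neq0$, while the Rayleigh quotient for odd functions bounds the left side below by $\mu^2\Lambda_1\int_\I\eta_0(\mu\cdot)^4\psi^2$, any nonconstant solution forces $\mu^2\Lambda_1<1$, that is $\mu<\mu_0$. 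Hence for $\mu\ge\mu_0$ the only critical point in $\mathcal{E}_s$ is $\phi\equiv\tfrac{\pi}{4}$, so the minimizer $\phi_*$ is the symmetric state and (S2) is the global minimizer; for $\mu<\mu_0$, (S2) is a saddle and therefore not minimizing, so $\phi_*$ is nonconstant, i.e.\ a domain wall, and the reflection $\phi\mapsto\tfrac{\pi}{2}-\phi$ (equivalently $\psi_1\leftrightarrow\psi_2$) produces the second one.

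I expect the main obstacle to be analytic rather than structural: making the direct method and the $S^1$-valued lifting rigorous in the degenerate weighted space in which $\eta_0(\mu\cdot)^2$ vanishes at $\partial\I$, and setting up the spectral theory that guarantees the first odd eigenvalue $\Lambda_1\in(0,\infty)$ is attained by an eigenfunction $\chi_1$. Once these functional--analytic points are secured, the dichotomy together with the a priori estimate closes the argument cleanly, and the sharpness at $\mu=\mu_0$ is automatic from the strict inequality $\psi\sin(4\psi)<4\psi^2$.
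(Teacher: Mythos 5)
Your proposal is correct and, in its overall architecture, follows the same route as the paper: the same reduction to an angle variable with value $\tfrac{\pi}{4}$ at the origin (the paper writes $(\theta_1,\theta_2)=(\sin u,\cos u)$ and works on the half-line with the Dirichlet condition $\tilde u(0)=0$, which is equivalent to your odd-perturbation formulation on $\mathcal{I}_{\mu}$), the same rescaled second variation, the same degenerate Sturm--Liouville problem $-[(1-x^2)v']'=\nu(1-x^2)^2v$ whose first eigenvalue $\nu_0$ defines $\mu_0=\nu_0^{-1/2}$, and the same use of direct-method existence of a minimizer in the symmetric class to produce the domain wall when the symmetric state is a saddle. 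The one genuine difference is how nonconstant symmetric critical points are excluded for $\mu\geq\mu_0$. The paper does this in two pieces: global minimality of $u\equiv\tfrac{\pi}{4}$ by the pointwise comparison $\sin^2(2\tilde u)\leq 4\tilde u^2$ (an energy inequality needing no existence theory), and non-existence of other critical points by a convexity lemma (Lemma \ref{lemma-gamma-uniqueness}), which bounds $\tfrac{d^2}{dt^2}I_{\mu,1}(u_1+t(u_2-u_1))$ below by the nonnegative quadratic form and rules out two distinct critical points. You instead test the Euler--Lagrange equation with the odd part $\psi=\phi_*-\tfrac{\pi}{4}$, use $\sin(4\phi_*)=-\sin(4\psi)$, the strict inequality $\psi\sin(4\psi)<4\psi^2$ for $\psi\not\equiv 0$, and the odd Rayleigh quotient to conclude $\mu^2\nu_0<1$; this gives uniqueness of critical points, global minimality, and sharpness at $\mu=\mu_0$ in one stroke. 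Both arguments are sound, but note what each costs: the paper's convexity route never integrates by parts against the solution itself, whereas your identity requires the boundary terms $\eta_0(\mu\cdot)^2\psi'\psi$ to vanish at $\partial\mathcal{I}_{\mu}$ where the weight degenerates. This is not automatic from the vanishing of the weight alone (finite energy permits logarithmic behavior a priori); it is exactly the point the paper settles with its Frobenius analysis at $x=1$, showing finite-energy solutions satisfy $|v(1)|<\infty$ and $v'(1)=0$, and you should incorporate that regularity statement (or an equivalent decay estimate for $\eta_0^2\psi'$ obtained by integrating the equation from the endpoint) to make your testing step airtight.
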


If we wish to set $\mu = 0$ in $I_{\mu,1}$, we obtain the limiting energy 
for the energy functional $J_{0}$ in the sense of $\Gamma$-convergence:
\begin{align}
I_{0,1}(\Theta) = 
\begin{cases} 
\frac{1}{2} \int_{\mathbb{R}} \left[ (\theta_1')^2 + (\theta_2')^2 + \theta_1^2 \theta_2^2 \right]  \, dy \quad &\hbox{ if } \theta_1^2 + \theta_2^2 =1, \\
+\infty \quad &\hbox{ otherwise. }
\end{cases}
\label{energy-gamma-limiting}
\end{align}
The domain wall state (S3) can be found explicitly as a minimizer of $I_{0,1}$ 
since the symmetric state (S2) corresponding to $\theta_1 = \theta_2 = \frac{1}{\sqrt{2}}$ is a saddle point by Theorem \ref{theorem-asymptotic}. Using the representation $(\theta_1,\theta_2) = (\sin(u),\cos(u))$
on the circle $\theta_1^2 + \theta_2^2 = 1$ yields $I_{0,1}(\Theta)$ 
rewritten as $I_{0,1}(u)$ in the form:
$$
I_{0,1}(u) = \frac{1}{2} \int_{\RR} \left[ (u')^2 + \frac{1}{4} \sin^2(2u) \right] dy,
$$
with the corresponding Euler--Lagrange equation given by 
\begin{equation}
\label{EL}
-u'' + \frac{1}{4} \sin(4u) = 0.
\end{equation}
The domain wall state in $\mathcal{E}_s$ corresponds to the restriction $\theta_1(y) = \theta_2(-y)$ for $y \in \mathbb{R}$. 
In particular, $\theta_1(0) = \theta_2(0) = \frac{1}{\sqrt{2}}$ which yields the condition $u(0) = \frac{\pi}{4}$. 
The boundary conditions $\theta_1(y) \to 0$ as $y \to -\infty$ 
and $\theta_1(y) \to 1$ as $y \to +\infty$ yields 
the conditions $u(y) \to 0$ as $y \to -\infty$ and $u(y) \to \frac{\pi}{2}$ as $y \to +\infty$. The unique solution of the differential equation 
(\ref{EL}) satisfying these requirements is found in the form:
\begin{equation}
\label{EL-solution}
u(y) = \frac{\pi}{2} - \arctan(e^{-y}), \quad y \in \RR.
\end{equation}
This solution is the asymptotic approximation as $\gamma \to 1$
of the domain walls of the coupled system (\ref{hc}) without the harmonic potential. This explicit approximation was not obtained
in \cite{ABCP}.

\subsection{Numerical approximations of domain walls}

By Theorem \ref{theorem-variational}, the domain wall states (S3) are not global minimizers of $G_\e$ in $\mathcal{E}$ in their range of existence. In \cite{ABCP}, domain walls were obtained variationally as heteroclinics to specific limits at infinity. Another approach to constructing domain walls is to consider a problem in the energy space with a spatial symmetry 
denoted by $\mathcal{E}_s$ and defined in (\ref{energy-space}).
By Theorem \ref{theorem-domain-walls}, 
the domain wall states (S3) are global minimizers of $G_{\e}$ in $\mathcal{E}_s$ for $\gamma \in (\gamma_0(\e),\infty)$.

The effective numerical method for computation of domain walls 
is developed by working in the energy space $\mathcal{D}_{\alpha}$ 
on the half-line defined in (\ref{energy-half-line}) subject to the 
Dirichlet condition $\psi_1(0) = \psi_2(0) = \alpha$ with $\alpha > 0$.
The symmetric state in $\mathcal{D}_{\alpha}$ corresponds to the reduction $\psi_1(x) = \psi_2(x)$ for $x > 0$. In addition, 
there may exist two non-symmetric states in $\mathcal{D}_{\alpha}$: one satisfies $\psi_1(x) > \psi_2(x) > 0$ for $x  > 0$ and the other one is obtained by the transformation $\psi_1 \leftrightarrow \psi_2$. By Theorem \ref{DWalpha}, if the non-symmetric states exist, they are global minimizers of the energy $G_{\e}$ in $\mathcal{D}_{\alpha}$.

Non-symmetric minimizers of $G_{\e}$ in $\mathcal{D}_{\alpha}$ depend on $\alpha$ and recover the domain wall states (S3) on the half-line if 
the split function $S_{\e}(\alpha)$ vanishes, where 
$$
S_{\e}(\alpha) := \psi_1'(0^+) + \psi_2'(0^+)
$$
and the derivatives are well-defined because minimizers of $G_{\e}$ in $\mathcal{D}_{\alpha}$ are smooth on $\mathbb{R}_+$. Another criterion to identify the domain wall state (S3) as the global minimizers of $G_{\e}$ in $\mathcal{E}_s$ is to plot the energy level 
$G_{\e}(\Psi_{\alpha})$  versus $\alpha$, where $\Psi_{\alpha}$ is the  non-symmetric minimizer of $G_{\e}$ in $\mathcal{D}_{\alpha}$, and to find the values of $\alpha$ for which $G_{\e}(\Psi_{\alpha})$ is minimal. 

We have implemented both ways to find the domain wall states (S3) numerically. Non-symmetric minimizers of $G_{\e}$ in $\mathcal{D}_{\alpha}$ were approximated numerically with a second-order central-difference relaxation method. 
Fig. \ref{fig-4} shows how $S_{\e}(\alpha)$ and $G_{\e}(\Psi_{\alpha})$ 
depend on $\alpha$ for $\e = 0.1$ and $\gamma = 3$. Zero of $S_{\e}$ 
corresponds to the minimum of $G_{\e}(\Psi_{\alpha})$ and appears 
at $\alpha_0 \approx 0.5$. The corresponding domain wall state 
extended from $\mathcal{D}_{\alpha_0}$ 
by the symmetry condition in space $\mathcal{E}_s$ is shown on Fig. \ref{fig-2}. 

\begin{figure}[htpb!]
	\centering
	\includegraphics[width=7cm,height=5cm]{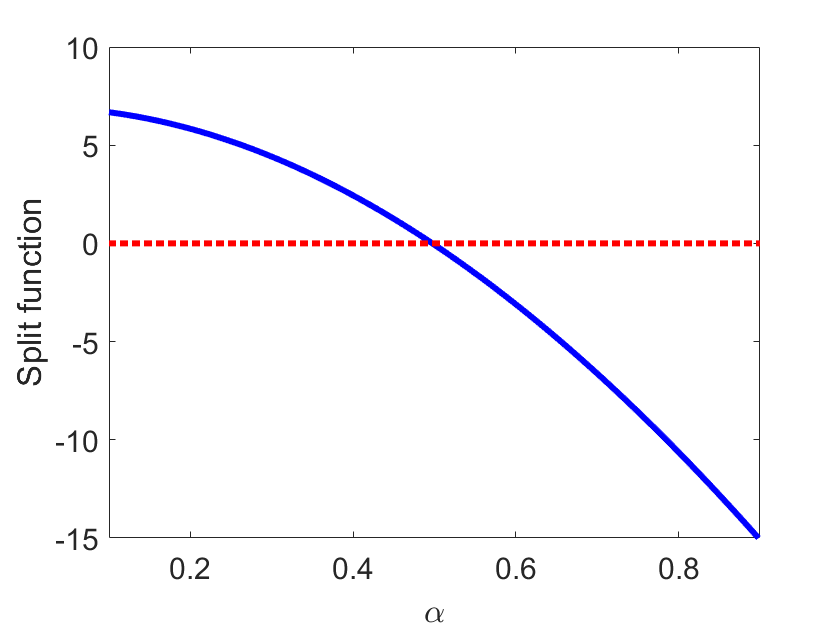}
	\includegraphics[width=7cm,height=5cm]{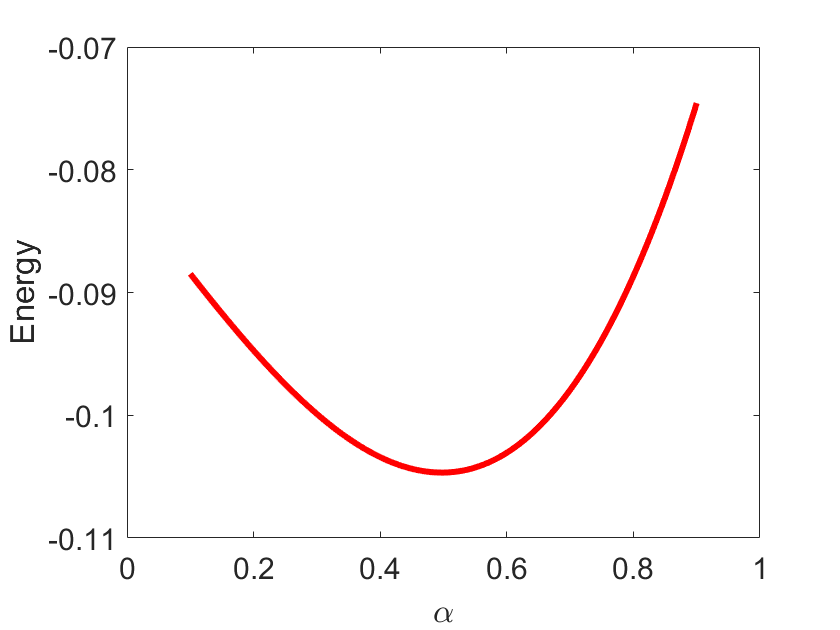}
	\caption{Plots of $S_{\e}(\alpha)$ (left) and $G_{\e}(\Psi_{\alpha})$ (right) versus $\alpha$ for $\e = 0.1$ and $\gamma = 3$. }
	\label{fig-4}
\end{figure} 

Performing numerical approximations in the range of values of $\gamma$, 
we can obtain a family of domain wall states by using this method. Fig. \ref{fig-5} shows the dependence of the optimal value of $\alpha = \psi_1(0) = \psi_2(0)$ found from the root of $S_{\e}$ versus $\gamma$ by solid line. Iterations of the numerical method do not converge near $\gamma = 1$ 
since the domain wall states are minimizers of $G_{\e}$ in $\mathcal{E}_s$ only for $\gamma \in (\gamma_0(\e),\infty)$ by Theorem \ref{theorem-domain-walls} with $\gamma_0(\e) > 1$ for $\e > 0$. The dashed line shows the linear interpolation between the limiting value $\alpha_0 = \frac{1}{\sqrt{2}}$ found from the limiting energy (\ref{energy-gamma-reduced})
shown by the blue dot and the last numerical data point at $\gamma = 1.2$. 

\begin{figure}[htpb!]
	\centering
	\includegraphics[width=8cm,height=6cm]{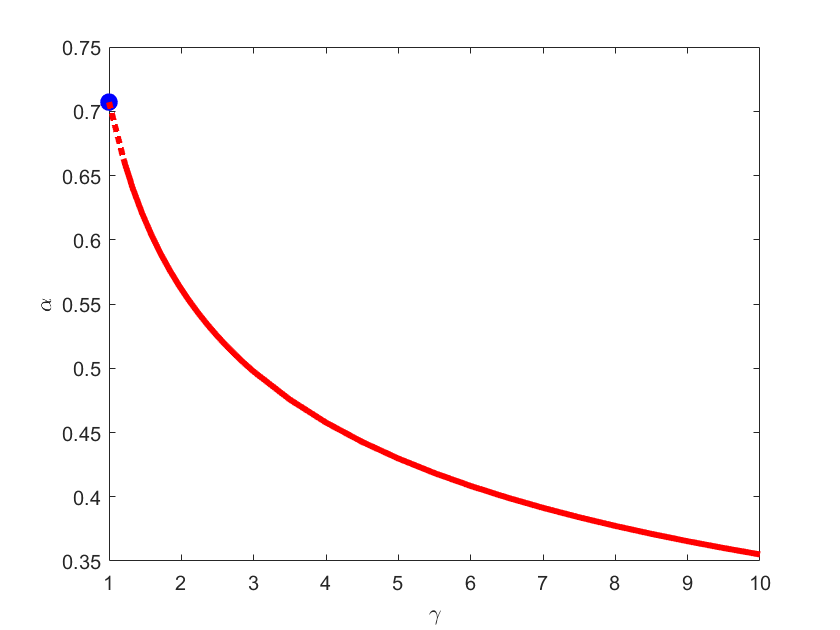}
	\caption{Dependence of the optimal value of $\alpha$ versus $\gamma$ for $\e = 0.1$. The blue dot shows the limiting value $\alpha_0 = \frac{1}{\sqrt{2}}$ as $\gamma \to 1^+$ found from the limiting energy (\ref{energy-gamma-reduced}). }
	\label{fig-5}
\end{figure}

Fig. \ref{fig-3} (solid line) shows the domain wall state at 
$\e = 0.1$ and $\gamma = 1.2$. 
In order to find the minimizer of $I_{\mu,1}$ in (\ref{energy-gamma-reduced}), we write $(\theta_1,\theta_2) = (\sin(u),\cos(u))$ and solve the corresponding Euler--Lagrange equation 
\begin{equation}
\label{EL-numerics}
- \frac{d}{dy} \left[ \eta_0(\mu y)^2 \frac{du}{dy} \right] + \frac{1}{4} \eta_0(\mu y)^4 \sin(4u) = 0, \quad y \in (\mu^{-1},\mu^{-1}).
\end{equation}
The Euler--Lagrange equation (\ref{EL-numerics}) is solved numerically 
subject to the conditions $u(0) = \frac{\pi}{4}$ and $u'(y) > 0$ for $y \in (-\mu^{-1},\mu^{-1})$. The solution is concatenated with the constant solution $u(y) = u(\mu^{-1})$ for $y \in (\mu^{-1},\infty)$ 
and $u(y) = u(-\mu^{-1})$ for $y \in (-\infty,-\mu^{-1})$.
The second-order central-difference relaxation method is used to obtain the numerical approximation of $u(y)$, which is then plotted versus the original coordinate $x$.

Fig. \ref{fig-1} is obtained by using the same relaxation method for the stationary equation (\ref{T1}) on $\mathbb{R}_+$ subject to the conditions 
$\eta_{\e}'(0) = 0$ and $\eta_{\e}(x) \to 0$ as $x \to \infty$.

The bifurcation curve $\gamma= \gamma_0(\e)$ on Fig. \ref{fig-bif} is obtained by 
numerical approximation of eigenvalues of the Hessian operator at the symmetric 
state (S2) in the proof of Theorem \ref{theorem-domain-walls}. Eigenvalues of the Dirichlet problem for the Schr\"{o}dinger operator 
\begin{equation}
\label{Schr-operator}
L_{\gamma} = -\e^2 \partial_x^2 + x^2 - 1 + \eta_{\e}^2 + 2 \frac{1-\gamma}{1+\gamma} \eta_{\e}^2, \quad x \in \mathbb{R}_+
\end{equation}
are shown on Fig. \ref{fig-eig}. They are monotonically decreasing in $\gamma$ 
and the crossing point of the first (smallest) eigenvalue of $L_{\gamma}$ gives the bifurcation curve $\gamma = \gamma_0(\e)$. The same second-order central-difference method is applied to approximate derivatives of the Schr\"{o}dinger operator $L_{\gamma}$ on $\mathbb{R}_+$ subject to the Dirichlet condition at $x = 0$. The spatial domain is truncated on $[0,3]$ similar to Fig. \ref{fig-1}.

The magenta dotted curve on Fig. \ref{fig-bif} is obtained from the first (smallest) eigenvalue $\nu_0 := \mu_0^{-2}$ of the boundary-value problem 
\begin{equation}
\label{limit-Schr-op}
\left. \begin{array}{l} 
- \frac{d}{dx} \left[ (1-x^2) \frac{dv}{dx} \right] = \nu (1-x^2)^2 v, \quad 0 < x < 1, \\
v(0) = 0, \quad v'(1) = 0, \end{array} \right\} 
\end{equation}
arising in the proof of Theorem \ref{theorem-asymptotic}. 
With the transformation $x = \tanh(\xi)$ which maps $[0,1]$ to $\mathbb{R}_+$, 
the boundary-value problem (\ref{limit-Schr-op}) is rewritten in the form
\begin{equation}
\label{limit-Schr-op-infty}
\left. \begin{array}{l} 
-w''(\xi) = \nu\, {\rm sech}^6(\xi) w(\xi), \quad \xi \in \mathbb{R}_+, \\ 
w(0) = 0, \quad |w'(\xi)| \to 0 \;\; \mbox{\rm as} \;\; \xi \to \infty. \end{array} \right\} 
\end{equation}
The spatial domain of this boundary-value problem is truncated on $[0,10]$ resulting in the lowest eigenvalue at $\nu_0 \approx 7.29$. The asymptotic approximation on Fig. \ref{fig-bif} corresponds to $\gamma = 1 + \nu_0 \e^2$. 

\begin{figure}[htpb!]
	\centering
	\includegraphics[width=8cm,height=6cm]{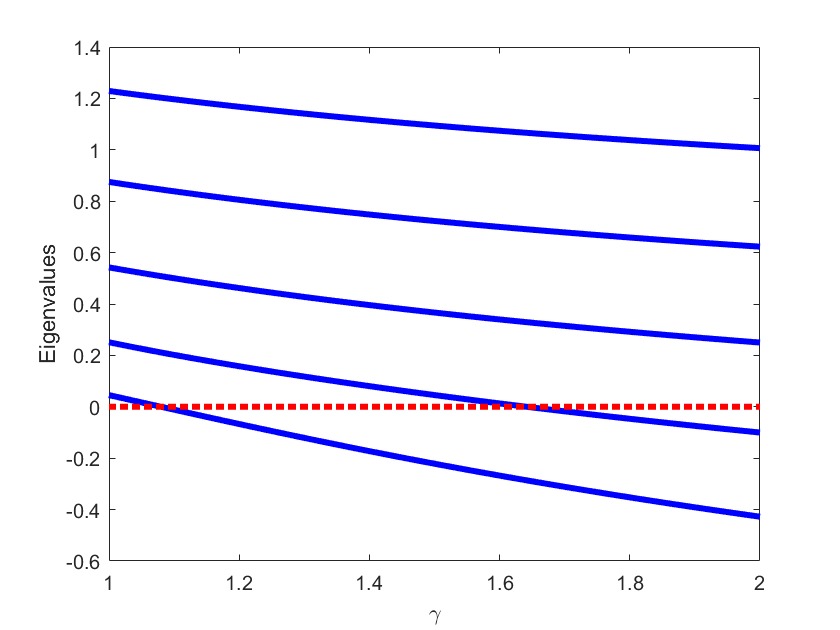}
	\caption{Eigenvalues of the Dirichlet problem for $L_{\gamma}$ on $\mathbb{R}_+$ versus $\gamma$.}
	\label{fig-eig}
\end{figure}

\subsection{Organization of the paper}

The remainder of this paper contains proofs of the main theorems. 

Section \ref{sec-2} gives the proof of the splitting formula $G_{\e}(\Psi) = F_{\e}(\eta_\e) + \e J_{\e}(\Phi)$ (Lemma \ref{sepen}) and 
the proof of Theorem \ref{gconv1} about $\Gamma$ convergence of 
$J_{\e}$ to $J_0$. 

Section \ref{classification} gives proofs 
of Theorems \ref{theorem-variational} and \ref{theorem-domain-walls} and  identify the parameter ranges for which the uncoupled, symmetric, and domain wall states are the global minimizers of the energy $G_\e$ in the energy space $\mathcal{E}$ and in the energy space with symmetry $\mathcal{E}_s$. 
The bifurcation at $\gamma = \gamma_0(\e)$ and the Schr\"{o}dinger operator 
$L_{\gamma}$ in (\ref{Schr-operator}) arise in the proof of Theorem \ref{theorem-domain-walls}. Existence of minimizers of $G_{\e}$ in $\mathcal{E}$ is given by Lemma \ref{DWalphaExist}. Lemma \ref{lemma-property} describes 
properties of the domain wall states (S3) as minimizers of $G_{\e}$ in $\mathcal{E}_s$. Lemma \ref{lem-uniqueness} gives uniqueness of the symmetric state (S2) as the positive solution of the coupled system (\ref{DWHPVR}) if $\gamma \in (0,1)$. Finally, we prove Theorem \ref{DWalpha} about the non-symmetric minimizers of $G_{\e}$ in $\mathcal{D}_{\alpha}$ with fixed $\alpha > 0$.

Section \ref{sec-bifurcations} investigates bifurcations from the one-parameter family of solutions (\ref{rotating-state}) at $\gamma=1$ establishing the proof of Theorem \ref{theorem-bifurcation}.  Lemma \ref{theorem-gamma} gives $\Gamma$-convergence of the energy functional $I_{\mu,\gamma}$ to $I_{\mu,1}$ as $\gamma \to 1^+$. The bifurcation $\mu = \mu_0$ and the boundary-value problem (\ref{limit-Schr-op}) arise in the proof of Theorem \ref{theorem-asymptotic} from analysis of $I_{\mu,1}$ at the symmetric state. Lemma \ref{lemma-gamma-uniqueness} gives uniqueness of the symmetric state 
if $\mu \in [\mu_0,\infty)$. Lemma \ref{lemma-normal-form} reports computations of the normal form for bifurcation of domain wall states and shows that the domain wall states bifurcate to $\mu < \mu_0$, where they are characterized as minimizers of $I_{\mu,1}$ if $\mu \in (0,\mu_0)$. 

As we have pointed out previously, the proof that the domain wall states (S3) only exist if $\gamma > \gamma_0(\e)$, where they are characterized as minimizers of $G_{\e}$, is open for further studies.

\vskip.1in{\bf Acknowledgements.}
The work of A.C. was partially supported by a grant from the Simons foundation \# 426318. V. S. acknowledges support by Leverhulme grant RPG-2018-438. 
D. P. is supported by the NSERC Discovery grant. 
\vskip.1in

\section{$\Gamma$-convergence of the energy functionals}
\label{sec-2}

The following splitting lemma was pioneered in \cite{LM} and later explored in \cite{IgMi1,IgMi2} in the context of vortices in $\RR^2$. We adopt it to the case of steady states that include the domain wall states in $\RR$.

\begin{lemma}\label{sepen} 
	Let $\Psi\in  \mathcal{E}$ and define $\Phi := \Psi/\eta_\e$, where $\eta_\e \in H^1(\RR;\mathbb{R}) \cap L^{2,1}(\RR;\mathbb{R})$ is the positive solution of \eqref{T1}. Then, $G_{\e}(\Psi) = F_{\e}(\eta_\e) + \e J_{\e}(\Phi)$, where $F_{\e}$ is given by (\ref{energy-F}) and 
\begin{equation}
\label{energy-J}
J_\e (\Phi) = \frac{1}{2\e} \int_\R \left[ \e^2 \eta_\e^2 (\phi_1')^2 + \e^2 \eta_\e^2 (\phi_2')^2 + \frac12 \eta_\e^4 (\phi_1^2 + \phi_2^2 -1)^2 + (\gamma-1) \eta_\e^4 \phi_1^2 \phi_2^2 \right]\, dx.
\end{equation}
\end{lemma}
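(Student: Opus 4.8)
The plan is to substitute $\psi_j = \eta_\e \phi_j$ directly into $G_\e$, expand, and reorganize the integrand into the sum of the $F_\e(\eta_\e)$ integrand and the $\e J_\e(\Phi)$ integrand. Three ingredients drive the computation: the product rule $\psi_j' = \eta_\e' \phi_j + \eta_\e \phi_j'$, a single integration by parts on the resulting cross term, and the Euler--Lagrange equation \eqref{T1} for $\eta_\e$. I first note that no issue with infinite values arises: for $\Psi \in \mathcal{E} = H^1(\RR;\RR^2)\cap L^{2,1}(\RR;\RR^2)$ the embedding $H^1(\RR)\hookrightarrow L^\infty\cap L^4$ together with $x\Psi \in L^2$ makes every term in $G_\e(\Psi)$ integrable, so $G_\e(\Psi)$ is finite and the asserted identity is an equality of finite numbers.

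For the kinetic part, the product rule gives
\[ \e^2 \big[(\psi_1')^2 + (\psi_2')^2\big] = \e^2 (\eta_\e')^2 (\phi_1^2+\phi_2^2) + \e^2 \eta_\e \eta_\e' \big(\phi_1^2+\phi_2^2\big)' + \e^2 \eta_\e^2 \big[(\phi_1')^2 + (\phi_2')^2\big]. \]
The third term is already the kinetic part of $2\e J_\e(\Phi)$. In the first term I write $\phi_1^2+\phi_2^2 = 1 + (\phi_1^2+\phi_2^2-1)$, and in the middle term I use $(\phi_1^2+\phi_2^2)' = (\phi_1^2+\phi_2^2-1)'$ before integrating by parts; the latter yields $-\int \e^2(\eta_\e \eta_\e')'(\phi_1^2+\phi_2^2-1)\,dx = -\int \e^2\big[(\eta_\e')^2 + \eta_\e\eta_\e''\big](\phi_1^2+\phi_2^2-1)\,dx$. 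The $(\eta_\e')^2$-contribution here cancels exactly against the $(\phi_1^2+\phi_2^2-1)$ piece split off from the first term, leaving the surviving term $-\int \e^2 \eta_\e\eta_\e''(\phi_1^2+\phi_2^2-1)\,dx$ together with the stray $\int \e^2(\eta_\e')^2\,dx$.

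I then eliminate the second derivative via \eqref{T1} in the form $\e^2 \eta_\e\eta_\e'' = (x^2-1)\eta_\e^2 + \eta_\e^4$, so that $-\int \e^2\eta_\e\eta_\e''(\phi_1^2+\phi_2^2-1)\,dx$ contributes $-(x^2-1)\eta_\e^2(\phi_1^2+\phi_2^2) + (x^2-1)\eta_\e^2 - \eta_\e^4(\phi_1^2+\phi_2^2) + \eta_\e^4$ to the integrand. Two cancellations and one completion of the square now finish the bookkeeping. First, the piece $-(x^2-1)\eta_\e^2(\phi_1^2+\phi_2^2)$ cancels the potential term $(x^2-1)(\psi_1^2+\psi_2^2) = (x^2-1)\eta_\e^2(\phi_1^2+\phi_2^2)$ of $G_\e$, leaving the pure term $(x^2-1)\eta_\e^2$. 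Second, combining with the quartic term $\tfrac12(\psi_1^2+\psi_2^2)^2 = \tfrac12\eta_\e^4(\phi_1^2+\phi_2^2)^2$ of $G_\e$ gives $\eta_\e^4 - \eta_\e^4(\phi_1^2+\phi_2^2) + \tfrac12\eta_\e^4(\phi_1^2+\phi_2^2)^2 = \tfrac12\eta_\e^4 + \tfrac12\eta_\e^4(\phi_1^2+\phi_2^2-1)^2$. The pure-$\eta_\e$ remainder $\e^2(\eta_\e')^2 + (x^2-1)\eta_\e^2 + \tfrac12\eta_\e^4$ is precisely twice the $F_\e$ integrand, while the $\Phi$-dependent remainder $\e^2\eta_\e^2[(\phi_1')^2+(\phi_2')^2] + \tfrac12\eta_\e^4(\phi_1^2+\phi_2^2-1)^2 + (\gamma-1)\eta_\e^4\phi_1^2\phi_2^2$ is precisely twice the $\e J_\e$ integrand, yielding $G_\e(\Psi) = F_\e(\eta_\e) + \e J_\e(\Phi)$.

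The one genuine technical point, and the step I expect to require care, is the vanishing of the boundary term in the integration by parts, namely $\e^2\eta_\e\eta_\e'(\phi_1^2+\phi_2^2-1)$ evaluated at $\pm\infty$. Writing $\eta_\e(\phi_1^2+\phi_2^2-1) = (\psi_1^2+\psi_2^2)/\eta_\e - \eta_\e$, this boundary quantity equals $\e^2(\eta_\e'/\eta_\e)(\psi_1^2+\psi_2^2) - \e^2\eta_\e\eta_\e'$. The second piece tends to $0$ since $\eta_\e\to 0$ and $\eta_\e'$ is bounded, while for the first piece one invokes the Gaussian-type decay of $\eta_\e$ (see \eqref{P-3} and \cite{GaPe}), which forces $\eta_\e'/\eta_\e = O(|x|)$ at infinity; since $\Psi\in H^1\cap L^{2,1}$ implies $x(\psi_1^2+\psi_2^2)\in W^{1,1}(\RR)$ and hence $x(\psi_1^2+\psi_2^2)\to 0$, the product vanishes. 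This is most cleanly justified by performing the integration by parts on $[-R,R]$ and sending $R\to\infty$ along a sequence for which the boundary contribution tends to zero.
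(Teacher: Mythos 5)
Your algebraic core is correct and is in substance the same computation as the paper's: expand $\psi_j=\eta_\e\phi_j$, integrate the cross term $\e^2\eta_\e\eta_\e'(\phi_1^2+\phi_2^2-1)'$ by parts, eliminate $\eta_\e''$ through \eqref{T1}, and recombine via $\eta_\e^4-\eta_\e^4 s+\tfrac12\eta_\e^4s^2=\tfrac12\eta_\e^4+\tfrac12\eta_\e^4(s-1)^2$ with $s=\phi_1^2+\phi_2^2$, which is exactly the paper's ``notice that'' identity. Where you genuinely diverge is the passage from compactly supported (or finite-interval) data to general $\Psi\in\mathcal{E}$, and there your argument has a gap: the claim that the Gaussian-type decay of $\eta_\e$ ``forces'' $\eta_\e'/\eta_\e=O(|x|)$ at infinity. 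Decay bounds on a function do not control its logarithmic derivative (a function can decay like $e^{-x^2}$ while its logarithmic derivative oscillates or blows up faster than any polynomial), and neither estimate in \eqref{P-3} helps: dividing $\|\eta_\e'\|_\infty\le C\e^{-1/3}$ by the super-exponentially small $\eta_\e(x)$ gives an exponentially large quotient. Nor does the ``along a sequence'' fallback rescue the step: to produce $R_n\to\infty$ with vanishing boundary contribution you still need some pointwise or integral control of $(\eta_\e'/\eta_\e)(\psi_1^2+\psi_2^2)$, i.e.\ you are back to needing a bound on $\eta_\e'/\eta_\e$.

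The bound you want is in fact true, but it requires an ODE argument rather than a decay citation: for $|x|>1$ the positive solution $\eta_\e$ is convex and decreasing, and the Riccati variable $w:=-\e\eta_\e'/\eta_\e>0$ satisfies $\e w'=w^2-(x^2-1+\eta_\e^2)$; if $w$ ever exceeded $2|x|$, the quadratic growth would force $w$ to blow up at a finite point, producing a zero of $\eta_\e$ and a contradiction, whence $|\eta_\e'/\eta_\e|\le 2|x|/\e$ for large $|x|$. With that inserted, your boundary analysis (using $x|\Psi|^2\in W^{1,1}(\RR)\hookrightarrow C_0(\RR)$, which is correct) closes, provided you also record that when sending $R\to\infty$ the $J_\e$-integrand is bounded below by $-C\eta_\e^4\in L^1(\RR)$, so the truncated integrals converge to $\e J_\e(\Phi)$ by monotone convergence even when $\gamma<1$. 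Note that the paper sidesteps all of this: it proves the identity only for compactly supported $\Psi$ (where there are no boundary terms), then handles general $\Psi\in\mathcal{E}$ by the truncation $\Psi_n=\chi(|x|/n)\Psi$ and two applications of the Dominated Convergence Theorem, $G_\e(\Psi_n)\to G_\e(\Psi)$ and $J_\e(\Phi_n)\to J_\e(\Phi)$, requiring no information at all about $\eta_\e'/\eta_\e$; that is the cleaner repair of your proof if you do not wish to develop the Riccati estimate.
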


\begin{proof}
First consider $\Psi = \eta_\e \Phi \in H^1 (\R; \R^2)$ with compact support. We conclude by integrating by parts and \eqref{T1} that
\begin{align*}
\int_\R \e^2 |\Psi'|^2 \, dx &= \e^2 \int_\R \left[ (\eta_\e')^2 |\Phi|^2 + \eta_\e^2 |\Phi'|^2 + \eta_\e \eta_\e' (|\Phi|^2-1)' \right] \,dx \\
&= \int_\R \left[ \e^2 \eta_\e^2 |\Phi'|^2 +\e^2 (\eta_\e')^2 + \eta_\e^2 (1- x^2 -\eta_\e^2) (|\Phi|^2 -1) \right] \,dx.
\end{align*}
For the  other parts of the energy $G_{\e}$, simple calculations yield
\begin{align*}
 \frac12\int_\R |\Psi|^4 \,dx &=  \frac12 \int_\R \eta_\e^4 |\Phi|^4 \,dx, \\
\int_\R (x^2-1) |\Psi|^2 \,dx  &= \int_\R (x^2-1) \eta_\e^2 |\Phi|^2 \,dx, \\
(\gamma-1) \int_\R \psi_1^2 \psi_2^2 \,dx  &= (\gamma-1) \int_\R  \eta_\e^4 \phi_1^2 \phi_2^2 \,dx.
\end{align*}
We notice that
$$
\eta_\e^2 (1- x^2 -\eta_\e^2) (|\Phi|^2 -1) + \frac12 \eta_\e^4 |\Phi|^4 +(x^2-1) \eta_\e^2 |\Phi|^2
= \frac12 \eta_\e^4 + \eta_\e^2 (x^2-1) + \frac12 \eta_\e^4 (|\Phi|^2-1)^2.
$$
Combining the above equalities we obtain the splitting $G_{\e}(\Psi) = F_{\e}(\eta_\e) + \e J_{\e}(\Phi)$ with $F_{\e}$ and $J_{\e}$ given by 
(\ref{energy-F}) and (\ref{energy-J}). 

For the general case, let $\chi:\R_+\to \R$ be a cutoff function such that $\chi\equiv 1$ in $[0,1]$,  $\chi\equiv 0$ in $[2,\infty)$, and $|\chi|,|\chi'|\leq 1$ almost everywhere. Let 
$$
\Psi_n(x):=\chi (|x|/n) \Psi (x), \quad n \in \mathbb{N}.
$$ 
Since $\Psi_n$ is compactly supported, we represent $\Psi_n = \eta_{\e} \Phi_n$ and obtain 
$$
G_{\e}(\Psi_n) = F_{\e}(\eta_\e) + \e J_{\e}(\Phi_n), \quad n\in \mathbb{N}.
$$ 
Next, we note that $|\Psi_n|\leq |\Psi|$, $|\Psi_n '|\leq |\Psi '|+|\Psi|,$ and that $\Psi_n \to \Psi$, $\Psi_n' \to \Psi'$ pointwise as $n\to \infty$. These facts, together with $\int_\R x^2 |\Psi|^2 dx <\infty$, allow us to apply the Dominated Convergence Theorem to conclude that 
$$
G_\e (\Psi_n)\to G_\e (\Psi) \quad \mbox{\rm as} \quad n \to \infty.
$$  
Now, since $\eta_\e \Phi_n\to \eta_\e \Phi$ and $\eta_\e\Phi_n '\to \eta_\e\Phi '$ pointwise as $n \to \infty$ and
$$
\eta_\e ^2 |\Phi_n '|^2\leq C (|\Psi '|^2 +|\Psi|^2+ |\eta'_\e|^2 |\Phi |^2),
\quad 
\eta_\e^4 (|\Phi_n|^2-1)^2\leq C (|\Psi|^2 +\eta_\e^2)^2.
$$ 
we obtain by the same Dominated Convergence Theorem that 
$$
J_\e (\Phi_n)\to J_\e (\Phi) \quad \mbox{\rm as} \quad n \to \infty,
$$ which concludes the proof of the splitting formula in the general case.
\end{proof}

\begin{remark}
Since positive $\eta_\e \in H^1(\RR;\mathbb{R}) \cap L^{2,1}(\RR;\mathbb{R})$ is a minimizer of $F_\e$, 
we have that $\Psi \in \mathcal{E}$ is a minimizer of $G_\e$ if and only if $\Phi \in H^1_{\rm loc}(\mathbb{R};\mathbb{R}^2)$ is a minimizer of $J_\e$. 
\end{remark}

\begin{remark}
	Expression (\ref{energy-J}) for $J_{\e}$ coincides with (\ref{GPU-reduced}) after the change of variables $x \mapsto z := x/\e$ and writing $\Phi$ as a function of $z$. In what follows we use the rescaled form of $J_\e$
given by (\ref{GPU-reduced}).
\end{remark}

We proceed with the proof of Theorem \ref{gconv1} about the $\Gamma$-convergence of $J_{\e}$ in (\ref{GPU-reduced}) to $J_0$ in (\ref{GPU-0}) as $\e \to 0$. \\

\begin{proof1}{\em of Theorem \ref{gconv1}.}
	We first note that as $\e \to 0$ we have $\eta_\e(\e \cdot) \to 1$ locally uniformly in $\R$. Indeed, using (\ref{P-3}) yields $\|\eta_\e'(\e \cdot)\|_\infty = \e \|\eta_\e'\|_\infty \leq C \e^{2/3}$ and since $\eta_\e(0) \to 1$ as $\e \to 0$, we have locally uniform convergence $\eta_\e(\e \cdot) \to 1$ on $\mathbb{R}$ as $\e \to 0$.\\

(i) Since $\gamma>1$ every term in $J_\e$ is non-negative and if $J_\e (\Phi_\e) \leq C < \infty$ then for every compact set $K \subset \R$ we clearly have $J_\e (\Phi_\e) \big|_K \leq C(K)$. Due to locally uniform convergence of $\eta_\e(\e \cdot)$ to $1$ we can find $\e_0$ small enough such that $\eta_\e(\e \cdot) \geq \frac12$ on $K$ for all $\e <\e_0$ and obtain
\begin{align}
 \int_K \left[ |\Phi_\e'|^2 +|\Phi_\e|^2  \right] \, dz \leq C(K).
\end{align}
Therefore, there is a subsequence (not relabelled) such that  $\Phi_\e \rightharpoonup \Phi$ in $H^1_{\rm loc}(\R;\R^2)$ as $\e \to 0$. \\
 
(ii) Let $\Phi_\e \in H^1_{\rm loc}(\R;\R^2)$ and $\Phi_\e \to \Phi$ strongly in $L^2_{\rm loc}(\R;\R^2)$ as $\e \to 0$. If $\liminf\limits_{\e \to 0} J_\e(\Phi_\e) = \infty$ there is nothing to prove as 
the inequality (\ref{inequality-1}) is trivially satisfied and therefore we can assume $\liminf\limits_{\e \to 0} J_\e(\Phi_\e) <\infty$. Taking a subsequence $\Phi_\e$ (not relabelled) such that 
$$
\lim_{\e \to 0}  J_\e(\Phi_\e) = \liminf_{\e \to 0} J_\e(\Phi_\e) <\infty
$$
we know that there is $\e_0>0$ such that for all $0<\e <\e_0$ we have $J_\e(\Phi_\e) \leq C < \infty$ with $C>0$ independent of $\e$. Therefore, using part (i), there is a subsequence (not relabelled) such that $\Phi_\e \rightharpoonup \Phi$ in $H^1_{\rm loc}(\R;\R^2)$ as $\e \to 0$. Fixing $R>0$ and using non-negativity of terms in $J_\e$ together with the lower semicontinuity of the $H^1$-norm, we have
\begin{align*}
\liminf_{\e \to 0} J_\e(\Phi_\e) &\geq \liminf_{\e \to 0} J_\e(\Phi_\e) \Big|_{[-R,R]} \\
 &\geq \frac12 \int_{[-R,R]} \left( |\phi_{1}'|^2 + |\phi_{2}'|^2 + \frac12 (\phi_{1}^2 + \phi_{2}^2 -1)^2 + (\gamma-1) \phi_{1}^2 \phi_{2}^2 \right) \, dz.
\end{align*}
Taking the limit as $R \to \infty$ and using the Monotone Convergence Theorem we obtain $\liminf\limits_{\e \to 0} J_\e(\Phi_\e) \geq J_0 (\Phi)$. \\

(iii) Let $\Phi \in H^1_{\rm loc}(\R;\R^2)$. If $J_0 (\Phi)=\infty$, then the inequality (\ref{inequality-2}) is trivially satisfied by taking $\Phi_\e \equiv \Phi$ and therefore we assume that $J_0(\Phi)<\infty$. In this case we also define $\Phi_\e \equiv \Phi$ and clearly have $\Phi_\e \to \Phi$ in  $H^1_{\rm loc}(\R;\R^2)$ as $\e \to 0$. Since $0< \eta_\e(\e \cdot) \leq 1$ follows from (\ref{P-1}), we have $J_\e(\Phi) \leq J_0(\Phi)$ and hence $\limsup\limits_{\e \to 0} J_\e (\Phi_\e) =  \limsup\limits_{\e \to 0} J_\e(\Phi) \leq  J_0 (\Phi)$.
\end{proof1}

\section{Classification of minimizers of $G_\e$}\label{classification}

Here we characterize global minimizers of $G_\e$ in the energy space $\mathcal{E}$ for small $\e>0$. We start by showing that minimizers of $G_{\e}$ always exist.

\begin{lemma}\label{DWalphaExist}
	For any $\e>0$ and $\gamma>0$, there exists a minimizer of the energy $G_\e$ in $\mathcal E$.
\end{lemma}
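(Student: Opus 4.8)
The plan is to use the direct method of the calculus of variations: I would show that $G_\e$ is bounded below on $\mathcal{E}$ with bounded sublevel sets, extract a weakly convergent minimizing sequence, and then pass to the limit by combining weak lower semicontinuity of the convex part with a compactness argument for the nonlinear terms. First I would record the pointwise algebraic inequality that makes the quartic part coercive for every $\gamma>0$. Since $0\le \psi_1^2\psi_2^2\le \tfrac14(\psi_1^2+\psi_2^2)^2$, one has
\[
\tfrac12(\psi_1^2+\psi_2^2)^2+(\gamma-1)\psi_1^2\psi_2^2\ \ge\ c_\gamma\,(\psi_1^2+\psi_2^2)^2,\qquad c_\gamma:=\min\Big\{\tfrac12,\tfrac{1+\gamma}{4}\Big\}>0,
\]
so that, writing $\rho^2:=\psi_1^2+\psi_2^2=\abs{\Psi}^2$,
\[
G_\e(\Psi)\ \ge\ \frac12\int_\R\Big[\e^2\abs{\Psi'}^2+(x^2-1)\rho^2+c_\gamma\rho^4\Big]\,dx.
\]

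To turn this into coercivity I would split $\R$ at $\abs{x}=\sqrt2$. On $\{\abs{x}\ge\sqrt2\}$ one has $x^2-1\ge\tfrac12 x^2\ge0$, which controls $\int_{\abs{x}\ge\sqrt2}x^2\rho^2$ and, since $x^2\ge2$ there, also $\int_{\abs{x}\ge\sqrt2}\rho^2$. On the bounded set $\{\abs{x}<\sqrt2\}$ the elementary bound $c_\gamma\rho^4-\rho^2\ge-\tfrac1{4c_\gamma}$ makes the integrand bounded below by a constant and yields $\int_{\abs{x}<\sqrt2}\rho^2\le c_\gamma\int\rho^4+C$. Collecting these estimates shows that $G_\e$ is bounded below and that $G_\e(\Psi)+C_0$ dominates $\e^2\|\Psi'\|_{L^2}^2+\||x|\Psi\|_{L^2}^2+\|\Psi\|_{L^2}^2+\|\Psi\|_{L^4}^4$; in particular every sublevel set of $G_\e$ is bounded in $\mathcal{E}$ and in $L^4$. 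Then, with $m:=\inf_{\mathcal{E}}G_\e>-\infty$, I would take a minimizing sequence $\Psi_n$ and, along a subsequence (not relabelled), obtain $\Psi_n\weak\Psi_*$ weakly in $H^1(\R;\R^2)$ and in $L^{2,1}(\R;\R^2)$.

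The crucial compactness input is that the embedding $\mathcal{E}=H^1(\R)\cap L^{2,1}(\R)\hookrightarrow L^2(\R)$ is compact: on each interval $[-R,R]$ this is Rellich's theorem, while the uniform tail bound $\int_{\abs{x}>R}\abs{\Psi_n}^2\le R^{-2}\int x^2\abs{\Psi_n}^2\le C/R^2$ makes the tails uniformly small. Combining strong $L^2$ convergence with the uniform bound from $H^1(\R)\hookrightarrow L^\infty$ through $\|\Psi_n-\Psi_*\|_{L^4}^4\le\|\Psi_n-\Psi_*\|_{L^\infty}^2\,\|\Psi_n-\Psi_*\|_{L^2}^2$ upgrades this to strong convergence in $L^4$. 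It then remains to pass to the limit: the Dirichlet term $\tfrac{\e^2}2\int\abs{\Psi'}^2$ is weakly lower semicontinuous on $H^1$ by convexity, the weighted term $\tfrac12\int x^2\rho^2$ is weakly lower semicontinuous (Fatou on the a.e. limit of the nonnegative integrand, or weak lower semicontinuity of the $L^{2,1}$ seminorm), and the remaining terms $-\tfrac12\int\rho^2$, $\tfrac14\int\rho^4$ and $(\gamma-1)\int\psi_1^2\psi_2^2$ all converge along the subsequence thanks to the strong $L^2$ and $L^4$ convergence. Assembling these facts gives $G_\e(\Psi_*)\le\liminf_{n\to\infty}G_\e(\Psi_n)=m$, and since $\Psi_*\in\mathcal{E}$ we conclude $G_\e(\Psi_*)=m$, so $\Psi_*$ is the desired minimizer.

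I expect the main obstacle to be the coercivity step, where the potential $x^2-1$ and, for $0<\gamma<1$, the coupling term $(\gamma-1)\psi_1^2\psi_2^2$ are sign-indefinite; these must be absorbed by the quartic term via the inequality above and the splitting of $\R$, in order to recover full control of the $\mathcal{E}$- and $L^4$-norms. A second delicate point is the compactness required for the coupling term: since $\psi_1^2\psi_2^2$ is not convex in $\Psi$ when $\gamma<1$, weak lower semicontinuity alone does not suffice, and one genuinely needs the compact embedding of the weighted space into $L^4$ rather than mere weak convergence.
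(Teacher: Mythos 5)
Your proof is correct, and like the paper's it is an instance of the direct method (lower bound, coercivity, minimizing sequence, weak limit, lower semicontinuity); however, the crucial limit-passage step is handled by a genuinely different mechanism. The paper never establishes, nor needs, any global strong convergence: it keeps the integrand of $G_\e$ intact, observes that for every $\gamma>0$ this density is pointwise non-negative outside $[-1,1]$ (via the same algebraic inequality $\psi_1^2\psi_2^2\le\tfrac14(\psi_1^2+\psi_2^2)^2$ that you use for coercivity), so that $G_\e(\Psi_n)\ge G_\e(\Psi_n)\big|_{[-R,R]}$ for $R>1$; it then passes to the limit on each compact interval using weak $H^1$ convergence together with locally uniform convergence $\Psi_n\to\Psi$ in $C([-R,R];\R^2)$, and finally sends $R\to\infty$ by monotone convergence. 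You instead prove global compactness: the embedding $\mathcal{E}\hookrightarrow L^2(\R)$ is compact (Rellich on compacts plus the tail bound $\int_{|x|>R}|\Psi_n|^2\le C/R^2$ coming from the weight), you upgrade to strong $L^4$ convergence by interpolation with the uniform $L^\infty$ bound, and then split $G_\e$ into a weakly lower semicontinuous part and a strongly continuous part. Your route costs more work but buys more: the compact embedding of the weighted space into $L^2$ and $L^4$ is a reusable fact, your coercivity estimate is explicit and quantifies the sublevel sets, and the term-by-term argument would survive modifications of the functional that destroy the sign of the density outside a compact set. The paper's route is shorter and softer: by exploiting the sign of the whole density it never has to treat the indefinite terms $-\tfrac12\int|\Psi|^2$ and $(\gamma-1)\int\psi_1^2\psi_2^2$ separately, so no global compactness is required. (A small remark in your favor: the paper's intermediate equality $\liminf_n G_\e(\Psi_n)\big|_{[-R,R]}=G_\e(\Psi)\big|_{[-R,R]}$ should really be the inequality $\ge$, since the Dirichlet term is only lower semicontinuous under weak convergence --- exactly the point your decomposition makes explicit.)
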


\begin{proof}
It is clear from (\ref{GPU}) that there is a fixed constant $C>0$ (independent of $\e$) such that for all $\Psi \in \mathcal E$ we have $G_\e(\Psi) \geq -C$. Therefore, we can take a minimizing sequence $\{ \Psi_n \}_{n \in \mathbb{N}}$ such that 
$$
	G_\e(\Psi_n) \to \inf_{\Psi \in \mathcal E} G_\e \quad 
	\mbox{\rm as} \quad n \to \infty.
$$
Since $\inf_{\Psi \in \mathcal E} G_\e < 0$, we can choose $\{ \Psi_n \}_{n \in \mathbb{N}}$ such that $G_\e(\Psi_n) \leq 0$ and therefore $\{ \Psi_n \}_{n \in \mathbb{N}}$ is uniformly bounded in $\mathcal E$. This implies (possibly on a subsequence) that as $n \to \infty$, 
	$$
	\Psi_n \rightharpoonup \Psi  \hbox{ in } \mathcal E
	$$
	and
	$$
	\Psi_n \to \Psi \hbox{ in } C([-R,R];\R^2) \hbox{ for any fixed } R>0.
	$$
It follows from (\ref{GPU}) that since the energy density of $G_\e$ is non-negative outside $[-1,1]$, we have $G_\e(\Psi_n) \geq G_\e(\Psi_n) \big|_{[-R;R]}$ for $R>1$. Hence, we have 
	$$
	\liminf_{n \to \infty} G_\e(\Psi_n) \geq \liminf_{n \to \infty} G_\e(\Psi_n) \big|_{[-R,R]} = G_\e(\Psi) \big|_{[-R,R]}.
	$$
	Passing to the limit as $R \to \infty$ and using the Monotone Convergence Theorem, we obtain $\liminf G_\e(\Psi_n) \geq G_\e(\Psi)$ with $\Psi \in \mathcal E$. Therefore $\Psi$ is a minimizer.
\end{proof}

\begin{remark}
	\label{remark-symmetry}
Imposing the symmetry constraint in $\mathcal{E}_s$ does not change 
the argument, hence Lemma \ref{DWalphaExist} holds in $\mathcal{E}_s$ as well. 
Indeed, due to the symmetry, the minimization problem can be reduced to the half line $\R_+$ subject to the condition $\psi_1(0)=\psi_2(0)$. This condition can be applied due to Sobolev embedding of $H^1(\RR_+)$ into $C^0(\RR_+) \cap L^{\infty}(\RR_+)$.
\end{remark}

Regarding the minimizers of the energy $G_{\e}$ in $\mathcal{E}_s$, the following lemma gives the precise property of minimizers. 

\begin{lemma}
	\label{lemma-property}
Let $\Psi=(\psi_1, \psi_2)$ be a minimizer of $G_\e$ in $\mathcal{E}_s$ 
with $\psi_1(0) = \psi_2(0) > 0$. Then, either 
$\psi_1(x) > \psi_2(x) > 0$ or $\psi_2(x) > \psi_1(x) > 0$ or $\psi_1(x) =\psi_2(x)$ for $x > 0$.
\end{lemma}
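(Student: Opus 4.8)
The plan is to study the difference $w := \psi_1 - \psi_2$, which by the symmetry defining $\mathcal{E}_s$ is odd and satisfies $w(0)=0$. The asserted trichotomy is equivalent to the two statements that both components are strictly positive on $(0,\infty)$ and that $w$ does not change sign there, i.e. $w\equiv 0$, $w>0$, or $w<0$ on $(0,\infty)$. Subtracting the two Euler--Lagrange equations in \eqref{DWHPVR} and using $\psi_1^3-\psi_2^3=w(\psi_1^2+\psi_1\psi_2+\psi_2^2)$ together with $\psi_2^2\psi_1-\psi_1^2\psi_2=-\psi_1\psi_2\,w$, I would show that $w$ solves the linear homogeneous equation
\begin{equation*}
-\e^2 w'' + q(x)\,w = 0, \qquad q(x):= x^2-1+\psi_1^2+\psi_2^2+(1-\gamma)\psi_1\psi_2,
\end{equation*}
with $q$ smooth. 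This is the engine of the no-crossing argument: at any point where both $w$ and $w'$ vanish, uniqueness for this linear ODE forces $w\equiv 0$.

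First I would record that a minimizer in $\mathcal{E}_s$ is smooth on $(0,\infty)$: testing against variations compactly supported in $(0,\infty)$ (extended to $(-\infty,0)$ by the symmetry of $\mathcal{E}_s$) shows it is a weak, hence classical, solution of \eqref{DWHPVR} on $(0,\infty)$; equivalently one invokes the principle of symmetric criticality. For positivity, note that $(|\psi_1|,|\psi_2|)$ again lies in $\mathcal{E}_s$ and satisfies $G_\e(|\psi_1|,|\psi_2|)\le G_\e(\psi_1,\psi_2)$, since every term of $G_\e$ other than the Dirichlet term depends only on $\psi_i^2$ and $\bigl||\psi_i|'\bigr|=|\psi_i'|$ almost everywhere. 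Thus $(|\psi_1|,|\psi_2|)$ is also a minimizer, hence smooth; a simple zero of $\psi_i$ would produce a corner of $|\psi_i|$, contradicting smoothness, while a double zero forces $\psi_i\equiv 0$ by ODE uniqueness against $\psi_i(0)=\psi_1(0)=\psi_2(0)>0$. Hence $\psi_i$ never vanishes and, being positive at $0$, satisfies $\psi_1,\psi_2>0$ on $\mathbb{R}$.

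For the no-crossing statement, suppose $w\not\equiv 0$ yet $w(x_0)=0$ for some $x_0\in(0,\infty)$. The ODE uniqueness above rules out $w'(x_0)=0$, so $\psi_1,\psi_2$ cross transversally at $x_0$. I would then build a competitor $\hat\Psi\in\mathcal{E}_s$ by swapping the components past $x_0$: set $(\hat\psi_1,\hat\psi_2):=(\psi_1,\psi_2)$ on $[0,x_0]$ and $(\hat\psi_1,\hat\psi_2):=(\psi_2,\psi_1)$ on $[x_0,\infty)$, which is continuous at $x_0$ because $\psi_1(x_0)=\psi_2(x_0)$, and extend to $x<0$ by the $\mathcal{E}_s$ symmetry. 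Since the energy density of $G_\e$ is invariant under the pointwise interchange $\psi_1\leftrightarrow\psi_2$, we get $G_\e(\hat\Psi)=G_\e(\Psi)$, so $\hat\Psi$ is also a minimizer. By the smoothness recorded above $\hat\psi_1\in C^1$ near $x_0$; but $\hat\psi_1'(x_0^-)=\psi_1'(x_0)$ and $\hat\psi_1'(x_0^+)=\psi_2'(x_0)$, forcing $w'(x_0)=0$, a contradiction. Therefore $w$ has no zero on $(0,\infty)$ unless $w\equiv 0$, which together with positivity yields the trichotomy.

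The main obstacle is making rigorous the clash between the transversal crossing and regularity, namely that the swapped configuration $\hat\Psi$ — a priori only Lipschitz at $x_0$ — is genuinely a minimizer and hence a smooth solution of the Euler--Lagrange system on $(0,\infty)$. I would secure this either through the principle of symmetric criticality, identifying $\mathcal{E}_s$ as the fixed-point set of the $G_\e$-invariant involution $\Psi(x)\mapsto(\psi_2(-x),\psi_1(-x))$, or, more directly, by observing that a minimizer carrying a corner could be strictly lowered in energy by smoothing the Dirichlet term near $x_0$, contradicting minimality, so corners cannot occur. The remaining ingredients — the algebra producing $q$, the monotone/dominated convergence needed to handle the tails, and checking $\hat\Psi\in\mathcal{E}\cap\mathcal{E}_s$ — are routine.
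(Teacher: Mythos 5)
Your proof is correct, and it shares the paper's skeleton: pass to $(|\psi_1|,|\psi_2|)$, use that minimizers in $\mathcal{E}_s$ solve \eqref{DWHPVR} and are therefore smooth, derive the linear homogeneous equation for the difference $w=\psi_1-\psi_2$ (your $q$ agrees with the paper's equation \eqref{xi-eq}), and swap the two components to manufacture a second minimizer. Where you genuinely diverge is in how the contradiction is extracted. The paper swaps the components on the intervals between consecutive crossings so that the difference $\xi:=\psi_1-\psi_2$ becomes one-signed, and then invokes the Strong Maximum Principle twice: once on $|\psi_i|$ to get strict positivity, and once on $\xi$ to get the trichotomy. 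You avoid the maximum principle entirely: positivity follows from the corner/double-zero dichotomy for $|\psi_i|$ (a corner contradicts smoothness of the minimizer $(|\psi_1|,|\psi_2|)$, a double zero contradicts Cauchy uniqueness for the linear ODE), and the no-crossing statement follows from Cauchy uniqueness applied to $-\e^2 w''+qw=0$ (which forces any interior zero of $w\not\equiv 0$ to be transversal) combined with the $C^1$-mismatch of the swapped minimizer at the crossing point. Your route is more elementary — linear ODE uniqueness plus regularity of minimizers, both of which you establish — and is fully rigorous in one dimension; the price is that it is intrinsically one-dimensional, whereas the paper's maximum-principle formulation is the one that survives in higher-dimensional segregation problems, where crossings are interfaces rather than points and transversality is unavailable. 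One small caveat: your fallback justification that a corner "could be strictly lowered in energy by smoothing" is vaguer than your primary argument (interior variations respecting the symmetry, or Palais' symmetric criticality), so the latter should carry the regularity step.
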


\begin{proof}
If $\Psi =(\psi_1, \psi_2)$ is a minimizer of the energy $G_\e$ in $\mathcal{E}_s$, then it is clear that $\Psi' = (|\psi_1|, |\psi_2|)$ is also a minimizer of the energy $G_\e$ in $\mathcal{E}_s$. Both satisfy the Euler-Lagrange equation \eqref{DWHPVR} and hence they are smooth. Using the Strong Maximum Principle \cite[Theorem 2.10]{hanlin} for $|\psi_1|$ and $|\psi_2|$ we deduce that they are strictly positive and since it is assumed that $\psi_1(0) = \psi_2(0) >0$, then for any minimizer $\Psi = (\psi_1,\psi_2)$ we have $\psi_1(x)>0$ and $\psi_2(x)>0$ for $x > 0$.
	
Next, we show that $\psi_1$ and $\psi_2$ either coincide or have no intersections in $(0,\infty)$. Assume there is an interval $(a,b)$ such that $\psi_1(a)=\psi_2(a)$ and $\psi_1(b)=\psi_2(b)$. Due to symmetry of the energy with respect to replacing $\psi_1$ by $\psi_2$ on the interval $(a,b)$ we can define a new minimizer $\Psi$ by exchanging $\psi_1$ and $\psi_2$ on $(a,b)$. This minimizer satisfies the Euler-Lagrange equation \eqref{DWHPVR} and hence 
it is smooth. Therefore, we can always construct a minimizer such that the difference $\xi := \psi_1 -\psi_2$ does not change its sign on $(0,\infty)$. It follows from (\ref{DWHPVR}) that $\xi$ is smooth, satisfies  $\xi(0) = \psi_1(0) - \psi_2(0) = 0$, and
	\begin{align}
	\label{xi-eq}
	-\e^2 \xi'' + (x^2 + \psi_1^2 + \psi_2^2 -1) \xi - (\gamma-1) \psi_1 \psi_2 \xi =0.
	\end{align}
Using the Strong Maximum Principle again, we deduce that either $\xi(x)>0$ or $\xi(x)<0$ or $\xi(x) =0$ for all $x \in (0,\infty)$.
\end{proof}

\begin{remark}
	According to the definitions of the steady states, minimizers with $\psi_1 \equiv \psi_2$ correspond to the symmetric state (S2) and the minimizers with either 
	$\psi_1(x) > \psi_2(x) > 0$ or $\psi_2(x) > \psi_1(x) > 0$ for $x > 0$ 
	correspond to the domain wall states (S3).
\end{remark}	

Next we prove uniqueness of positive solutions of (\ref{DWHPVR}) 
for $\gamma \in (0,1)$ given by the symmetric state (S2).

\begin{lemma}
	\label{lem-uniqueness}
The symmetric state $\Psi = (1+\gamma)^{-1/2} (\eta_{\e},\eta_{\e})$ is the only positive solution of system \eqref{DWHPVR} for $\gamma \in (0,1)$.	
\end{lemma}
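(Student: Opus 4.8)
The plan is to prove that any positive solution must be symmetric, i.e.\ $\psi_1 \equiv \psi_2$, after which the system collapses to the scalar equation \eqref{T1} whose positive solution $\eta_\e$ is unique. Set $\xi := \psi_1 - \psi_2$. Subtracting the two equations of \eqref{DWHPVR} and factoring $\psi_1^3-\psi_2^3$ and $\gamma(\psi_1\psi_2^2 - \psi_1^2\psi_2)$ exactly as in the derivation of \eqref{xi-eq}, I find that $\xi$ solves the linear Schr\"odinger-type equation $-\e^2 \xi'' + V\xi = 0$ with potential
$$
V := x^2 - 1 + \psi_1^2 + \psi_2^2 + (1-\gamma)\,\psi_1\psi_2 .
$$
Since $\Psi \in \mathcal{E} \subset H^1(\R;\R^2)\cap L^{2,1}(\R;\R^2)$, the function $\xi$ decays and lies in the form domain of $L := -\e^2\partial_x^2 + V$. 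The whole argument then reduces to showing that $L$ is positive definite, which forces $\xi = 0$.

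To see the positivity of $L$, I would exploit the equation satisfied by $\psi_1$ itself. Rewriting the first line of \eqref{DWHPVR} as $M_1\psi_1 = 0$ with $M_1 := -\e^2\partial_x^2 + W_1$ and $W_1 := x^2 - 1 + \psi_1^2 + \gamma\psi_2^2$, I observe that $W_1 \to +\infty$ as $|x|\to\infty$, so $M_1$ has compact resolvent and purely discrete spectrum. Because $\psi_1>0$ is an $L^2$-eigenfunction of $M_1$ with eigenvalue $0$, and a strictly positive eigenfunction of such a confining Schr\"odinger operator is necessarily the ground state, the lowest eigenvalue of $M_1$ equals $0$; hence $M_1 \geq 0$ as a quadratic form. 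Now comparing potentials,
$$
V - W_1 = \psi_2^2 + (1-\gamma)\psi_1\psi_2 - \gamma\psi_2^2 = (1-\gamma)\,\psi_2(\psi_1+\psi_2),
$$
which is strictly positive for $\gamma \in (0,1)$ and $\psi_1,\psi_2 > 0$ (strict positivity of $\psi_1,\psi_2$ follows from the Strong Maximum Principle, as in Lemma \ref{lemma-property}).

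Testing $L\xi = 0$ against $\xi$ and integrating by parts (legitimate since $\xi$ decays), I get $\e^2\int_\R (\xi')^2\,dx + \int_\R V\xi^2\,dx = 0$, and splitting $V = W_1 + (V-W_1)$ gives
$$
0 = \langle \xi, M_1\xi\rangle + \int_\R (1-\gamma)\,\psi_2(\psi_1+\psi_2)\,\xi^2\,dx \geq \int_\R (1-\gamma)\,\psi_2(\psi_1+\psi_2)\,\xi^2\,dx \geq 0.
$$
Since the weight is strictly positive everywhere, this forces $\xi \equiv 0$, i.e.\ $\psi_1 = \psi_2 =: \psi$. The system then reduces to $-\e^2\psi'' + x^2\psi + ((1+\gamma)\psi^2 - 1)\psi = 0$, and the substitution $\psi = (1+\gamma)^{-1/2}\zeta$ turns this into precisely \eqref{T1} for $\zeta$. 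Uniqueness of the positive solution $\eta_\e$ of \eqref{T1} then yields $\psi = (1+\gamma)^{-1/2}\eta_\e$, which is the symmetric state (S2).

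I expect the main obstacle to be the spectral step, namely the rigorous justification that $0$ is the \emph{bottom} of the spectrum of $M_1$ (rather than merely an eigenvalue), which rests on the Perron--Frobenius/ground-state characterization for Schr\"odinger operators with confining potentials, together with the care needed to ensure all integrations by parts are valid from the decay encoded in $\mathcal{E}$. Once $M_1 \geq 0$ is in hand, the strict sign of $V-W_1$ for $\gamma<1$ makes the conclusion immediate; the same comparison using $\psi_2$ and $M_2$ would work symmetrically.
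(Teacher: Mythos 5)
Your proof is correct, but it follows a genuinely different route from the paper. The paper's proof is a ``hidden convexity'' argument: it sets $E(w_1,w_2):=G_\e(\sqrt{w_1},\sqrt{w_2})$ and checks that $E$ is strictly convex along segments between positive functions whenever $|\gamma|<1$, so that two distinct positive critical points of $G_\e$ would contradict each other; since the symmetric state is one positive solution, it is the only one. You instead linearize the difference: your equation for $\xi=\psi_1-\psi_2$ is exactly the paper's \eqref{xi-eq}, and your comparison is sound --- $M_1\psi_1=0$ with $\psi_1>0$ and $W_1\to+\infty$ forces $0$ to be the bottom of the spectrum of $M_1$ (Perron--Frobenius for confining $1$D Schr\"odinger operators), while $V-W_1=(1-\gamma)\psi_2(\psi_1+\psi_2)>0$ for $\gamma\in(0,1)$, so testing $L\xi=0$ against $\xi$ yields $\xi\equiv 0$, after which the scalar reduction to \eqref{T1} is immediate. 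Two remarks on the trade-offs. First, your argument needs uniqueness of the positive solution $\eta_\e$ of \eqref{T1} as an external input (it is available in the cited work of Ignat--Millot, and the paper's phrase ``the positive solution'' presupposes it), whereas the paper's convexity argument proves uniqueness for the full system in one stroke without invoking the scalar result. Second, your spectral step can be made entirely elementary and self-contained by the ground-state substitution $\phi=\psi_1\chi$, which gives $\langle\phi,M_1\phi\rangle=\e^2\int_{\R}\psi_1^2(\chi')^2\,dx\ge 0$ --- the same device as in Lemma \ref{sepen} and Remark \ref{remark-diagonalization} --- avoiding any appeal to discreteness of spectrum or simplicity of the ground state. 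One small point to make explicit in either approach: ``positive solution'' should be read as a solution in the energy space $\mathcal{E}$ (as in the paper, where \eqref{DWHPVR} is the Euler--Lagrange equation of $G_\e$ on $\mathcal{E}$), since you need $\psi_1\in L^2(\R)$ to view it as an eigenfunction of $M_1$ and you need $\xi$ in the form domain to justify the integration by parts.
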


\begin{proof}
Define the functional $E(W)$ for $W=(w_1, w_2)$ with $w_1>0$ and $w_2>0$ by 
$$
E(W) :=G_\varepsilon (\sqrt{w_1},\sqrt{w_2}).
$$ 
Let $V=(v_1,v_2)$ be such that there is $t_0>0$ such that $w_1+t v_1>0$ and $w_2+t v_2>0$ for $0<t<t_0$. A direct calculation shows that for any admissible $W$ and $V$, we have
\begin{eqnarray*}
	\frac{d^2}{dt^2} E(W+tV) &=& \frac{\e^2}{2} \int_{\mathbb{R}} \left[ \frac{(v_1' w_1 -v_1 w_1')^2}{2(w_1+tv_1)^3}+\frac{(v_2' w_2 -v_2 w_2')^2}{2(w_2+tv_2)^3} \right] dx \\
	&& +\int_{\mathbb{R}} \left[ v_1^2+v_2^2+2\gamma v_1v_2 \right] dx,
\end{eqnarray*}
which is strictly positive definite if $|\gamma| < 1$. In particular, for any positive $W_1 \neq W_2,$ the function $f_{W_1,W_2}(t):=E(W_1+t (W_2-W_1))$ satisfies $(f_{W_1,W_2})''(t)>0,$ for $t\in (0,1).$

Now, let $\Psi = (\sqrt{w}_1,\sqrt{w}_2)$ be a positive solution of system \eqref{DWHPVR} and let $\Psi' = (\sqrt{v_1},\sqrt{v_2})$ be another positive function such that $\Psi \neq \Psi'$. Since $\Psi$ is a critical point of $G_{\e}$, we have that $(f_{W,V})' (0)=0.$ Since, $(f_{W,V})''(t)>0,$ for $t\in (0,1),$ this implies that
$$ 
G_\varepsilon (\Psi')-G_\varepsilon (\Psi) = \int_{0}^{1} (f_{W,V})' (t) \, dt>0.
$$
Thus, $\Psi'$ is not a critical point of $G_\e$, otherwise we could reverse the roles of $\Psi$ and $\Psi'$ to conclude $G_\varepsilon (\Psi)-G_\varepsilon (\Psi')>0$, which is a contradiction.
\end{proof}

We can now proceed with the proofs of Theorems \ref{theorem-variational} and \ref{theorem-domain-walls}.\\

\begin{proof1}{\em of Theorem \ref{theorem-variational}.}
	By virtue of Lemma \ref{sepen}, the problem of minimizing $G_\e$ in $\mathcal{E}$ is equivalent to the problem of minimizing $J_\e$ in $H^1_{\rm loc}(\RR;\RR^2)$.
	Moreover, it is clear that
	\[
	J_\e(\Phi) \geq  \frac{1}{2} \int_{\RR} \eta_\e(\e \cdot)^4 \left[ \frac{1}{2} (\phi_1^2+\phi_2^2-1)^2 + (\gamma-1) \phi_1^2 \phi_2^2\right] dz.
	\]
	We proceed separately for $\gamma \in (0,1)$ and $\gamma \in (1,\infty)$. \\
	
	\underline{{\it Case $0<\gamma<1.$}} In this case, the quadratic form
	\be\label{Wgamma}
	W_\gamma(\phi_1,\phi_2):=\frac{1}{2}(\phi_1^2 + \phi_2^2 -1)^2 + (\gamma-1) \phi_1^2 \phi_2^2
	\ee
	achieves a minimum at $\Phi = (1+\gamma)^{-1/2} (1,1)$. To see this, it suffices to write
	$$
	\phi_1 = r \cos \theta, \quad \phi_2 = r \sin \theta
	$$
	and obtain the lower bound:
	\[
	W_\gamma (\phi_1,\phi_2)\geq \frac{1}{2} (r^2-1)^2+ \frac{1}{4} (\gamma-1)r^4 \geq 	-\frac{(1-\gamma)}{2(1+\gamma)}, 
	\]
	Therefore,
	\[J_\e(\Phi) \geq  -\frac{(1-\gamma)}{4(1+\gamma)} \int_{\RR} \eta_\e(\e z)^4 dz.
	\]
	Since the minimum of $J_{\e}(\Phi)$ is achieved at $\Phi = (1+\gamma)^{-1/2} (1,1)$, the symmetric state (S2) given by  (\ref{positive-state}) is a global minimizer of $G_{\e}$ due to the product form (\ref{transformation}) and Lemma \ref{sepen}. It is the only positive solution of system (\ref{DWHPVR}) by Lemma \ref{lem-uniqueness}.
	
	If $\Phi = (1,0)$ or $\Phi = (0,1)$, then $J_{\e}(\Phi) = 0$. Hence 
	the uncoupled states (S1) given by (\ref{uncoupled-state}) are not minimizers of $G_{\e}$.
	We show that these states are saddle points of $G_{\e}$ by computing the Hessian operator for $G_{\e}(\Psi)$ at $\Psi = (\eta_{\e},0)$:
	\begin{equation}
	\label{operator-H-uncoupled}
	G_{\e}''(\eta_{\e},0) = \begin{pmatrix} -\e^2\p_x^2 + x^2 - 1 + 3 \eta_{\e}^2 & 0\\
	0 & -\e^2\p_x^2 + x^2 - 1 + \gamma \eta_{\e}^2 \end{pmatrix} =: 
	\begin{pmatrix} H^{(1)}_{\e} & 0\\
	0 & H^{(2)}_{\e} \end{pmatrix}.
	\end{equation}
The diagonal entry $H^{(1)}_{\e}$ coincides with the Schr\"{o}dinger operator
	$$
	L_+ := -\e^2\p_x^2 + x^2 - 1 + 3 \eta_{\e}^2
	$$
considered in the context of the ground state of the scalar Gross--Pitaevskii 
equation \cite{GaPe09,GaPe}. By Theorem 2 in \cite{GaPe} (see also Lemma 2.3 in \cite{GaPe09}), the spectrum of $L_+$
in $L^2(\RR)$ is strictly positive and bounded away from zero by $C \e^{2/3}$ as $\e \to 0$ with an $\e$-independent constant $C > 0$.

The diagonal entry $H^{(2)}_{\e}$ depends on $\gamma$ and should be compared with the Schr\"{o}dinger	operator
	$$
	L_- :=  -\e^2\p_x^2 + x^2 - 1 + \eta_{\e}^2
	$$
which was also considered in \cite{GaPe09,GaPe} within the scalar theory. 
The spectrum of $L_-$ in $L^2(\RR)$ is non-negative with the lowest eigenvalue 
at $0$ due to the exact solution $L_- \eta_{\e} = 0$ which is
	equivalent to the stationary equation (\ref{T1}). 
	The rest of the spectrum is strictly positive and bounded away from zero by $C \e^2$ as $\e \to 0$ with an $\e$-independent constant $C > 0$ (see Lemma 2.1 in \cite{GaPe09}).
By Sturm's Comparison Theorem \cite[Section 5.5]{Teschl}, 
the spectrum of  $H^{(2)}_{\e}$ in $L^2(\RR)$ has at least one negative eigenvalue for every $\gamma \in (0,1)$. Therefore, the uncoupled state
(S1) are saddle points of $G_{\e}$ if $\gamma \in (0,1)$.\\
	
	\underline{{\it Case $\gamma>1.$}} Since $J_{\e}(\Phi) \geq 0$ for $\gamma > 1$ and $J_{\e}(\Phi) = 0$ if $\Phi = (1,0)$ or $\Phi = (0,1)$,
	the uncoupled states (S1) are minimizers of $G_{\e}$. 
	This is in agreement with the Hessian operator (\ref{operator-H-uncoupled}), 
for which both diagonal entries are strictly positive for $\gamma > 1$. Moreover, the uncoupled states (S1) are the only non-negative minimizers of $G_{\e}$ because all terms in $J_{\e}(\Phi)$ are positive for $\gamma > 1$ so that the minimizers must satisfy $(\phi_1')^2 + (\phi_2')^2 = 0$, $\phi_1^2 + \phi_2^2 = 1$, and $\phi_1^2 \phi_2^2 = 0$ almost everywhere on $\mathbb{R}$ which yield either  $\Phi = (1,0)$ or $\Phi = (0,1)$ for $\Phi \in H^1_{\rm loc}(\RR;\RR^2)$.

Next we show that the symmetric state (S2) is a saddle
point of $G_{\e}$ by computing the Hessian operator for $G_{\e}(\Psi)$ at $\Psi = (1+\gamma)^{-1/2} (\eta_{\e},\eta_{\e})$:
	\begin{eqnarray}
	\nonumber
	G_{\e}''(\Psi) & = & \begin{pmatrix}-\e^2\p_x^2 + x^2 + 3 \psi_1^2 + \gamma \psi_2^2 - 1 & 2\gamma \psi_1 \psi_2\\
	2\gamma \psi_1 \psi_2 & -\e^2\p_x^2 + x^2 + \gamma \psi_1^2 + 3 \psi_2^2 - 1 \end{pmatrix} \\
	\label{operator-H}
	& = & \begin{pmatrix}-\e^2\p_x^2 + x^2 + \frac{3+\gamma}{1+\gamma} \eta_{\e}^2 - 1 & \frac{2\gamma}{1+\gamma} \eta_{\e}^2 \\
	\frac{2 \gamma}{1+\gamma} \eta_{\e}^2  & -\e^2\p_x^2 + x^2 + \frac{3+\gamma}{1+\gamma} \eta_{\e}^2  - 1 \end{pmatrix}.
	\end{eqnarray}
With an elementary transformation, the Hessian operator $G_{\e}''(\Psi)$ 
is diagonalized as follows
\begin{eqnarray}
\nonumber
&& 
\frac{1}{2} \begin{pmatrix} 1 & 1 \\ 1 & -1 \end{pmatrix} 	G_{\e}''(\Psi) 
\begin{pmatrix} 1 & 1 \\ 1 & -1 \end{pmatrix} \\
\label{rotation}
&& = 
\begin{pmatrix} -\e^2\p_x^2 + x^2 - 1 + 3 \eta_{\e}^2 & 0 \\ 0 & -\e^2\p_x^2 + x^2  - 1 + \eta_{\e}^2 + 2\frac{1-\gamma}{1+\gamma} \eta_{\e}^2 \end{pmatrix}.
\end{eqnarray}
The first diagonal entry coincides with $L_+$, which is strictly positive in $L^2(\mathbb{R})$. The second diagonal entry depends on $\gamma$ and can be compared with $L_- .$ Since the spectrum of $L_-$ in $L^2(\mathbb{R})$ has a simple zero eigenvalue, by the same Sturm's Comparison Theorem, there exists at least one negative eigenvalue of $G''_{\e}(\Psi)$ if $\gamma > 1$. Therefore, the symmetric state
(S2) is a saddle point of $G_{\e}$ if $\gamma > 1$.
\end{proof1}

\begin{remark}
	\label{remark-diagonalization}
	With the use of the decomposition $\mathfrak{v} := \eta_{\e} v$ as in Lemma \ref{sepen}, the quadratic forms for the diagonal operators in (\ref{rotation}) can be rewritten as 
\begin{eqnarray}
&&	\int_{\mathbb{R}} \left[ \e^2 (\mathfrak{v}')^2	+ (x^2 - 1 + 3 \eta_e^2) \mathfrak{v}^2 \right] dx = \int_{\mathbb{R}} \left[ \e^2 \eta_{\e}^2 (v')^2 
+ 2 \eta_{\e}^4 v^2 \right] dx, 
\label{form-1} \\
\nonumber 
&&	\int_{\mathbb{R}} \left[ \e^2 (\mathfrak{v}')^2	+ (x^2 - 1 + \eta_e^2) \mathfrak{v}^2 + 2 \frac{1-\gamma}{1+\gamma} \mathfrak{v}^2 \right] dx \\
&& \qquad \qquad \qquad  = \int_{\mathbb{R}} \left[ \e^2 \eta_{\e}^2 (v')^2 
+ 2 \frac{1-\gamma}{1+\gamma}\eta_{\e}^4 v^2 \right] dx. 
\label{form-2}
\end{eqnarray}
It follows from positivity of the quadratic form (\ref{form-1}) that $L_+$ is strictly positive, 
in agreement with \cite{GaPe09,GaPe}. On the other hand, the quadratic form (\ref{form-2}) is not positive if $\gamma > 1$ as follows from the trial function $v = 1$.
\end{remark}

\vspace{0.25cm}

\begin{proof1}{\em of Theorem \ref{theorem-domain-walls}.}
Let us now consider minimizers of $G_{\e}$ in the energy space $\mathcal{E}_s$ with the symmetry constraint: 
\begin{equation}
\label{symmetry}
\psi_1(x) = \psi_2(-x), \qquad x \in \mathbb{R}.
\end{equation} 
By Remark \ref{remark-symmetry}, there is a global minimizer of $G_{\e}$ in $\mathcal{E}_s$ for every $\gamma > 0$. By Theorem \ref{theorem-variational}, the symmetric state (S2) is the minimizer of $G_{\e}$ in $\mathcal{E}_s$ for $\gamma \in (0,1)$. 

We will show that there exists $\gamma_0(\e) \in (1,\infty)$ such that 
the symmetric state (S2) is a minimizer of $G_{\e}$ in $\mathcal{E}_s$ 
for $\gamma \in (0,\gamma_0(\e)]$ and a saddle point for $\gamma \in (\gamma_0(\e),\infty)$. To do so, we recall the Hessian operator (\ref{operator-H}) at the symmetric state (S2). If $\tilde{\Psi} = (\tilde{\psi}_1,\tilde{\psi}_2)$ is a perturbation of 
$\Psi$ in $\mathcal{E}_s$, then it satisfies the symmetry (\ref{symmetry}) so that the perturbation in $\mathcal{E}_s$ can be considered on the half-line $\mathbb{R}_+$ subject to the boundary condition 
$$
\tilde{\psi}_1(0) - \tilde{\psi}_2(0) = 0.
$$ 
With the symmetry in $\mathcal{E}_s$, the second variation of $G_\e$ at $\Psi$ in the direction $\tilde{\Psi}$ is given by 
\begin{align*}
\int_0^\infty \Big[ \e^2 (\tilde{\psi}_1')^2 + \e^2 (\tilde{\psi}_2')^2 + (x^2-1) (\tilde{\psi}_1^2 + \tilde{\psi}_2^2) + \frac{3+\gamma}{1+\gamma} \eta_{\e}^2 (\tilde{\psi}_1^2 + \tilde{\psi}_2^2) 
+ \frac{4 \gamma}{1+\gamma} \eta_{\e}^2 \tilde{\psi}_1 \tilde{\psi}_2
\Big].
\end{align*}
which is rewritten by using the decomposition $\tilde{\Psi} = \eta_{\e} \tilde{\Phi}$ (as in Remark \ref{remark-diagonalization}) in the form
\begin{align*}
&\int_0^{\infty} \Big[ \e^2 \eta_{\e}^2 (\tilde{\phi}_1')^2 + \e^2 \eta_{\e}^2 (\tilde{\phi}_2')^2
 + \frac{2}{1+\gamma} \eta_{\e}^4 (\tilde{\phi}_1^2 + \tilde{\phi}_2^2 + 2 \gamma \tilde{\phi}_1 \tilde{\phi}_2)  
\Big] \\
&= \int_0^{\infty} \Big[ \e^2 \eta_{\e}^2 (\tilde{\phi}_1')^2 + \e^2 \eta_{\e}^2 (\tilde{\phi}_2')^2
 + 2\frac{1-\gamma}{1+\gamma} \eta_{\e}^4 (\tilde{\phi}_1^2 + \tilde{\phi}_2^2) + 
  \frac{2 \gamma}{1+\gamma}\eta_\e^4 ( \tilde{\phi}_1 + \tilde{\phi}_2)^2 
\Big] ,
\end{align*}
subject to the boundary condition $\tilde{\phi}_1(0) - \tilde{\phi}_2(0) = 0$. 
It is clear from (\ref{rotation}) that the negative subspace of the quadratic form is given by the reduction $v := \tilde{\phi}_1 = -\tilde{\phi}_2$, which satisfies the Dirichlet condition $v(0) = 0$. 
This recovers the quadratic form in (\ref{form-2}), which is defined by the Schr\"{o}dinger operator
\begin{equation}
L_{\gamma} := L_- + 2 \frac{1-\gamma}{1+\gamma} \eta_{\e}^2
\end{equation}
on $H^1_0(\RR_+) \cap L^{1,2}(\RR_+) \subset L^2(\RR_+)$, 
where $H^1_0(\RR_+)$ encodes the Dirichlet condition at $x = 0$. Since the spectrum of the Dirichlet problem for $L_-$ in $L^2(\RR_+)$ consists of strictly positive eigenvalues and the potential of $L_{\gamma}$ is decreasing function of $\gamma$,  eigenvalues of $L_{\gamma}$ are continuous and strictly decreasing. Since the positive eigenvalues of $L_-$ on $H^1_0(\RR_+) \cap L^{1,2}(\RR_+) \subset L^2(\RR_+)$ are $\mathcal{O}(\e^2)$ close to $0$, there exists $\e_0 > 0$ such that for every $\e \in (0,\e_0)$ there exists $\gamma_0(\e) \in (1,\infty)$ such that $L_{\gamma}$ is strictly positive for $\gamma \in (1,\gamma_0(\e))$ and admits at least one negative eigenvalue for $\gamma \in (\gamma_0(\e),\infty)$. 

Next, we show that the symmetric state (S2) is a global minimizer of the energy $G_{\e}$ for $\gamma \in [1,\gamma_0(\e)]$. To show this, it suffices to show that $\Phi = (1+\gamma)^{-1/2} (1,1)$ is the global minimizer of the functional $J_\e$ defined in \eqref{energy-J}. For any perturbation $\tilde{\Phi} \in \mathcal E_s$ we obtain 
	\begin{align*}
J_\e (\Phi + \tilde{\Phi}) -J_\e(\Phi) &= \eps^{-1} \int_0^\infty \left( 
\e^2 \eta_\e^2 |\tilde{\Phi}'|^2 + \frac12 \eta_\e^4\left[ (|\Phi + \tilde{\Phi}|^2 -1)^2 - (|\Phi|^2 -1)^2 \right] \right.\\
	& \qquad  \left. + (\gamma-1) \eta_\e^4 \left[ (\phi_1 + \tilde{\phi}_1)^2 (\phi_2+ \tilde{\phi}_2)^2 - \phi_1^2 \phi_2^2 \right]\, \right) dx.
	\end{align*}
A straightforward computation yields
	\begin{align*}
J_\e (\Phi + \tilde{\Phi}) -J_\e(\Phi) &= \eps^{-1} \int_0^\infty \left( \e^2 \eta_\e^2 |\tilde{\Phi}'|^2 - 2\frac{\gamma-1}{\gamma+1} \eta_\e^4 |\tilde{\Phi}|^2 \right. 
	+\frac12 \eta_\e^4(|\tilde{\Phi}|^2 + 2 \Phi \cdot \tilde{\Phi})^2 \\
& \qquad +\left.  (\gamma-1) \eta_\e^4  \left[ (\tilde{\phi}_1 \tilde{\phi}_2 +\Phi \cdot \tilde{\Phi})^2  + (\Phi \cdot \tilde{\Phi})^2)\right] \right) dx. 
	\end{align*}  
	Since the second variation of $J_{\e}$ at $\Phi$ is non-negative in $\mathcal{E}_s$ for all $\gamma \leq \gamma_0(\e)$, we have 
	$$
	\int_0^\infty \left( \e^2 \eta_\e^2 |\tilde{\Phi}'|^2 - 2 \frac{\gamma-1}{\gamma+1} \eta_\e^4 |\tilde{\Phi}|^2 \right) dx \geq 0.
	$$
	Since all other terms are positive for $\gamma \geq 1$, we have 
$J_\e (\Phi + \tilde{\Phi}) -J_\e(\Phi) \geq 0$ for $\gamma \in [1,\gamma_0(\e)]$. Therefore, the symmetric state (S2) is a global minimizer of $G_{\e}$ in $\mathcal{E}_s$ for  $\gamma \in [1,\gamma_0(\e)]$. 
Augmented with the result of Theorem 2, it is a global minimizer of $G_{\e}$ 
in $\mathcal{E}_s$ for $\gamma \in (0,\gamma_0(\e)]$.
Since it is a saddle point for $\gamma \in (\gamma_0(\e),\infty)$, the pair of domain wall states (S3) are global minimizers of $G_{\e}$ in $\mathcal{E}_s$ in this case. 

It remains to show that $\gamma_0(\e) \to 1$ as $\e \to 0$. Consider the trial 
function $v$ satisfying the Dirichlet condition:
$$
v(x) = \left\{ \begin{array}{ll} 
4 x^2, \quad & 0\leq x \leq \frac12, \\
1, \quad &\frac12 < x <\infty.
\end{array}
\right.
$$
Plugging $v$ into (\ref{form-2}) on $\R_+$, 
we obtain
\begin{align*}
& \int_{0}^{\infty} \left[ \e^2 \eta_{\e}^2 (v')^2 
+ 2 \frac{1-\gamma}{1+\gamma}\eta_{\e}^4 v^2 \right] dx \\
& = 32 \int_0^\frac12 x^2 \eta_\e^2 \Big[ 2 \e^2 - \frac{\gamma-1}{\gamma + 1} x^2 \eta_\e^2  \Big] dx - 2 \frac{\gamma-1}{\gamma + 1} \int_\frac12^{\infty} \eta_{\e}^4 dx.
\end{align*}
Since $\eta_{\e} \to \eta_0$ as $\e \to 0$ and $\eta_0$ is given by (\ref{TF-limit}), it is clear that the quadratic form above is strictly negative for every $\gamma > 1$ as $\e \to 0$. Consequently, $\gamma_0(\e) \to 1$ as $\e \to 0$. 
\end{proof1}

\begin{remark}
Domain wall states (S3) bifurcate from the symmetric state (S2) by means of the local (pitchfork) bifurcation at $\gamma = \gamma_0(\e)$ via zero eigenvalue of the Hessian operator. Computing the normal form of the local bifurcation does not give a sign-definite coefficient which would imply that the domain wall states bifurcate for $\gamma > \gamma_0(\e)$. Therefore, we are not able to claim that the domain wall states do not exist in the region where $\gamma \in (1,\gamma_0(\e))$.
\end{remark}

\begin{remark}
	As $\e \to 0$, the bifurcation threshold $\gamma_0(\e)$ converges to $1$ 
	and it is known from \cite{ABCP} that the domain wall states of the homogeneous system (\ref{hc}) exist for every $\gamma > 1$. The minimizers of $G_\e$ are related to the minimizers of $J_\e$ through the transformation formula \eqref{transformation} and therefore, the domain wall states (S3) are related to the domain wall solutions of the coupled system \eqref{hc} with the boundary conditions \eqref{plus} via $\Gamma$-convergence of $J_{\e}$ to $J_0$ by Theorem \ref{gconv1}.
\end{remark}

Finally, we prove that if the non-symmetric critical points of $G_{\e}$ in $\mathcal{D}_{\alpha}$ exist, they are global minimizers of $G_{\e}$ in $\mathcal{D}_{\alpha}$. We employ the decomposition trick allowing us to significantly simplify the analysis of the second variation and the energy excess (see e.g. \cite{INSZ15, INSZ18}).

\vspace{0.25cm}
	
\begin{proof1}{\em of Theorem \ref{DWalpha}.}
Assume that there exists a critical point $\Psi = (\psi_1,\psi_2) \in \mathcal{D}_{\alpha}$ of the energy $G_{\e}$ satisfying $\psi_1(x) > \psi_2(x) > 0$ for all $x > 0$. For any perturbation $\tilde{\Psi} = (\tilde{\psi}_1,\tilde{\psi}_2) \in \mathcal{E}_{s}$ satisfying $\tilde{\psi}_1(0) = \tilde{\psi}_2(0) = 0$, we have 
		\begin{align*}
		G_\e (\Psi+\tilde{\Psi}) - G_\e (\Psi) &= \int_0^\infty \Big[ \e^2 (\tilde{\psi}_1')^2 + \e^2 (\tilde{\psi}_2')^2+ 
		(x^2+\psi_1^2+\psi_2^2-1) (\tilde{\psi}_1^2 + \tilde{\psi}_2^2) \\ 
		&+ \frac12 (2 \psi_1 \tilde{\psi}_1 + 2 \psi_2 \tilde{\psi}_2 + \tilde{\psi}_1^2 + \tilde{\psi}_2^2)^2 \\
		&+ (\gamma-1)[(2 \psi_1 \tilde{\psi}_1 + \tilde{\psi}_1^2)(2 \psi_2 \tilde{\psi}_2 + \tilde{\psi}_2^2) + \psi_1^2  \tilde{\psi}_2^2 + \psi_2^2 \tilde{\psi}_1^2]
		\Big] dx
		\end{align*}
Since a critical point $\Psi $ is smooth in the Euler--Lagrange equation (\ref{DWHPVR}) and since $\xi:=\psi_1-\psi_2 > 0$ on $(0,\infty)$ holds by the assumption of the theorem, we can write $\tilde{\Psi} = \xi \tilde{\Phi}$ with $\tilde{\Phi} \in H^1(\R_+;\R^2)$. Since $\xi(0) = \psi_1(0) - \psi_2(0) = 0$, 
the Dirichlet condition $\tilde{\psi}_1(0) = \tilde{\psi}_2(0) = 0$ is satisfied with arbitrary boundary values of $\tilde{\phi}_1(0)$ and $\tilde{\phi}_2(0)$. By using equation (\ref{xi-eq}) and integration by parts, we obtain 
		\begin{align*}
		\int_0^\infty \e^2 |\tilde{\Psi}'|^2 dx &=  \e^2 \int_0^\infty \left[ (\xi')^2 |\tilde{\Phi}|^2 + \xi^2 |\tilde{\Phi}'|^2 + \xi' \xi (|\tilde{\Phi}|^2)' \right] dx \\
		&= \e^2 \int_0^\infty  \left[ \xi^2 |\tilde{\Phi}'|^2 - \xi'' \xi |\tilde{\Phi}|^2 \right] dx.
		\end{align*}
		Subsituting (\ref{xi-eq}) gives now
		\begin{align*}
		G_\e (\Psi+\tilde{\Psi}) - G_\e (\Psi) = \int_0^\infty \Big[ \e^2 \xi^2 (\tilde{\phi}_1')^2 + \e^2 \xi^2 (\tilde{\phi}_2')^2 +  \frac12 (2 \psi_1 \tilde{\psi}_1 + 2 \psi_2 \tilde{\psi}_2 + \tilde{\psi}_1^2 + \tilde{\psi}_2^2)^2&&  \\
		+ (\gamma-1)[(2 \psi_1 \tilde{\psi}_1 + \tilde{\psi}_1^2)(2 \psi_2 \tilde{\psi}_2 + \tilde{\psi}_2^2) + \psi_1^2  \tilde{\psi}_2^2 + \psi_2^2 \tilde{\psi}_1^2 + \psi_1\psi_2 (\tilde{\psi}_1^2 + \tilde{\psi}_2^2)]
		\Big] dx,&&
		\end{align*}
		where the last term is non-negative since
		\begin{align*}
		&(2 \psi_1 \tilde{\psi}_1 + \tilde{\psi}_1^2)(2 \psi_2 \tilde{\psi}_2 + \tilde{\psi}_2^2) + \psi_1^2  \tilde{\psi}_2^2 + \psi_2^2 \tilde{\psi}_1^2 + \psi_1\psi_2 (\tilde{\psi}_1^2 + \tilde{\psi}_2^2) \\
		&= \psi_1 \psi_2 (\tilde{\psi}_1 + \tilde{\psi}_2)^2 + (\psi_1\tilde{\psi}_2 + \psi_2 \tilde{\psi}_1 + \tilde{\psi}_1\tilde{\psi}_2)^2 \geq 0.
		\end{align*}
Thus, for any $\tilde{\Psi} \in H_0^1(\R_+; \R^2)$, we have 
		$$
		G_\e (\Psi+\tilde{\Psi}) - G_\e (\Psi) \geq 0,
		$$
		with equality holding if and only if either $\tilde{\Psi} = 0$ or $\tilde{\Psi}=(\psi_2-\psi_1, \psi_1 - \psi_2)$. Therefore, the critical points $\Psi =(\psi_1,\psi_2)$ and $\Psi' =(\psi_2,\psi_1)$ are the only global minimizers of the energy $G_{\e}$ in $\mathcal{D}_{\alpha}$. 
	\end{proof1}

\section{Bifurcations of steady states}
\label{sec-bifurcations}

We start by studying bifurcations of steady states in the exceptional case $\gamma=1$. Bifurcations from the one-parameter family (\ref{rotating-state}) can be analyzed by the Lyapunov--Schmidt theory of bifurcations from a simple eigenvalue \cite{CR71}.

\vspace{0.25cm}

\begin{proof1}{\em of Theorem \ref{theorem-bifurcation}.}
	Let $\delta := \gamma - 1$ and rewrite the stationary Gross--Pitaevskii system (\ref{DWHPVR}) in the equivalent form
	\be\label{DWHPVR-rotated}
	\left.
	\begin{matrix} (-\e^2 \partial_x^2 + x^2 + \psi_1^2 + \psi_2^2 - 1) \psi_1 + \delta \psi_1 \psi_2^2 = 0, \\
		(-\e^2 \partial_x^2 + x^2 + \psi_1^2 + \psi_2^2 - 1) \psi_2 + \delta \psi_1^2 \psi_2 = 0,
	\end{matrix} \right\} \quad x \in \RR.
	\ee
	Substituting the decomposition
	\begin{equation}
	\label{decomposition-bif}
	\left.
	\begin{matrix}
	\psi_1(x) =  \cos \theta \; \eta_{\e}(x) + \varphi_1(x), \\
	\psi_2(x) =  \sin \theta \; \eta_{\e}(x) + \varphi_2(x)\end{matrix} \right\}
	\end{equation}
	into system (\ref{DWHPVR-rotated}) yields the perturbed system for $\varphi = (\varphi_1,\varphi_2)^T$ in the following form:
	\begin{equation}
	\label{perturbed-system-1}
	L_{\theta,\delta} \varphi +
	N_{\theta,\delta}(\varphi) + H_{\theta,\delta} = 0,
	\end{equation}
	where 
	\begin{eqnarray*}
		L_{\theta,\delta} & := & \left[ \begin{matrix}
			-\e^2 \partial_x^2 + x^2 - 1 + (1 + 2 \cos^2 \theta) \eta_{\e}^2 &
			2 \sin \theta \cos \theta \eta_{\e}^2 \\
			2 \sin \theta \cos \theta \eta_{\e}^2 & -\e^2 \partial_x^2 + x^2 - 1 + (1 + 2 \sin^2 \theta) \eta_{\e}^2
		\end{matrix} \right] \\
		& \phantom{t} & + \delta \left[ \begin{matrix}
			\sin^2 \theta \eta_{\e}^2 & 2 \sin \theta \cos \theta \eta_{\e}^2 \\
			2 \sin \theta \cos \theta \eta_{\e}^2 & \cos^2 \theta \eta_{\e}^2
		\end{matrix} \right],
	\end{eqnarray*}
	\begin{eqnarray*}
		N_{\theta,\delta}(\varphi) & := & \eta_{\e} \left[ \begin{matrix} 3 \cos \theta \varphi_1^2
			+ 2 (1+\delta) \sin \theta \varphi_1 \varphi_2 + (1 + \delta) \cos \theta \varphi_2^2 \\
			(1 + \delta) \sin \theta \varphi_1^2
			+ 2 (1+\delta) \cos \theta \varphi_1 \varphi_2 +  3 \sin \theta \varphi_2^2 \end{matrix} \right] \\
		& \phantom{t} & + \left[ \begin{matrix} (\varphi_1^2 + (1+\delta) \varphi_2^2) \varphi_1 \\
			((1+\delta) \varphi_1^2 + \varphi_2^2) \varphi_2  \end{matrix} \right],
	\end{eqnarray*}
and
	\begin{equation*}
H_{\theta,\delta} := \delta \sin \theta \cos \theta \eta_{\e}^3 \left[ \begin{matrix} \sin \theta \\
\cos \theta \end{matrix} \right],
\end{equation*}
	Due to the rotational invariance at $\delta = 0$, zero is a simple eigenvalue of $L_{\theta,\delta = 0}$ for every $\theta \in [0,2\pi]$ with the exact eigenvector
	\begin{equation}
	\label{zero-mode}
	L_{\theta,\delta = 0} \psi_0 = 0, \quad \mbox{\rm where} \;\; \psi_0 := \left[ \begin{matrix} -\sin \theta \\ \cos \theta \end{matrix} \right] \eta_{\e},
	\end{equation}
	which is related to the derivative of the ground state family (\ref{rotating-state}) in $\theta$. 
	This neutral mode inspires the Lyapunov--Schmidt decomposition in the form
	\begin{equation}\label{LS}
	\varphi = a \psi_0 + \hat{\varphi},
	\end{equation}
	subject to the orthogonality condition $\langle \psi_0, \hat{\varphi} \rangle = 0$,
	where $a \in \mathbb{R}$ is a parameter. 
	By the  Lyapunov--Schmidt bifurcation theory \cite{CR71}, there is a solution for $(a,\tilde{\varphi})$ for small $\delta \neq 0$ if and only if $\theta$ is a root of the bifurcation equation
	\begin{equation}
	\label{LS-constraint}
	\langle \psi_0, H_{\e,\delta} \rangle = \frac{1}{4} \delta  \sin 4\theta \| \eta_{\e} \|_{L^4}^4 = 0.
	\end{equation}
	The family of ground states (\ref{rotating-state}) is positive if $\theta \in [0,\frac{\pi}{2}]$. Simple zeros of the constraint (\ref{LS-constraint}) in $[0,\frac{\pi}{2}]$ occur for $\theta = 0$, $\theta = \frac{\pi}{4}$, and $\theta = \frac{\pi}{2}$. 
	The first and third roots correspond to the uncoupled states (S1) given by  (\ref{uncoupled-state}) 
	and the second root corresponds to the symmetric state (S2) given by (\ref{positive-state}). By the  Lyapunov--Schmidt bifurcation theory, 
there exists a unique continuation from the simple roots of the constraint (\ref{LS-constraint}). Hence, the three solutions uniquely
continue with $(a,\hat{\varphi})$ being a function of $\delta \neq 0$. 
(In fact, the three solutions are available exactly.) 
No other branches may bifurcate from the family of ground states (\ref{rotating-state}) as $\delta \neq 0$ ($\gamma \neq 1$).
\end{proof1}

\vspace{0.25cm}

We continue by studying bifurcations of domain wall state (S3) from the symmetric state (S2) near the bifurcation point $(\e,\gamma) = (0,1)$ within the energy functional $I_{\mu,1}$ defined in (\ref{energy-gamma-reduced}). The following lemma establishes the $\Gamma$-convergence of the energy  $I_{\mu,\gamma}$ in (\ref{energy-gamma}) to $I_{\mu,1}$ as $\gamma\to 1^+$.

\begin{lemma}
Fix  $\mu > 0$. Let $\Theta_{\gamma}$ be a sequence in $H^1_{\rm loc}(\R,\mathbb{R}^2)$ for $\gamma>1$. Then, 
	\begin{itemize}
		\item[(i)] If $\gamma \to 1^+$ and $I_{\mu,\gamma}(\Theta_{\gamma}) < \infty$ uniformly in $\gamma$ then we have a subsequence (not relabelled) $\Theta_{\gamma} \rightharpoonup \Theta$ in $H^1_{\rm loc}(\I,\mathbb{R}^2)$ and $|\Theta (x)|^2 =1$ for $x \in \I=\left(-\frac{1}{\mu},\frac{1}{\mu} \right)$.
		\item[(ii)] \emph{($\Gamma-\liminf$ inequality)} If $\gamma \to 1^+$ and $\Theta_{\gamma} \to \Theta$ in $L^2_{\rm loc}(\I,\mathbb{R}^2)$, then
		\begin{equation}
		\liminf_{\gamma \to 1^+}  I_{\mu,\gamma}(\Theta_{\gamma}) \geq I_{\mu,1}(\Theta).
		\end{equation}
		\item[(iii)] \emph{($\Gamma-\limsup$ inequality)} If $\Theta \in H^1_{\rm loc}(\I,\R^2)$, then there is a sequence 
		
		\noindent $\Theta_{\gamma} \in H^1_{\rm loc}(\R,\mathbb{R}^2)$ such that as $\gamma \to 1^+$ we have $\Theta_{\gamma} \to \Theta$ in $L^2_{\rm loc}(\I,\mathbb{R}^2)$ and 
		\begin{equation}
		\limsup_{\gamma \to 1^+} I_{\mu,\gamma}(\Theta_\gamma) \leq I_{\mu,1}(\Theta).
		\end{equation}
	\end{itemize}
	\label{theorem-gamma}
\end{lemma}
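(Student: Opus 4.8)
The plan is to run the same three-part scheme as in the proof of Theorem \ref{gconv1}, the only genuinely new feature being the diverging prefactor $\frac{1}{2(\gamma-1)}$ multiplying $\eta_\e(\mu\cdot)^4(\theta_1^2+\theta_2^2-1)^2$ in (\ref{energy-gamma}): in the limit $\gamma\to1^+$ this term penalizes any deviation from $|\Theta|^2=1$ and thus produces the constrained form of $I_{\mu,1}$ in (\ref{energy-gamma-reduced}). Throughout I use that $\e=\mu\sqrt{\gamma-1}\to0$ as $\gamma\to1^+$, so that by (\ref{P-2}) one has $\eta_\e(\mu\cdot)\to\eta_0(\mu\cdot)$ uniformly on compact subsets of $\I$, while $0<\eta_\e(\mu\cdot)\le1$ by (\ref{P-1}) and $\eta_\e(\mu\cdot)\le\eta_0(\mu\cdot)$ on $[-\frac{1-\e^{1/3}}{\mu},\frac{1-\e^{1/3}}{\mu}]$ by (\ref{P-1-plus}).

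For part (i) I would assume $I_{\mu,\gamma}(\Theta_\gamma)\le C$ and use that all four integrands are nonnegative for $\gamma>1$. The penalty bound gives $\int_\R\eta_\e(\mu\cdot)^4(|\Theta_\gamma|^2-1)^2\,dy\le C(\gamma-1)\to0$; on any compact $K\subset\I$ the weight satisfies $\eta_\e(\mu\cdot)^4\ge c_K>0$ for $\gamma$ near $1$, whence $|\Theta_\gamma|^2\to1$ in $L^2(K)$. The Dirichlet part yields $\int_K|\Theta_\gamma'|^2\le C(K)$, and the $L^2(K)$ bound on $|\Theta_\gamma|^2$ controls $\|\Theta_\gamma\|_{L^2(K)}$, so $\Theta_\gamma$ is bounded in $H^1(K)$. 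Weak compactness and a diagonal argument over an exhaustion of $\I$ give $\Theta_\gamma\rightharpoonup\Theta$ in $H^1_{\rm loc}(\I)$; Rellich gives $\Theta_\gamma\to\Theta$ in $L^2(K)$, and comparison with $|\Theta_\gamma|^2\to1$ forces $|\Theta|^2=1$ a.e.\ on $\I$.

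For part (ii) I may assume the liminf is finite and pass to a subsequence realizing it with $I_{\mu,\gamma}(\Theta_\gamma)\le C$, so part (i) applies and $|\Theta|^2=1$. Fixing $R<\frac1\mu$, I drop the nonnegative penalty and restrict to $[-R,R]$, where $\eta_\e(\mu\cdot)\to\eta_0(\mu\cdot)$ uniformly. For the Dirichlet term I split $\eta_\e(\mu\cdot)^2=\eta_0(\mu\cdot)^2+(\eta_\e(\mu\cdot)^2-\eta_0(\mu\cdot)^2)$, bounding the error by the uniform norm times the bounded Dirichlet energy and handling the main part by weak lower semicontinuity of the convex functional $v\mapsto\int_{-R}^R\eta_0(\mu\cdot)^2|v|^2\,dy$. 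For the quartic term the compact embedding $H^1([-R,R])\hookrightarrow C^0$ gives uniform convergence $\Theta_\gamma\to\Theta$, so the integrand converges. Letting $R\uparrow\frac1\mu$ and using monotone convergence yields $\liminf_{\gamma\to1^+}I_{\mu,\gamma}(\Theta_\gamma)\ge I_{\mu,1}(\Theta)$.

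Part (iii) is where the real difficulty lies, and the main obstacle is precisely the degeneracy of $\eta_0(\mu\cdot)$ at $y=\pm\frac1\mu$: the naive choice $\Theta_\gamma=\Theta$ fails because $\eta_\e(\mu\cdot)>\eta_0(\mu\cdot)$ in a thin layer near the endpoints where $|\Theta'|^2$ need not be integrable. I would therefore first reduce to a \emph{dense-in-energy} class: writing $\Theta=(\cos\phi,\sin\phi)$ via an $H^1_{\rm loc}(\I)$ lifting $\phi$ on the circle $|\Theta|^2=1$, I truncate $\phi$ to be constant on $[\rho,\frac1\mu]$ and $[-\frac1\mu,-\rho]$ with $\rho\uparrow\frac1\mu$; since $\eta_0(\mu\cdot)^2(\phi')^2\in L^1(\I)$ and $\int_{[\rho,1/\mu]}\eta_0(\mu\cdot)^4\,dy\to0$, these truncations converge in $L^2(\I)$ with $I_{\mu,1}\to I_{\mu,1}(\Theta)$. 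For a truncated profile, constant equal to unit vectors $c_\pm$ near the endpoints, I set $\Theta_\gamma:=\Theta$ on $\I$ and extend it by $c_\pm$ beyond $\pm\frac1\mu$, so that the penalty vanishes identically ($|\Theta_\gamma|^2\equiv1$), the gradient is supported where $\eta_\e(\mu\cdot)^2\to\eta_0(\mu\cdot)^2$ uniformly, the interior quartic term converges by dominated convergence, and the exterior quartic contribution is of order $\int_1^\infty\eta_\e^4\,dx\to0$ by the decay of the ground state $\eta_\e$ away from the Thomas--Fermi support (\ref{TF-limit}). This gives $\lim_{\gamma\to1^+}I_{\mu,\gamma}(\Theta_\gamma)=I_{\mu,1}(\Theta)$ on the dense class, and a standard diagonal argument together with the $L^2_{\rm loc}$-lower semicontinuity of the $\Gamma$-$\limsup$ upgrades it to the recovery inequality (\ref{energy-gamma-reduced}) for arbitrary $\Theta$.
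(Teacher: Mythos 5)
Your proposal is correct, and parts (i) and (ii) follow essentially the same route as the paper: the same penalty estimate $\int \eta_\e(\mu\cdot)^4(|\Theta_\gamma|^2-1)^2\,dy \leq C(\gamma-1)$ and lower bound on the weight on compact subsets of $\I$ for compactness, and the same drop-the-penalty, weak-lower-semicontinuity-plus-locally-uniform-convergence argument on $[-R,R]$ followed by monotone convergence for the $\liminf$ inequality. Part (iii) is where you genuinely diverge. The paper builds a single recovery sequence by truncating $\Theta$ at the $\gamma$-dependent radius $\pm\frac{1-\e^{1/3}}{\mu}$ (with $\e=\mu\sqrt{\gamma-1}$) and extending by constants; the penalty then vanishes identically, the pointwise comparison $\eta_\e \leq \eta_0$ of (\ref{P-1-plus}) dominates the interior energy directly by $I_{\mu,1}(\Theta)$, and the exterior contribution is an explicit $\mathcal{O}(\e^{1/3})$ error via (\ref{P-3}) and the decay of $\eta_\e$ — no density argument is needed. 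You instead fix the truncation radius $\rho$, observe that the $\rho$-truncations form a class dense in energy (monotone convergence for the Dirichlet part, vanishing of $\int_{\rho<|y|<1/\mu}\eta_0(\mu\cdot)^4\,dy$ for the quartic part), take the truncation itself as its own recovery sequence (using only locally uniform convergence of $\eta_\e(\mu\cdot)$ on the compact support of the gradient, not (\ref{P-1-plus})), and then invoke the standard density-plus-diagonalization principle for the $\Gamma$-$\limsup$. Both are valid; the paper's construction is more quantitative (one sequence, explicit rate), while yours is the softer, more modular textbook argument that would survive with weaker information on $\eta_\e$. One small point you gloss over that the paper handles explicitly: when $I_{\mu,1}(\Theta)=\infty$ with $|\Theta|^2\not\equiv 1$, no lifting $\phi$ exists, so your dense-class reduction does not literally apply; in that case one must still exhibit \emph{some} sequence in $H^1_{\rm loc}(\R;\R^2)$ converging to $\Theta$ in $L^2_{\rm loc}(\I;\R^2)$ (constant extension of a truncation suffices, as in the paper), after which the inequality is trivial.
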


\begin{proof} 
For fixed $\mu>0$, we take $\e = \mu \sqrt{\gamma-1}$ and define the energy functional $I_{\mu,\gamma}$ in the form \eqref{energy-gamma} for $\gamma > 1$. 
Using the properties of $\eta_\e$ we know that 
$$\eta_\e(\mu \cdot) = \eta_{\mu \sqrt{\gamma-1}}(\mu \cdot) \to \eta_0(\mu \cdot)$$ locally uniformly on $\R$ as $\gamma \to 1^+$. 

(i) Assume $I_{\mu,\gamma} (\Theta_\gamma) < \infty$ uniformly in $\gamma$, we want to show that $\Theta_\gamma \rightharpoonup \Theta$ weakly in $H^1_{\rm loc}(\I;\R^2)$. We first observe that if $I_{\mu,\gamma} (\Theta_\gamma) < \infty$ then for any $0<R<\frac{1}{\mu}$ we have $\eta_\e(\mu \cdot) > C_1(R)>0$ on the interval $(-R,R)$ for $\e$ small enough and hence
$$
\int_{-R}^R  |\Theta_\gamma'|^2
+ |\Theta_\gamma|^2  \, dy \leq C.
$$
Therefore, we have $\Theta_\gamma \rightharpoonup \Theta$ weakly in $H^1_{\rm loc}(\I; \R^2)$ (up to a subsequence). Moreover, we also have 
$$
\int_{-R}^R (|\Theta_\gamma|^2 - 1)^2 \, dy\leq C (\gamma -1),
$$
where $C>0$ depends on $R$ but is independent of $\gamma$. Therefore, taking a limit in $\gamma$ we obtain $\int_{-R}^R (|\Theta|^2 - 1)^2 \, dy=0$ for any $0<R<\frac{1}{\mu}$ and hence $|\Theta(y)|^2 = 1$ for all $y \in \I$.

\vskip 0.2cm

(ii)  Let $\Theta_\gamma \in H^1_{\rm loc}(\R;\R^2)$ and $\Theta_\gamma \to \Theta$ strongly in $L^2_{\rm loc}(\I;\R^2)$. If $\liminf\limits_{\gamma \to 1^+} I_{\mu, \gamma} (\Theta_\gamma) = \infty$ there is nothing to prove, therefore we can assume $\liminf\limits_{\gamma \to 1^+} I_{\mu, \gamma} (\Theta_\gamma) <\infty$. Taking a subsequence $\Theta_\gamma$ (not relabelled) such that 
$$
\lim_{\gamma \to 1^+} I_{\mu, \gamma} (\Theta_\gamma) = \liminf_{\gamma \to 1^+}  I_{\mu, \gamma} (\Theta_\gamma) <\infty,
$$
we know that there is $\tilde\gamma>1$ such that for all $1<\gamma <\tilde\gamma$ we have $I_{\mu, \gamma} (\Theta_\gamma) \leq C < \infty$ with $C>0$ independent of $\gamma$. Therefore, using part (i), there is a subsequence (not relabelled) such that $\Theta_\gamma \rightharpoonup \Theta$ in $H^1_{\rm loc}(\I;\R^2)$ and $|\Theta(x)|^2=1$ for $x \in \I$. Fixing $0<R<\frac{1}{\mu}$ and using non-negativity of terms in $I_{\mu, \gamma}$ together with the properties of the $H^1$-norm and local uniform convergence $\eta_\e (\mu \cdot) \to \eta_0 (\mu \cdot)$ on $[-R,R]$ we have
\begin{align*}
 \liminf_{\gamma \to 1^+}  I_{\mu, \gamma} (\Theta_\gamma) &\geq  \liminf_{\gamma \to 1^+}  I_{\mu, \gamma} (\Theta_\gamma) \Big|_{[-R,R]} \\
 &\geq \frac12 \int_{[-R,R]} \left( \eta_0^2 (\mu \cdot) |\Theta'|^2 + \eta_0^4 (\mu \cdot)  \theta_{1}^2 \theta_{2}^2 \right) \, dy.
\end{align*}
Taking the limit as $R \to \frac{1}{\mu}$ and using the Monotone Convergence Theorem we obtain $\liminf\limits_{\gamma \to 1^+} I_{\mu, \gamma} (\Theta_\gamma) \geq I_{\mu,1} (\Theta)$. \\

(iii) Let $\Theta \in H^1_{\rm loc}(\I;\R^2)$, due to Sobolev embeddings we have $\Theta \in C_{\rm loc}(\I; \R^2)$. If $I_{\mu, 1} (\Theta) =\infty$ then the statement is trivially satisfied by taking 
$$
\Theta_\gamma(x) = 
\begin{cases}
\Theta(x) & \hbox{ if } x \in \left[ -\frac{1}{\gamma \mu}, \frac{1}{\gamma \mu} \right],  \\
\Theta( \frac{1}{\gamma \mu}) & \hbox{ if } x \notin 	\left[ -\frac{1}{\gamma \mu}, \frac{1}{\gamma \mu} \right], 
\end{cases}
$$
It is clear that $\Theta_\gamma \in H^1_{\rm loc} (\R; \R^2)$ and $\Theta_\gamma \to \Theta$ in $H^1_{\rm loc} (\I; \R^2)$.
Therefore we assume that $I_{\mu, 1} (\Theta) <\infty$. In this case we know that $|\Theta(x)|^2 =1$ for $x \in \I$ and we define
$$
\Theta_\gamma(x) = 
\begin{cases}
\Theta(x) & \hbox{ if } x \in \left[ -\frac{1-\e^{1/3}}{\mu}, \frac{1-\e^{1/3}}{\mu}\right], \\
\Theta( \frac{1-\e^{1/3}}{\mu}) & \hbox{ if } x \notin  \left[ -\frac{1-\e^{1/3}}{\mu}, \frac{1-\e^{1/3}}{\mu}\right], 
\end{cases}
$$
to obtain $\Theta_\gamma \to \Theta$ in  $H^1_{\rm loc}(\I;\R^2)$ (recall that $\e=\mu \sqrt{\gamma-1}$).  We want to prove that $\limsup\limits_{\gamma \to 1^+} I_{\mu, \gamma} (\Theta_\gamma) \leq I_{\mu,1}(\Theta)$. We define $\I_\e= (-\frac{1-\e^{1/3}}{\mu}, \frac{1-\e^{1/3}}{\mu})$ to obtain
\begin{align*}
 I_{\mu,\gamma} (\Theta_\gamma) &=  \frac{1}{2} \int_{\mathbb{R}} \left[ \eta_{\e}(\mu \cdot)^2 |\Theta_\gamma'|^2 
+   \eta_{\e}(\mu \cdot)^4 \theta_{\gamma,1}^2 \theta_{\gamma, 2}^2 \right] \, dy \\
&=  \frac{1}{2} \int_{\I_\e} \left[ \eta_{\e}(\mu \cdot)^2 |\Theta_\gamma'|^2
+   \eta_{\e}(\mu \cdot)^4 \theta_{\gamma,1}^2 \theta_{\gamma,2}^2 \right] \, dy +  \frac{1}{2} \int_{\mathbb{R} \setminus \I_\e}  \eta_{\e}(\mu \cdot)^4 \theta_{\gamma,1}^2 \theta_{\gamma,2}^2 dy \\
&\leq  \frac{1}{2} \int_{\I_\e} \left[ \eta_{\e}(\mu \cdot)^2 (|\theta_1'|^2 + |\theta_2'|^2) 
+   \eta_{\e}(\mu \cdot)^4 \theta_1^2 \theta_2^2 \right] \, dy + C \e^{1/3},
\end{align*}
where in the last inequality we used $|\Theta_\gamma|^2 =1$ and 
the estimate (\ref{P-3}) together with the fast decay of $\eta_{\e}(\mu y)$ to zero at infinity. Using (\ref{P-1-plus}), it is clear that
\begin{align*}
 I_{\mu,\gamma} (\Theta) &\leq  \frac{1}{2} \int_{\I_\e} \left[ \eta_{0}(\mu \cdot)^2 (|\theta_1'|^2 + |\theta_2'|^2) 
+   \eta_{0}(\mu \cdot)^4 \theta_1^2 \theta_2^2 \right] \, dy + C \e^{1/3}.
\end{align*}
Taking a $\limsup$ from both parts as $\gamma \to 1^-$ and using the  Monotone Convergence Theorem we obtain $\limsup\limits_{\gamma \to 1^+} I_{\mu, \gamma} (\Theta_\gamma) \leq I_{\mu,1}(\Theta)$.
\end{proof}

\begin{remark}
The above proof holds for $\mu>0$. In order to extend the result for $\mu = 0$  we can fix $\gamma>1$ and take the limit $\mu \to 0$. By Theorem~\ref{gconv1}, we have  $\Gamma$-convergence $I_{\mu,\gamma} \xrightarrow{\Gamma} I_{0,\gamma}$ as $\mu \to 0$ with
$$
I_{0,\gamma} (\Theta) =  \frac{1}{2} \int_{\mathbb{R}} \left[ 
(\theta_1')^2 + (\theta_2')^2
+  \frac{1}{2(\gamma-1)}  (\theta_1^2 + \theta_2^2  - 1)^2 +  \theta_1^2 \theta_2^2 \right] \, dy.
$$
By the same proof as in Lemma \ref{theorem-gamma}, we have $\Gamma$-convergence $I_{0,\gamma} \xrightarrow{\Gamma} I_{0,1}$ as $\gamma \to 1^+$, where the limiting energy functional $I_{0,1}$ is given by 
(\ref{energy-gamma-limiting}). 
\end{remark}

We can now give the proof of Theorem \ref{theorem-asymptotic}.

\vspace{0.25cm}

\begin{proof1}{\em of Theorem \ref{theorem-asymptotic}.}
We consider minimizers of $I_{\mu,1}$ 
given by (\ref{energy-gamma-reduced}) in the energy space $\mathcal{E}_s$ with the symmetry:
\begin{equation}
\label{symmetry-theta}
\theta_1(y) = \theta_2(-y), \quad y \in \RR.
\end{equation}
The symmetric state (S2) corresponds to the choice $\theta_1 = \theta_2 = \frac{1}{\sqrt{2}}$. The domain wall state (S3) corresponds to the choice 
with $\theta_1 \not\equiv \theta_2$. Perturbations $\tilde{\Theta} = (\tilde{\theta}_1,\tilde{\theta}_2)$ to these steady 
states in $\mathcal{E}_s$ satisfy the same symmetry (\ref{symmetry-theta}), hence they can be considered on the half-line $\RR_+$ subject to the Dirichlet conditions $\tilde{\theta}_1(0) - \tilde{\theta}_2(0) = 0$. 

Using the representation $(\theta_1,\theta_2) = (\sin(u),\cos(u))$
on the circle $\theta_1^2 + \theta_2^2 = 1$ and
writing $I_{\mu,1}(u)$ instead of $I_{\mu,1}(\Theta)$, we obtain
\begin{equation}
\label{energy-I}
I_{\mu,1}(u) = \frac{1}{2} \int_{\mathcal{I}_{\mu}} \left[ \eta_0(\mu \cdot)^2 (u')^2 + \frac{1}{4} \eta_0(\mu \cdot)^4 \sin^2(2u) \right] dy.
\end{equation}
The Euler--Lagrange equation  is 
\begin{equation}
\label{EL-I}
-\frac{d}{dy} \left[ \eta_0(\mu y)^2 \frac{du}{dy} \right] + \frac{1}{4} \eta_0(\mu y)^4 \sin(4u) = 0, \quad y \in \mathcal{I}_{\mu},
\end{equation}
with $u = \frac{\pi}{4}$ being the constant solution representing the symmetric 
state (S2). With the symmetry in $\mathcal{E}_s$, the second variation of $I_{\mu,1}(u)$ at $u = \frac{\pi}{4}$ in the direction $\tilde{u}$ is given by 
\begin{equation}
\label{second-var}
\delta^2 I_{\mu,1} = \int_0^{\mu^{-1}} \left[ \eta_0(\mu \cdot)^2 (\tilde{u}')^2 - \eta_0(\mu \cdot)^4 \tilde{u}^2 \right] dy
\end{equation}
subject to the Dirichlet condition $\tilde{u}(0) = 0$. We can rescale the domain as $x= \mu y$ and change $\tilde u(x) \to v(y)$ to obtain new expression for the second variation
\begin{equation}
\label{second-var1}
\delta^2 I_{\mu,1}  =\frac{1}{\mu} \int_0^{1} \left[ \mu^2\eta_0^2 (v')^2 - \eta_0^4 v^2 \right] dx.
\end{equation}
Since $\eta_0(x)^2 = (1 - x^2) {\bf 1}_{|x| \leq 1}$, the second variation is studied from the spectral problem with the spectral parameter $\nu$,
\begin{equation}
\label{Schr-spec}
- \frac{d}{dx} \left[ (1-x^2) \frac{dv}{dx} \right] = \nu (1-x^2)^2 v(x), \quad 0 < x < 1,
\end{equation}
subject to the Dirichlet condition $v(0) = 0$ and suitable boundary conditions 
at the regular singular point $x = 1$ such that the quadratic form (\ref{second-var1}) is finite. The index of the Frobenius theory at $x = 1$ is 
the double zero and the second solution $v(x) \sim \log(1-x)$ violates boundedness of the quadratic form (\ref{second-var1}). Therefore, 
the only relevant solution to \eqref{Schr-spec} satisfies $|v(1)| < \infty$, $|v'(1)|<\infty$ and therefore integrating  \eqref{Schr-spec} on $(x,1)$ we have $v'(1)=\lim_{x\to 1} v'(x)=0$. The spectral 
problem (\ref{Schr-spec}) with two boundary conditions has purely discrete positive spectrum with the smallest eigenvalue $\nu_0>0$ given by 
the Rayleigh quotient
$$
\nu_0 = \inf_{v \in H^1_0(0,1)} \frac{\int_0^1 (1-x^2) (v')^2 dx}{\int_0^1(1-x^2)^2 v^2 dx},
$$
where $H^1_0(0,1)$ refers to functions satisfying the Dirichlet condition 
at $x = 0$ but not at $x = 1$. Since
$$
\delta^2 I_{\mu,1}(v)\geq\frac{1}{\mu} (\mu^2 \nu_0 -1) \int_0^{1} \eta_0^4 v^2 \, dx,
$$
the constant solution $u = \frac{\pi}{4}$ is a local minimizer of $I_{\mu,1}$ for $\mu^2 > \nu_0^{-1}$ and a saddle point of $I_{\mu,1}$ for $\mu^2 < \nu_0^{-1}$. This gives the assertion of the theorem for  $(\theta_1,\theta_2) = (\sin(u),\cos(u))$ with $\mu_0 := \nu_0^{-1/2}$ if we can show that the constant solution $u = \frac{\pi}{2}$ is a global minimizer of $I_{\mu,1}$ when $\mu^2 \geq \nu_0^{-1}$. This is shown from the representation for any $\tilde u$:
\begin{align}
I_{\mu,1}\left(\frac{\pi}{4} +\tilde u\right) - I_{\mu,1}\left(\frac{\pi}{4}\right) &=   \int_0^{\mu^{-1}} \left[ \eta_0(\mu \cdot)^2 (\tilde u')^2 - \frac{1}{4} \eta_0(\mu \cdot)^4 \sin^2(2\tilde u) \right] dy \\ 
& \geq \int_0^{\mu^{-1}} \left[ \eta_0(\mu \cdot)^2 (\tilde u')^2 - \eta_0(\mu \cdot)^4 \tilde u^2 \right] dy \geq 0.
\end{align}
Hence, $u = \frac{\pi}{4}$ is a global minimizer of $I_{\mu,1}$ in 
the space with symmetry (\ref{symmetry-theta}) if $\mu \in [\mu_0,\infty)$
and is a saddle point if $\mu \in (0,\mu_0)$. Since $I_{\mu,1}$ admits a minimizer for every $\mu > 0$, the nonconstant state $u$ with $u(0) = \frac{\pi}{4}$ and $u'(0) \neq 0$ is a minimizer of $I_{\mu,1}$ for $\mu \in (0,\mu_0)$. 

By Lemma \ref{lemma-gamma-uniqueness} below, 
the constant state $u = \frac{\pi}{4}$ is the only solution 
of the Euler--Lagrange equaiton (\ref{EL-I}) with $u(0) = \frac{\pi}{4}$ for $\mu \in [\mu_0,\infty)$, 
therefore the nonconstant state does not exist for $\mu \in [\mu_0,\infty)$. 
\end{proof1}

The following lemma give uniqueness of the constant state $u = \frac{\pi}{4}$ among solutions of the Euler--Lagrange equation (\ref{EL-I}) satisfying 
$u(0) = \frac{\pi}{4}$ for $\mu \in [\mu_0,\infty)$.

\begin{lemma}
	\label{lemma-gamma-uniqueness}
	For $\mu \in [\mu_0,\infty)$ there is only one solution of 
	the Euler--Lagrange equation \eqref{EL-I} with $u(0) = \frac{\pi}{4}$ given by the constant state $u = \frac{\pi}{4}$.
\end{lemma}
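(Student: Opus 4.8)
The plan is to argue directly from the Euler--Lagrange equation \eqref{EL-I}, rather than from the minimality established in Theorem \ref{theorem-asymptotic}, since the statement concerns \emph{every} solution with $u(0)=\tfrac{\pi}{4}$ and not merely global minimizers (a non-minimizing critical point is a priori not excluded). Writing $u=\tfrac{\pi}{4}+w$ with $w(0)=0$ and using $\sin(4u)=\sin(\pi+4w)=-\sin(4w)$, equation \eqref{EL-I} becomes
\begin{equation*}
-\frac{d}{dy}\Bigl[\eta_0(\mu y)^2 w'\Bigr] = \tfrac14\,\eta_0(\mu y)^4 \sin(4w), \qquad y\in \mathcal{I}_{\mu}.
\end{equation*}
The idea is to test this identity against $w$ itself and to feed the result into the weighted Rayleigh bound already isolated in the proof of Theorem \ref{theorem-asymptotic}.

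First I would multiply the equation by $w$ and integrate over $\mathcal{I}_{\mu}$. Integration by parts produces the boundary term $\eta_0(\mu y)^2 w'w$ at $y=\pm\mu^{-1}$, which vanishes: exactly as in the Frobenius discussion following \eqref{Schr-spec}, a finite-energy solution has $w,w'$ bounded up to the regular singular endpoints while $\eta_0(\mu y)^2=1-\mu^2 y^2\to 0$ there. This yields the identity
\begin{equation*}
\int_{\mathcal{I}_{\mu}} \eta_0(\mu\cdot)^2 (w')^2\,dy = \tfrac14 \int_{\mathcal{I}_{\mu}} \eta_0(\mu\cdot)^4 \sin(4w)\,w\,dy .
\end{equation*}

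Next I would bound both sides by $\int_{\mathcal{I}_{\mu}}\eta_0(\mu\cdot)^4 w^2\,dy$. On the right, the elementary inequality $\tfrac14\sin(4w)\,w\le w^2$ (equivalently $t\sin t\le t^2$, with equality only at $w=0$) gives an upper bound. On the left, the Rayleigh quotient defining $\nu_0$ applies on $(0,\mu^{-1})$ to any $w$ with $w(0)=0$ and, after the rescaling $x=\mu y$ used in \eqref{second-var1}, reads $\int_0^{\mu^{-1}}\eta_0(\mu\cdot)^2(w')^2\ge \mu^2\nu_0\int_0^{\mu^{-1}}\eta_0(\mu\cdot)^4 w^2$; applying the same bound on $(-\mu^{-1},0)$ via the reflection $y\mapsto -y$ (again using $w(0)=0$) and adding, the inequality holds over all of $\mathcal{I}_{\mu}$. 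Combining the three facts,
\begin{equation*}
\mu^2\nu_0\int_{\mathcal{I}_{\mu}}\eta_0(\mu\cdot)^4 w^2\,dy \le \int_{\mathcal{I}_{\mu}}\eta_0(\mu\cdot)^2 (w')^2\,dy = \tfrac14\int_{\mathcal{I}_{\mu}}\eta_0(\mu\cdot)^4\sin(4w)\,w\,dy \le \int_{\mathcal{I}_{\mu}}\eta_0(\mu\cdot)^4 w^2\,dy .
\end{equation*}

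Finally I would read off $w\equiv 0$. Since $\mu_0=\nu_0^{-1/2}$, for $\mu>\mu_0$ we have $\mu^2\nu_0>1$, so the outer inequality forces $\int_{\mathcal{I}_{\mu}}\eta_0(\mu\cdot)^4 w^2=0$ and hence $w\equiv 0$. At the borderline $\mu=\mu_0$ one has $\mu^2\nu_0=1$, so the displayed chain collapses to equalities; in particular $\tfrac14\int\eta_0(\mu\cdot)^4\sin(4w)w=\int\eta_0(\mu\cdot)^4 w^2$, and since $w^2-\tfrac14\sin(4w)w\ge 0$ pointwise this integrand must vanish a.e.\ on the support of $\eta_0(\mu\cdot)$, which again forces $w\equiv 0$ (as $\sin(4w)=4w$ only at $w=0$). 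In both cases $u\equiv\tfrac{\pi}{4}$ on $\mathcal{I}_{\mu}$. I expect the only delicate point to be the justification of the vanishing boundary terms and the validity of the weighted Rayleigh inequality up to the singular endpoints $y=\pm\mu^{-1}$; this is covered by the finite-energy/Frobenius considerations already invoked in Theorem \ref{theorem-asymptotic}, so no new analytic input is required.
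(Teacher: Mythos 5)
Your proof is correct, but it follows a genuinely different route from the paper's. The paper proves uniqueness by the same convexity-along-segments scheme it uses for Lemma \ref{lem-uniqueness}: for $\mu\ge\mu_0$ one computes $\frac{d^2}{dt^2}I_{\mu,1}(u+tv)=\int_{\mathcal{I}_{\mu}}\left[\eta_0(\mu\cdot)^2(v')^2+\eta_0(\mu\cdot)^4\cos(4(u+tv))v^2\right]dy\ge\int_{\mathcal{I}_{\mu}}\left[\eta_0(\mu\cdot)^2(v')^2-\eta_0(\mu\cdot)^4v^2\right]dy\ge 0$, with strict positivity somewhere along the segment joining two distinct states; if $u_2\not\equiv\frac{\pi}{4}$ were a second critical point, integrating $f'_{u_1,u_2}$ and then reversing the roles of $u_1$ and $u_2$ yields contradictory strict energy inequalities. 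You instead test the equation satisfied by $w=u-\frac{\pi}{4}$ against $w$ itself and squeeze between the Rayleigh bound and the pointwise inequality $t\sin t\le t^2$ --- which is exactly the pointwise counterpart of the paper's bound $\cos(4(u+tv))\ge-1$, so both arguments hinge on the identical spectral ingredient ($\nu_0$ from \eqref{Schr-spec}). What each buys: your squeeze is more elementary, avoids the role-reversal contradiction, and treats the borderline case $\mu=\mu_0$ transparently through the equality case; the paper's convexity argument needs no testing of the equation against the solution (at the constant state the first variation vanishes pointwise) and recycles an argument already in the paper. Concerning the one point you flag: it is real but benign, and can be dispatched in either of two ways. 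If ``solution of \eqref{EL-I}'' is read variationally (a finite-energy critical point of $I_{\mu,1}$, which is also what the paper's role-reversal step implicitly requires of $u_2$), then the weak formulation tested with $v=w$ gives your integral identity directly, with no integration by parts and no boundary terms at all. If instead $u$ is a classical solution of finite energy, note that $(\eta_0(\mu\cdot)^2w')'=-\frac14\eta_0(\mu\cdot)^4\sin(4w)$ is bounded, so $\eta_0(\mu\cdot)^2w'$ extends continuously to $\pm\mu^{-1}$; a nonzero endpoint limit would force $(w')^2\sim c^2\eta_0(\mu\cdot)^{-4}$ and hence $\int\eta_0(\mu\cdot)^2(w')^2\,dy=\infty$, so the limit vanishes, and then the equation gives $|w'|\le C(1-\mu|y|)^2$, so $w$ and $w'$ are bounded and the boundary terms indeed vanish.
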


\begin{proof}
	Let $u$ be a solution of the Euler--Lagrange equation \eqref{EL-I} and $I_{\mu,1}(u)$ be given by (\ref{energy-I}) and
	$v=(v_1,v_2)$. A direct calculation shows that for any admissible $u$ and $v$, we have
	\begin{align*}
	\frac{d^2}{dt^2} I_{\mu,1}(u+tv) &= \int_{\mathcal{I}_{\mu}} \left[ \eta_0(\mu \cdot)^2 (v')^2 + \eta_0(\mu \cdot)^4 \cos(4(u + t v)) v^2 \right] dy \\
	&\geq \int_{\mathcal{I}_{\mu}} \left[ \eta_0(\mu \cdot)^2 (v')^2 - \eta_0(\mu \cdot)^4  v^2 \right] dy .
	\end{align*}
	which is non-negative definite if $\mu \in [\mu_0,\infty)$. In particular, for any $u_1 \neq u_2,$ the function $f_{u_1,u_2}(t):=E(u_1+t (u_2-u_1))$ satisfies $(f_{u_1,u_2})''(t)>0,$ for $t\in (0,1).$
	
	Now, let $u_1=\frac{\pi}{4}$ and let $u_2 \not\equiv \frac{\pi}{4}$ be another solution of the Euler--Lagrange equation \eqref{EL-I} with $u(0) = \frac{\pi}{4}$. Since $u_1$ is a critical point of $I_{\mu,1}$, we have that $(f_{u_1,u_2})' (0)=0.$ On the other hand, since $u_1+t (u_2-u_1) \not\equiv \frac14(\pi +2\pi n)$ with $n \in \mathbb Z$, we have $(f_{u_1,u_2})''(t)>0$ for $t\in (\alpha,\beta) \subset (0,1)$. This implies that
	$$ 
	I_{\mu,1} (u_1)-I_{\mu,1}(u_2)= \int_{0}^{1} (f_{u_1,u_2})' (t) \, dt>0.
	$$
	We can reverse the roles of $u_1$ and $u_2$ to conclude $I_{\mu,1} (u_1)-I_{\mu,1}(u_2)<0$, which is a contradiction. So there is only one solution $u=\frac{\pi}{4}$ if $\mu \in [\mu_0,\infty)$.
\end{proof}

The final lemma clarifies the local (pitchfork) bifurcation 
among minimizers of $I_{\mu,1}$ at $\mu = \mu_0$. 
The nonconstant states bifurcate to $\mu < \mu_0$, where they are
global minimizers of $I_{\mu,1}$ by Theorem \ref{theorem-asymptotic}.

\begin{lemma}
	\label{lemma-normal-form}
The nonconstant state of the Euler--Lagrange equations 
(\ref{EL-I}) with $u(0) = \frac{\pi}{4}$ and $u'(0) \neq 0$ 
bifurcates to $\mu < \mu_0$ and satisfies 
$\| u - \frac{\pi}{4} \|_{L^{\infty}} \leq C \sqrt{\mu_0 - \mu}$
for some $\mu$-independent constant $C > 0$.
\end{lemma}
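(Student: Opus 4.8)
The plan is to treat the appearance of the nonconstant solution as a pitchfork bifurcation from the constant state $u = \frac{\pi}{4}$ at $\mu = \mu_0$ and to extract the scaling law from a Lyapunov--Schmidt reduction. Since solutions with the symmetry (\ref{symmetry-theta}) restrict to the half-line with $u(0) = \frac{\pi}{4}$ and a natural condition at the edge of the cloud, I would first rescale $x = \mu y$ and write $u = \frac{\pi}{4} + W$, so that the Euler--Lagrange equation (\ref{EL-I}) becomes, on $(0,1)$ with $\eta_0(x)^2 = 1-x^2$,
$$
-\mu^2 \frac{d}{dx}\left[ (1-x^2) W' \right] - \frac{1}{4} (1-x^2)^2 \sin(4W) = 0, \quad W(0) = 0, \quad \lim_{x \to 1} (1-x^2) W'(x) = 0.
$$
Dividing by the weight $(1-x^2)^2$ where it is positive, this is equivalent to $G(W,\mu) := \mu^2 \mathcal{A} W - \frac14 \sin(4W) = 0$, where $\mathcal{A} W := -(1-x^2)^{-2}\frac{d}{dx}[(1-x^2)W']$ is the self-adjoint Sturm--Liouville operator associated with the spectral problem (\ref{Schr-spec}) in the weighted space $L^2_{(1-x^2)^2}(0,1)$. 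By the spectral analysis already carried out in the proof of Theorem \ref{theorem-asymptotic}, $\mathcal{A}$ has purely discrete spectrum $0 < \nu_0 < \nu_1 < \cdots$ with orthonormal eigenfunctions $v_0, v_1, \dots$, and $\mu_0 = \nu_0^{-1/2}$, so that at $\mu = \mu_0$ the linearization $D_W G(0,\mu_0) = \mu_0^2 \mathcal{A} - 1$ has a one-dimensional kernel spanned by $v_0$.

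Next I would perform the Lyapunov--Schmidt decomposition $W = a v_0 + \Phi$ with $\langle \Phi, v_0 \rangle = 0$ in $L^2_{(1-x^2)^2}$. Because $D_W G(0,\mu)$ restricted to the orthogonal complement of $v_0$ stays boundedly invertible for $\mu$ near $\mu_0$, the implicit function theorem solves the projected equation $(I-P)G(a v_0 + \Phi,\mu) = 0$ for $\Phi = \Phi(a,\mu)$. Two structural facts drive the computation: the map $G$ is odd in $W$ (since $\sin$ is odd), which forces $\Phi$ to be odd in $a$ and hence $\Phi(a,\mu) = O(a^3)$ and eliminates all even-order terms; and the Taylor expansion $\frac14\sin(4W) = W - \frac83 W^3 + O(W^5)$ supplies the cubic coefficient. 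Projecting onto $v_0$, using $\mathcal{A} v_0 = \nu_0 v_0$ and orthogonality to kill the linear contribution of $\Phi$, the reduced bifurcation equation becomes
$$
a\left[ \nu_0 \mu^2 - 1 + \frac{8}{3}\left(\int_0^1 (1-x^2)^2 v_0^4\, dx\right) a^2 + O(a^4) \right] = 0.
$$

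For the nonconstant branch $a \neq 0$ (equivalently $u'(0) \neq 0$, since $v_0'(0) \neq 0$), writing $K := \frac{8}{3}\int_0^1 (1-x^2)^2 v_0^4\, dx > 0$ and using $\nu_0 \mu_0^2 = 1$, the bracket gives $\nu_0(\mu_0^2 - \mu^2) = K a^2 + O(a^4)$. Since $K > 0$ the left-hand side must be positive, forcing $\mu < \mu_0$ and showing the bifurcation is to the left. Factoring $\mu_0^2 - \mu^2 = (\mu_0 - \mu)(\mu_0 + \mu)$ and solving yields $a^2 = \frac{\nu_0(\mu_0+\mu)}{K}(\mu_0 - \mu)(1 + o(1))$, whence $|a| \leq C\sqrt{\mu_0 - \mu}$ for $\mu$ close to $\mu_0$. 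Finally, $\|u - \frac{\pi}{4}\|_{L^\infty} = \|W\|_{L^\infty} \leq |a|\,\|v_0\|_{L^\infty} + \|\Phi\|_{L^\infty} \leq C|a| = O(\sqrt{\mu_0-\mu})$, where the $L^\infty$ bound on $\Phi = O(a^3)$ follows from elliptic regularity and the $L^\infty$ norm is unchanged by the rescaling $x = \mu y$, giving the claim.

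The main obstacle I anticipate is the degenerate (regular singular) endpoint $x = 1$, where the weight $1-x^2$ vanishes: one must set the Lyapunov--Schmidt reduction in weighted Sobolev spaces adapted to this degeneracy, verify that $\mathcal{A}$ has compact resolvent there (so the spectrum is discrete and $\nu_0$ is simple, as used above) and that $v_0$ is bounded up to $x=1$ (so that $K < \infty$ and the Frobenius analysis from the proof of Theorem \ref{theorem-asymptotic} selects the correct boundary behavior $v_0'(1)=0$), and finally establish enough regularity of $\Phi$ near the endpoint to control $\|\Phi\|_{L^\infty}$. Once the functional-analytic framework accommodating this boundary degeneracy is in place, the remaining normal-form computation is routine.
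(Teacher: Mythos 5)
Your proposal is correct and follows essentially the same route as the paper's proof: the same rescaling $x = \mu y$ to the unit interval, the same Lyapunov--Schmidt decomposition $v = a v_0 + w$ with orthogonality in the $(1-x^2)^2$-weighted $L^2$ space, the same cubic normal form whose negative coefficient forces the branch to $\mu < \mu_0$, and the same amplitude scaling $|a| \leq C\sqrt{\mu_0-\mu}$ yielding the $L^{\infty}$ bound. The only cosmetic differences are that you exploit oddness of the nonlinearity to obtain the $\mathcal{O}(a^3)$ estimate on the orthogonal remainder (the paper proves the analogous bound (\ref{est-w}) directly from the projected equation) and that you solve the bifurcation equation for $a$ in terms of $\mu$, whereas the paper solves for $\delta = \mu^2 - \mu_0^2$ as a function of $a$.
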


\begin{proof}
	We rescale again $x = \mu y$ and write $u(y) = \frac{\pi}{4} + v(x)$. The stationary equation (\ref{EL-I}) is rewritten in the form of the boundary-value problem 
\begin{equation}
\label{EL-I-normal}
\left. \begin{array}{l}
-\mu^2 \frac{d}{dx} \left[ (1-x^2)\frac{dv}{dx} \right] - \frac{1}{4} (1-x^2)^2 \sin(4v) = 0, \quad 0 < x < 1, \\
v(0) = 0, \quad v'(1) = 0. \end{array} \right\}
\end{equation}
The computation of the normal form for the local (pitchfork) bifurcation relies 
on the standard method of Lyapunov--Schmidt reduction \cite{CR71} as the zero eigenvalue of the linearized operator at $\mu = \mu_0$ is simple. 

Let $v_0$ be the eigenfunction of the spectral problem (\ref{Schr-spec}) for the smallest eigenvalue $\nu_0 > 0$, where $\mu_0 := \nu_0^{-1/2}$. 
To compute the coefficients of the normal form, we write 
$v = a v_0 + w$ and $\mu^2 = \mu_0^2 + \delta$, where $a$ and $\delta$ are parameters of the Lyapunov--Schmidt decomposition and $w$ is assumed to satisfy the orthogonality condition 
\begin{equation}
\label{pert-0}
\int_0^1 (1-x^2)^2 v_0(x) w(x) dx = 0.
\end{equation}
Decomposing (\ref{EL-I-normal}) into the projection to $v_0$ and to $w$ yields a system of two equations of the Lyapunov--Schmidt reduction:
\begin{equation}
\label{pert-1}
-\mu_0^2 \frac{d}{dx} \left[ (1-x^2)\frac{dw}{dx} \right] - (1-x^2)^2 \left[
w + F(w;a,\delta) \right] = 0
\end{equation}	
and
\begin{equation}
\label{pert-2}
\delta \int_0^1 (1-x^2)^2 v_0^2 dx = \frac{\mu_0^2}{4a} \int_0^1 (1-x^2)^2 v_0 \left[ \sin(4av_0 + 4 w) - 4 a v_0 \right] dx,
\end{equation}	
where 
$$
F(w;a,\delta) := \frac{\mu_0^2}{4 (\mu_0^2 + \delta)} \sin(4av_0 + 4 w) - a v_0 - w 
$$
and $w(x)$ satisfies the boundary conditions $w(0) = 0$ and $w'(1) = 0$. 
Since $F(0;a,\delta) = \mathcal{O}(|a|^3 + |a \delta|)$ as $(a,\delta) \to (0,0)$, there exists a unique solution $w$ to the nonlinear equation (\ref{pert-1}) under the conditions (\ref{pert-0}) and (\ref{pert-2}), 
for every small $(a,\delta) \in \mathbb{R}^2$, which satisfies the estimate
\begin{equation}
\label{est-w}
\| \sqrt{1-x^2} w' \|_{L^2} + \| (1-x^2) w \|_{L^2} \leq C(a^2 + |\delta|)|a|
\end{equation}
for some $(a,\delta)$-independent constant $C > 0$. Moreover, 
$w$ is in the domain of the linear operator associated with the spectral 
problem (\ref{Schr-spec}) satisfying the boundary conditions 
$w(0) = 0$ and $w'(1) = 0$ so that the estimate (\ref{est-w}) 
also extends to the $L^{\infty}(0,1)$ norm of the solution $w$. 

Substituting the solution $w$ of equation (\ref{pert-1}) into equation (\ref{pert-2}) and using \eqref{pert-0} defines uniquely $\delta$ for every small $a \in \mathbb{R}$ with the estimate 
\begin{equation}
\label{est-delta}
|\delta| \leq C_0 a^2 
\end{equation}
for some $a$-independent constant $C_0 > 0$. Moreover, the leading-order part in the expansion $\delta = \delta_2 a^2 + \mathcal{O}(a^4)$ can be found explicitly $$
\delta_2 = -\frac{8}{3} \mu_0^2 \frac{\int_0^1 (1-x^2) v_0^4 \, dx}{\int_0^1 (1-x^2) v_0^2 \, dx}.
$$
Since $\delta_2 < 0$, the nonconstant solution bifurcates for $\mu < \mu_0$ and satisfies 
$$
|\mu^2 - \mu_0^2| \leq C_0 a^2, \qquad 
\| v - a v_0 \|_{L^{\infty}} \leq C_0 |a|^3
$$ 
for some $a$-independent constant $C > 0$. 
Hence $a = \mathcal{O}(\sqrt{\mu_0 - \mu})$ and the solution satisfies 
$\| u - \frac{\pi}{4} \|_{L^{\infty}} \leq C \sqrt{\mu_0 - \mu}$
for some $\mu$-independent constant $C > 0$.
\end{proof}

\begin{remark}
	The sign of $\delta_2$ in the normal form in Lemma \ref{lemma-normal-form} suggests that the domain wall states are global minimizers of the energy $G_{\e}$ in $\mathcal{E}_{s}$ in the region of their existence at least for small $\e > 0$. The proof of non-existence of the domain wall states for $\gamma \in (1,\gamma_0(\e))$ for general $\e \in (0,\e_0)$ is left open for further studies.
\end{remark}

\end{document}